\newtheorem{theorem}{Theorem}[section]
\newtheorem{lemma}[theorem]{Lemma}
\newtheorem{cor}[theorem]{Corollary}
\newtheorem{proposition}[theorem]{Proposition}
\theoremstyle{definition}
\newtheorem{defn}[theorem]{Definition}
\newtheorem{remark}[theorem]{Remark}
\newtheorem*{remarks}{\bf Remarks}
\newtheorem*{theoremno}{Theorem}
\numberwithin{theorem}{section} 
\numberwithin{equation}{section}
\newcommand{\Z}{\mathbb Z}
\newcommand{\N}{\mathbb N}
\newcommand{\R}{\mathbb R}
\newcommand{\C}{\mathbb C}
\newcommand{\Q}{\mathbb Q}
\newcommand{\sm}[4]{\left(\begin{smallmatrix}#1&#2\\ #3&#4 \end{smallmatrix} \right)}
\newcommand{\qs}{Q_{\boldsymbol{\zeta}_{\boldsymbol{n,N}}}}
\newcommand{\qSubgroup}{\Gamma_{\boldsymbol{\zeta}_{\boldsymbol{n,N}}}}
\newcommand{\lcm}{\textnormal{lcm}}
\newcommand{\bs}{\boldsymbol}
\newcommand{\rmaf}{\color{black}}
\begin{document}

\title{Quantum modular forms and singular combinatorial series with repeated roots of unity}
\author{Amanda Folsom} 
\address{Department of Mathematics and Statistics, Amherst College, Amherst, MA 01002}
\email{afolsom@amherst.edu}
\author{Min-Joo Jang}
\address{Department of Mathematics, Room 318, Run Run Shaw Building, 
The University of Hong Kong, 
Pokfulam, Hong Kong}
\email{min-joo.jang@hku.hk}
\author{Sam Kimport}
\address{Department of Mathematics, 450 Serra Mall,
Building 380,
Stanford, CA 94305-2125}
\email{skimport@stanford.edu}
\author{Holly Swisher}
\address{Department of Mathematics, Kidder Hall 368, Oregon State University, Corvallis, OR 97331-4605}
\email{swisherh@math.oregonstate.edu}

\thanks{{\bf{Acknowledgements.}}  Some of the ideas in this paper emerged at Banff International Research Station at the Women in Numbers 4 Workshop, and the authors are thankful for the hospitality and opportunity.  The first author is also grateful for the support of National Science Foundation Grant DMS-1449679, and the Simons Fellows in Mathematics Program.}

\keywords{quantum modular forms, mock modular forms, modular forms, Durfee symbols, combinatorial rank functions, partitions}

\subjclass[2010]{Primary 11P82, 11F37}

\maketitle

\begin{abstract}  In 2007, G.E. Andrews introduced the $(n+1)$-variable combinatorial generating function $R_n(x_1,x_2,\cdots,x_n;q)$  for ranks of $n$-marked Durfee symbols,  an $(n+1)$-dimensional multisum,  as a vast generalization to the ordinary two-variable partition rank generating function.  Since then, it has been a problem of interest to understand the automorphic properties of this function;  in special cases and under suitable specializations of parameters, $R_n$ has been shown to possess modular, quasimodular, and mock modular properties when viewed as a function on the upper half complex plane $\mathbb H$, in work of Bringmann, Folsom, Garvan, Kimport, Mahlburg, and Ono. Quantum modular forms, defined by Zagier in 2010, are similar to modular or mock modular forms but are defined on the rationals $\mathbb Q$ as opposed to $\mathbb H$, and exhibit modular transformations there up to suitably analytic error functions in $\mathbb R$;  in general, they have been related to diverse areas including number theory, topology, and representation theory.  Here,  we establish   quantum modular properties of $R_n$.    
\end{abstract}

\section{Introduction and Statement of results}\label{intro}

\subsection{Background} 

Let $p(n)$ denote the number of partitions of a positive integer   $n$, where a \emph{partition} of $n$ is a non-increasing sequence of positive integers whose sum is $n$.  As an example, we see   there are 5 partitions of 4: $4,\ 3+1,\ 2+2,\ 2+1+1,\ 1+1+1+1$, and therefore $p(4)=5$. The generating function of $p(n)$ is given by 
\[
1+\sum_{n=1}^\infty p(n) q^n=\prod_{k=1}^\infty \frac{1}{1-q^k}=\frac{q^{\frac{1}{24}}}{\eta(\tau)},
\]
where
\begin{align}\label{def_eta} \eta(\tau) := q^{\frac{1}{24}}\prod_{k=1}^\infty (1-q^k) \end{align}
is {\it   Dedekind's $\eta$-function},    a weight $1/2$ modular form.   Here and throughout this section  we are setting $q=e^{2\pi i \tau}$, where $\tau \in \mathbb H:= \{x + i y \ | \ x \in \mathbb R,\  y \in \mathbb R^+\}$, the upper half of the complex plane.  

In order to provide a combinatorial proof of Ramanujan's remarkable partition congruences, Dyson \cite{Dyson} defined the \emph{rank} of a partition  as  the largest part of the partition minus the number of parts. He also defined the \emph{partition rank function} $N(m,n)$ to be the number of partitions of $n$ with rank equal to $m$.  If we set $N(m,0):=\delta_{m0}$ with $\delta_{ij}$ the Kronecker delta, and define the $q$-Pochhammer symbol for $n\in\N_0\cup\{\infty\}$ by $(a)_n=(a;q)_n:=\prod_{j=1}^n (1-aq^{j-1})$, then the generating function for $N(m,n)$ is given  by 
\begin{align}\label{rankgenfn}  \sum_{m=-\infty}^\infty \sum_{n=0}^\infty N(m,n) w^m q^n =   \sum_{n=0}^\infty \frac{q^{n^2}}{(wq;q)_n(w^{-1}q;q)_n} =: R_1(w;q).\end{align} 

Due to the deep connection between the rank generating function and the theory of modular forms, there have been many studies on the $q$-hypergeometric series defined in \eqref{rankgenfn}. For example, when $w=1$, one recovers the partition generating function, namely
\begin{equation}\label{r1mock1} 
R_1(1;q) =  \sum_{n=0}^\infty \frac{q^{n^2}}{(q;q)^2_n}=1+\sum_{n=1}^\infty p(n) q^n=\frac{ q^{\frac{1}{24}}}{\eta(\tau)},
\end{equation}
(essentially\footnote{Here and throughout, as is standard in this subject for simplicity's sake, we may slightly abuse terminology and refer to a function as a modular form or other modular object when in reality it must first be multiplied by a suitable power of $q$ to transform appropriately. }) the reciprocal of the Dedekind $\eta$-function, the modular form of weight $1/2$   defined in \eqref{def_eta}.
When $w=-1$, we have
\begin{equation}\label{r1mock2} 
R_1(-1;q) =   \sum_{n=0}^\infty \frac{q^{n^2}}{(-q;q)_n^2} =: f(q),
\end{equation}
where $f(q)$ is one of Ramanujan's third order mock theta functions \cite{BFOR}.  

Mock theta functions, and more generally mock modular forms and harmonic Maass forms have played central roles in modern number theory. In particular, for several decades after Ramanujan's death in 1920, no one understood how Ramanujan's mock theta functions fit into the theory of modular forms until the groundbreaking 2002 thesis of Zwegers \cite{Zwegers1}: we now know that Ramanujan's mock theta functions, a finite list of curious $q$-hypergeometric functions including $f(q)$,  are examples of \emph{mock modular forms}, the holomorphic parts of \emph{harmonic Maass forms}. In other words, they  exhibit suitable modular transformation properties after they are \emph{completed} by the addition of certain non-holomorphic functions.  Briefly speaking, harmonic Maass forms,   first defined by Bruiner and Funke \cite{BF}, are non-holomorphic generalizations of ordinary modular forms that in addition to satisfying appropriate modular transformations, must be eigenfunctions of a certain weight $k$-Laplacian operator, and satisfy suitable growth conditions in cusps (see \cite{BFOR, BF, OnoCDM, ZagierB} for more).   
 
  Motivated by  the fact  that specializing $R_1$ at $w=\pm 1$ yields two different modular objects, namely an ordinary modular form and a mock modular form as described  in \eqref{r1mock1} and \eqref{r1mock2},  
 Bringmann and Ono \cite{BO}  more generally proved  that upon specialization of the parameter $w$ to  complex roots of unity not equal to $1$, the rank generating function $R_1$  is also a mock modular form.  (See also \cite{ZagierB} for related work.) 
 
 \begin{theoremno}[\cite{BO} Theorem 1.1]  If $0<a<c$, then $$q^{-\frac{\ell_c}{24}}R_1(\zeta_c^a;q^{\ell_c}) + \frac{i \sin\left(\frac{\pi a}{c}\right) \ell_c^{\frac{1}{2}}}{\sqrt{3}} \int_{-\overline{\tau}}^{i\infty} \frac{\Theta\left(\frac{a}{c};\ell_c \rho\right)}{\sqrt{-i(\tau + \rho)}} d\rho $$ is a harmonic Maass form of weight $\frac{1}{2}$ on $\Gamma_c$.  
 \end{theoremno}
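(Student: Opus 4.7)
The plan is to reduce the theorem to the modular properties of Zwegers' $\mu$-function and then specialize the elliptic variables to roots of unity. First I would rewrite $R_1(w;q)$ as an Appell--Lerch-type sum via a classical identity of Watson, namely
\begin{equation*}
R_1(w;q) \;=\; \frac{1-w}{(q;q)_\infty}\sum_{n\in\mathbb Z}\frac{(-1)^n q^{n(3n+1)/2}}{1-wq^n},
\end{equation*}
which, after some rearrangement, expresses $R_1(\zeta_c^a;q^{\ell_c})$ as a product of a weakly holomorphic eta-quotient (of weight $-\tfrac{1}{2}$, so as to cancel the $\eta^{-1}$ with the forthcoming weight-$1$ $\mu$) with a specialization $\mu(u,v;\tau)$ of Zwegers' $\mu$-function at parameters of the form $u,v \in \tfrac{1}{c}\mathbb Z + \tfrac{1}{c}\mathbb Z\,\tau$ (the factor $\ell_c$ being precisely the scalar needed to clear denominators so that the specialization lies in a congruence subgroup of $\mathrm{SL}_2(\mathbb Z)$).

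Next I would appeal to Zwegers' completion: the real-analytic function
\begin{equation*}
\widehat{\mu}(u,v;\tau) \;:=\; \mu(u,v;\tau) + \tfrac{i}{2}R(u-v;\tau),
\end{equation*}
where $R(w;\tau)$ is the Eichler-type period integral attached to the unary theta function $\Theta$, transforms as a genuine (real-analytic) Jacobi form of weight $\tfrac{1}{2}$ under $\mathrm{SL}_2(\mathbb Z)$. Substituting the root-of-unity values for $u,v$ collapses the Jacobi transformation law to a modular transformation law, and the coefficient $\sin(\pi a/c)$ together with the factor $\ell_c^{1/2}/\sqrt{3}$ arise naturally: the sine appears from the explicit evaluation $1-\zeta_c^a$ in the prefactor of the Appell--Lerch sum, the $\ell_c^{1/2}$ from rescaling $\tau \mapsto \ell_c \tau$ in Zwegers' transformation, and the $1/\sqrt{3}$ from the normalization of the theta contribution relative to $\eta^3$. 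The explicit form of $R(w;\tau)$ recovers exactly the integral
\begin{equation*}
\int_{-\overline{\tau}}^{i\infty} \frac{\Theta(a/c;\ell_c \rho)}{\sqrt{-i(\tau+\rho)}}\,d\rho
\end{equation*}
appearing in the statement.

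It remains to verify the two defining properties of a harmonic Maass form other than modularity. Harmonicity, i.e.\ annihilation by the weight-$\tfrac{1}{2}$ Laplacian $\Delta_{1/2}$, follows from a direct computation: differentiating the non-holomorphic integral with respect to $\overline{\tau}$ produces $y^{-1/2}\overline{\Theta(a/c;\ell_c\tau)}$ times a constant, and applying the remaining derivatives gives zero because $\Theta$ is holomorphic. The growth condition at the cusps of $\Gamma_c$ requires examining the Appell--Lerch sum as $\tau$ approaches each rational cusp; the potential singularities $1-\zeta_c^a q^n = 0$ (where $q^n\to \zeta_c^{-a}$) must be shown to be cancelled after the completion.

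I expect the main obstacle to lie not in the harmonicity computation, which is mechanical, but rather in two intertwined technical points: (i) carrying out the explicit specialization of $\widehat{\mu}$ bookkeeping all multiplier systems — the weight-$\tfrac{1}{2}$ multiplier of $\eta^{-1}$ combines with the Jacobi multiplier of $\widehat{\mu}$ and with phase factors introduced by $\zeta_c^a$ under $S:\tau\mapsto -1/\tau$, and these must trivialize precisely on $\Gamma_c$; and (ii) identifying the correct level $\ell_c$ (and hence the group $\Gamma_c$) so that the transformation law closes up. Both are essentially a careful but delicate manipulation of Zwegers' transformation formulas for $\mu$ and $R$ under the generators $S$ and $T$, specialized to $u=v=a/c$.
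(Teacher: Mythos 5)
This statement is quoted in the paper from Bringmann--Ono (\cite{BO}, Theorem 1.1); the paper gives no proof of it, so your attempt can only be measured against the known arguments. Your overall strategy --- pass from the $q$-hypergeometric series to an Appell--Lerch sum via the classical partial-fraction identity, complete \`a la Zwegers, and read off harmonicity from the period-integral form of the non-holomorphic piece --- is a legitimate and by now standard route, essentially the one taken in later expositions such as \cite{BFOR} and the one the present paper uses for the closely related function $A_3$. The harmonicity discussion is fine in outline. (For the record, the original argument in \cite{BO} does not go through $\mu$ at all: Bringmann and Ono derive the transformation laws of the rank generating functions directly, via Mordell integrals and Poisson summation.)

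There are, however, two genuine gaps. First, a level mismatch: the Appell--Lerch sum you write down, $\sum_{n}(-1)^n q^{n(3n+1)/2}/(1-wq^n)$, is a \emph{level three} Appell sum (the exponent is $n(3n+1)/2$, not $n(n+1)/2$), so it is not a specialization of Zwegers' $\mu$-function, whose completion $\widehat{\mu}=\mu+\tfrac{i}{2}R$ you then invoke. To proceed you must either decompose the level-three sum into a finite combination of $\mu$'s plus theta-quotient corrections (Zwegers' multivariable Appell machinery), or work directly with the level-three completion $\widehat{A}_3=A_3+\mathscr{R}_3$ as in \eqref{def_A3hat}--\eqref{AminusDef}, where $\mathscr{R}_3$ is a sum of \emph{three} terms of the form $\vartheta(\cdot\,;3\tau)R(\cdot\,;3\tau)$; only after the specialization $v=-2\tau$, together with the vanishing $\vartheta(0;3\tau)=0$ and the elliptic shifts of $R$, does this collapse to the single period integral of the single unary theta function $\Theta(a/c;\cdot)$ appearing in the statement. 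Your sketch skips this step entirely, and it is precisely where the factor $\sqrt{3}$ and the exact shape of $\Theta$ come from. Second, the two items you defer as ``the main obstacle'' --- the bookkeeping of multipliers under $S$ and $T$ and the determination of $\ell_c$ and hence $\Gamma_c$ --- together with the verification of the growth conditions at \emph{all} cusps (not just $i\infty$) constitute the bulk of the proof rather than peripheral technicalities; as written, the proposal is a plausible plan, not a proof.
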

\noindent Here, $\zeta_c^a := e^{\frac{2\pi ia}{c}}$ is a $c$-th root of unity, $\Theta\left(\frac{a}{c};\ell_c\tau\right)$ is a certain weight $3/2$ cusp form, $\ell_c:=\lcm(2c^2,24)$, and $\Gamma_c$ is a particular subgroup of $\textnormal{SL}_2(\mathbb Z)$. 
   
In this paper we investigate modularity properties for a related combinatorial $q$-hypergeometric series, namely {\it the rank generating function for $n$-marked Durfee symbols}, as defined by Andrews in \cite{Andrews}.  Our results here extend prior work of the authors on this topic \cite{F-K, WIN}. 

We will not give details of the combinatoric objects called $n$-marked Durfee symbols themselves here,  and instead refer the reader to \cite{Andrews} for a full treatment, or \cite{WIN} for a brief overview.  However, we will note that the $n$-marked Durfee symbols are generalizations, using $n$ copies of the integers, of simpler objects called \emph{Durfee symbols}. Durfee symbols represent a partition's Ferrers diagram by indicating the size of the Durfee square, as well as the columns to the right and below the Durfee square.  For example, the Durfee symbol $$ \begin{pmatrix}2&\ \\ 2&1 \end{pmatrix}_3 $$ represents the partition. $4+4+3+2+1$ of $14$.  Andrews  defined   the \emph{rank} of a Durfee symbol to be the number of parts in the top row minus the number in the bottom row, which recovers Dyson's rank of the associated partition when $n=1$.  For the more general $n$-marked Durfee symbols, Andrews defines a notion of rank for each of the $n$ copies of the integers used.

Let $\mathcal{D}_n(m_1,m_2,\dots, m_n;r)$ denote the number of $n$-marked Durfee symbols arising from partitions of $r$ with $j$-th rank equal to $m_j$.  In \cite{Andrews},  Andrews showed that the $( n+1)$-variable generating function for Durfee symbols may be expressed in terms of certain $q$-hypergeometric series, analogous to (\ref{rankgenfn}).  To describe this, for $n\geq 2$, define 
{\small{\begin{align*}
	&R_n({\boldsymbol{x}};q) := \\ & \nonumber \mathop{\sum_{m_1 > 0}}_{m_2,\dots,m_n \geq 0} \!\!\!\!\!\!\!\! \frac{q^{(m_1 + m_2 + \dots + m_n)^2 + (m_1 + \dots + m_{n-1}) + (m_1 + \dots + m_{n-2}) + \dots + m_1}}{(x_1q;q)_{m_1} \!\left(\frac{q}{x_1};q\right)_{m_1} \!\!\!\!(x_2 q^{m_1};q)_{m_2 + 1} \!\!\left(\frac{q^{m_1}}{x_2};q\right)_{m_2+1} \!\!\!\!\!\!\!\!\!\!\cdots(x_n q^{m_1 + \dots + m_{n-1}};q)_{m_n+1} \!\!\left(\!\frac{q^{m_1 + \dots + m_{n-1}}}{x_n};q\!\right)_{\! m_n+1}},\end{align*}}}\noindent where ${\boldsymbol{x}} = {\boldsymbol{x}}_n := (x_1,x_2,\dots,x_n).$ For $n=1$, the function $R_1(x;q)$ is defined as the $q$-hypergeometric series in (\ref{rankgenfn}).  
In what follows, for ease of notation, we may also write $R_1({\boldsymbol{x}};q)$ to denote $R_1(x;q)$, with the understanding that ${\boldsymbol{x}} := x$.  
In \cite{Andrews}, Andrews established the following result, generalizing (\ref{rankgenfn}).  
\begin{theoremno}[\cite{Andrews}  Theorem 10]  For $n\geq 1$ we have that \begin{align}\label{durfgenand1}\sum_{m_1,m_2,\dots,m_n = -\infty}^\infty \sum_{r=0}^\infty \mathcal{D}_n(m_1,m_2,\dots,m_n;r)x_1^{m_1}x_2^{m_2}\cdots x_n^{m_n}q^r = R_n({\boldsymbol{x}};q).\end{align}
\end{theoremno}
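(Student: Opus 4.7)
The plan is to prove the identity by direct enumeration: we compute the $(n+1)$-variable generating function for $n$-marked Durfee symbols stratified by the ``shape data'' of the symbol, and match it term by term with the multisum $R_n(\boldsymbol{x};q)$. The only $q$-series input required is the classical fact that $(yq^a;q)_{b-a+1}^{-1}$ enumerates partitions whose parts lie in the interval $[a,b]$, with $y$ tracking the number of parts and $q$ tracking the sum.

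Following Andrews's combinatorial definition in \cite{Andrews}, an $n$-marked Durfee symbol consists of a Durfee square together with $n$ top strips $\alpha^1,\dots,\alpha^n$ and $n$ bottom strips $\beta^1,\dots,\beta^n$, each a nonincreasing sequence of positive integers, with interlacing conditions that place the entries of $\alpha^j$ and $\beta^j$ in an interval determined by cut-points $1 \le c_1 < c_2 < \cdots < c_n$. The $j$-th rank of the symbol is $|\alpha^j|-|\beta^j|$ (counting entries), so $x_j$ and $x_j^{-1}$ will track entries of $\alpha^j$ and $\beta^j$ respectively. We reparameterize by setting $m_1 := c_1$ and $m_j := c_j - c_{j-1}$ for $2 \le j \le n$; this produces the admissible range $m_1 > 0$, $m_2,\dots,m_n \ge 0$ which matches the outer summation indices in $R_n$, and yields $c_{j-1}=m_1+\cdots+m_{j-1}$ and overall Durfee side $m_1+\cdots+m_n$.

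For a fixed shape $(m_1,\dots,m_n)$, the Durfee square contributes $q^{(m_1+\cdots+m_n)^2}$, while the interlacing/boundary conditions between consecutive strips force mandatory parts whose combined $q$-weight supplies the linear term $\sum_{j=1}^{n-1}(m_1+\cdots+m_j)$. The $2n$ remaining free strips, enumerated by the partition lemma on their prescribed intervals, produce exactly the Pochhammer reciprocals $(x_j q^{m_1+\cdots+m_{j-1}};q)_{m_j+1}^{-1}$ and $(q^{m_1+\cdots+m_{j-1}}/x_j;q)_{m_j+1}^{-1}$ for $j \ge 2$, and $(x_1 q;q)_{m_1}^{-1}$, $(q/x_1;q)_{m_1}^{-1}$ for $j=1$. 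Summing over all admissible $(m_1,\dots,m_n)$ then yields $R_n(\boldsymbol{x};q)$ exactly. The main obstacle will be faithfully transcribing Andrews's boundary-sharing conventions between consecutive strips, which are jointly responsible for the $+1$ offsets appearing in the Pochhammer symbols and for the precise form of the linear term in the $q$-exponent; once these conventions are pinned down, the remainder is routine $q$-series bookkeeping.
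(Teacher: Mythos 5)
The paper does not prove this statement; it is imported verbatim from Andrews \cite{Andrews} (his Theorem 10) as background, so there is no internal proof to compare against. On its own terms, your outline follows the same route as Andrews's original argument: stratify $n$-marked Durfee symbols by the Durfee-square side and the cut-points, reparameterize the cut-points by $m_1>0$, $m_2,\dots,m_n\ge 0$, let the free strips generate the Pochhammer reciprocals via the ``parts in a prescribed interval'' lemma, and let the forced boundary parts supply the linear term $(m_1+\cdots+m_{n-1})+\cdots+m_1$ in the $q$-exponent. That is the correct skeleton.

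As written, however, this is a plan rather than a proof, and the one convention you commit to is the one that needs repair. You take the $j$-th rank to be $|\alpha^j|-|\beta^j|$ for every $j$, but in Andrews's definition the ranks for $1\le j<n$ carry an additive offset (a $-1$), precisely because the interlacing/nonemptiness conditions force a mandatory entry into one of the two $j$-th strips. This is not cosmetic: if every entry of $\alpha^j$ is weighted by $x_j$ and every entry of $\beta^j$ by $x_j^{-1}$, the forced part produces a stray factor $x_j^{\pm 1}$ that does not appear anywhere in $R_n(\boldsymbol{x};q)$; the shifted rank convention is exactly what absorbs it, and it is also tied to why the $j\ge 2$ factors have Pochhammer length $m_j+1$ while the $j=1$ factors have length $m_1$. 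Until the convention is pinned down and one checks that the forced parts contribute $q$-weight $\sum_{j=1}^{n-1}(m_1+\cdots+m_j)$ with no leftover $x_j$-weight, the coefficient of $x_1^{m_1}\cdots x_n^{m_n}q^r$ has not been identified with $\mathcal{D}_n(m_1,\dots,m_n;r)$. You flag this as ``the main obstacle,'' which is accurate, but deferring it leaves the substantive content of the theorem unproved: everything else in the outline genuinely is routine $q$-series bookkeeping.
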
 
When $n=1$, one recovers Dyson's rank,   in that  $\mathcal D_1(m_1;r)=N(m_1,r)$, so  we see  that \eqref{durfgenand1} reduces to \eqref{rankgenfn} in this case.  The mock modularity of the associated two variable generating function $R_1(x ;q)$ was established in \cite{BO} as described in the theorem above.   In \cite{Bri1}, Bringmann showed that $R_2(1,1;q)$ 
is a \emph{quasimock theta function},  and a year later
Bringmann, Garvan, and Mahlburg \cite{BGM} more generally proved that $R_{n}(1,1,\dots,1;q)$ is a quasimock theta function for $n\geq 2$.   Precise statements of these results can be found in \cite{Bri1, BGM}. 

 Two of the authors \cite{F-K}  established the automorphic properties of $R_n\left({\boldsymbol{x}};q\right)$ for more arbitrary parameters ${\boldsymbol{x}} = (x_1,x_2,\dots,x_n)$, thus  treating families of the rank generating functions for $n$-marked Durfee symbols  with additional singularities than those of $R_n(1,1,\dots,1;q)$.   The techniques of Andrews \cite{Andrews} and Bringmann \cite{Bri1} were not directly applicable in this instance  due to the presence of such additional singularities.   These singular combinatorial families are essentially mixed mock and quasimock modular forms.  Using this result, the authors \cite{WIN} established quantum modular properties of $R_n\left({\boldsymbol{x}};q\right)$ with distinct roots of unity $x_1, x_2, \dots, x_n$ as stated in the Theorem in Section 1.3 below. (See \cite{WIN} for more details.)  To precisely state the result from \cite{F-K}, we first introduce some notation, which we also use for the remainder of this paper. 
Namely, we consider functions evaluated at certain  length $n$ vectors ${\boldsymbol{\zeta}_{\boldsymbol{n,N}}} $  of roots of unity defined as follows (as in \cite{F-K}).   

Let $n$ and $N$ be fixed integers satisfying $0\leq N\leq \big\lfloor\frac{n}{2}\big\rfloor$, and $n\geq 2$.  Suppose for $1\leq j \leq n-N$, $\alpha_j \in \mathbb Z$ and $\beta_j \in  \mathbb N$, where $\beta_j \nmid \alpha_j, \beta_j \nmid 2\alpha_j$, and that $\frac{\alpha_{r}}{\beta_{r}} \pm \frac{\alpha_{s}}{\beta_{s}} \not\in\mathbb Z$ if $1\leq r\neq s \leq n-N$. Let
\begin{align}
	{\boldsymbol{\alpha}_{\boldsymbol{n,N}}}&:= \Big(\underbrace{\frac{\alpha_1}{\beta_1},\frac{\alpha_1}{\beta_1},\frac{\alpha_2}{\beta_2},\frac{\alpha_2}{\beta_2},\dots,\frac{\alpha_N}{\beta_N},\frac{\alpha_N}{\beta_N}}_{2N},\underbrace{\frac{\alpha_{N+1}}{\beta_{N+1}},\frac{\alpha_{N+2}}{\beta_{N+2}},\dots,\frac{\alpha_{n-N}}{\beta_{n-N}}}_{n-2N}\Big) \in \mathbb Q^n  \notag\\ 
	\label{zetavec}
	{\boldsymbol{\zeta}_{\boldsymbol{n,N}}} &:=\big(\underbrace{\zeta_{\beta_1}^{\alpha_1},\zeta_{\beta_1}^{\alpha_1},\zeta_{\beta_2}^{\alpha_2},\zeta_{\beta_2}^{\alpha_2},\dots,\zeta_{\beta_N}^{\alpha_N},\zeta_{\beta_N}^{\alpha_N}}_{2N},\underbrace{\zeta_{\beta_{N+1}}^{\alpha_{N+1}},\zeta_{\beta_{N+2}}^{\alpha_{N+2}},\dots,\zeta_{\beta_{n-N}}^{\alpha_{n-N}}}_{n-2N}\big) \in \mathbb C^n.
\end{align}
Here, $\zeta_{\beta}^\alpha = e^{2\pi i \frac{\alpha}{\beta}}$ as before.  
\begin{remark} We point out that the dependence of the vector $\boldsymbol{\zeta_{n,N}}$ on $n$ is reflected only in the length of the vector, and not (necessarily) in the roots of unity that comprise its components.  In particular, the vector components may be chosen to be $m$-th roots of unity for different values of $m$.   
 \end{remark}

\begin{remark}  The conditions  given in \cite{F-K}  do not require that $\gcd(\alpha_j, \beta_j) = 1$.  Instead, they merely require  that $\frac{\alpha_j}{\beta_j} \neq \frac{1}{2}\Z$. Without loss of generality, we will assume here  that $\gcd(\alpha_j, \beta_j) = 1$.  Then, requiring that $\beta_j \nmid 2\alpha_j$ is the same as saying $\beta_j \neq 2$. 
\end{remark}
  
 To complete the function $R_n(\boldsymbol{\zeta}_{\boldsymbol{n,N}};q)$ 
we first define the holomorphic function 
\begin{align*} B_n^+(\boldsymbol{\zeta}_{\boldsymbol{n,N}};q) := R_n(\boldsymbol{\zeta}_{\boldsymbol{n,N}};q) + 
b_n(\boldsymbol{\zeta}_{\boldsymbol{n,N}};q), 
\end{align*} 
where $b_n(\boldsymbol{\zeta}_{\boldsymbol{n,N}}; q)$ is  defined by 
\begin{align*}
	b_n(\boldsymbol{\zeta}_{\boldsymbol{n,N}}; q) := &\, \frac{1}{(q)_\infty} \sum_{j=1}^N \zeta_{2\beta_j}^{-\alpha_j} \frac{\zeta_{\beta_j}^{-\alpha_j}}{2}\left(\frac{3}{\Pi_j(\boldsymbol{\alpha_{\boldsymbol{n,N}}}, 0)} + \frac{\left.\frac{d}{dw}\Pi_j(\boldsymbol{\alpha_{\boldsymbol{n,N}}}, w)\right\vert_{w=0}}{\pi i (\Pi_j(\boldsymbol{\alpha_{\boldsymbol{n,N}}}, 0))^2}\right) A_3\left(\frac{\alpha_j}{\beta_j}, -2\tau; \tau\right)\\
	&- \frac{1}{(q)_\infty} \sum_{j=1}^N \zeta_{2\beta_j}^{-3\alpha_j} \frac{\zeta_{\beta_j}^{-\alpha_j}}{2} \left(\frac{1}{\Pi_j(\boldsymbol{\alpha_{\boldsymbol{n,N}}}, 0)} + \frac{\left.\frac{d}{dw}\Pi_j(\boldsymbol{\alpha_{\boldsymbol{n,N}}}, w)\right\vert_{w=0}}{\pi i (\Pi_j(\boldsymbol{\alpha_{\boldsymbol{n,N}}}, 0))^2}\right) A_3\left(\frac{\alpha_j}{\beta_j}, -2\tau; \tau\right).
\end{align*}
Here, $\Pi_j$ is a constant depending only on $\boldsymbol{\zeta}_{\boldsymbol{n,N}}$ as defined in \cite{F-K}, and $A_3$ is the level three Appell function (see \cite{BFOR} or \cite{Zwegers2})  
\begin{equation}\label{A3def}A_3(u, v; \tau) := e^{3\pi i u} \sum_{n\in\Z} \frac{(-1)^n q^{3n(n+1)/2}e^{2\pi i v}}{1 - e^{2\pi i u}q^n},\end{equation}
where $u, v\in\C$. 
In \cite{Zwegers2}, Zwegers showed that $A_3(u, v;\tau)$ can be completed using the non-holomorphic function $\mathscr R_3$ in (\ref{AminusDef}) to transform like a non-holomorphic Jacobi form.  Using these functions, as in \cite{F-K} we let
\begin{align*}
\widehat{B}_n(\boldsymbol{\zeta}_{\boldsymbol{n,N}};q):= q^{-\frac{1}{24}}(B_n^+(\boldsymbol{\zeta}_{\boldsymbol{n,N}};q)+B_n^-(\boldsymbol{\zeta}_{\boldsymbol{n,N}};q)),  
\end{align*}
where the function $B_n^-$ is given explicitly in terms of sums of functions involving $F_{m,3}^-$ (see \eqref{def_Fmsminus}) and $\mathscr R_3$ (see (\ref{AminusDef})) in \cite[equation (4.3)]{F-K}.  We have the following theorem, established by two of the authors in \cite{F-K}.
 \begin{theoremno}[\cite{F-K} Theorem 1.1] If $n\geq 2$ is an integer, and $N$ is an integer satisfying $0\leq N \leq \left\lfloor \frac{n}{2}\right\rfloor$,  then  $\widehat{B}_n\!\left( {\boldsymbol{\zeta}_{\boldsymbol{n,N}}};q \right) = \widehat{\mathcal{H}}\!\left( {\boldsymbol{\zeta}_{\boldsymbol{n,N}}};q \right) + \widehat{\mathcal A}\!\left( {\boldsymbol{\zeta}_{\boldsymbol{n,N}}};q \right)$, where $\widehat{\mathcal{H}}\!\left( {\boldsymbol{\zeta}_{\boldsymbol{n,N}}};q \right)$  and $\widehat{\mathcal{A}}\!\left( {\boldsymbol{\zeta}_{\boldsymbol{n,N}}};q \right)$ are non-holomorphic modular forms of weights  $3/2$ and $1/2$, respectively, on $\Gamma_{n,N}$, with character $\chi_\gamma^{-1}$. 
\end{theoremno}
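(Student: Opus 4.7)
The plan is to begin from the multisum definition of $R_n({\boldsymbol{\zeta}_{\boldsymbol{n,N}}};q)$ and manipulate it so that the inner sums collapse to (a finite sum of) level three Appell functions $A_3$. Concretely, one first performs partial fractions on each factor $(x_jq^{a_j};q)_{m_j+1}(q^{a_j}/x_j;q)_{m_j+1}$ in the $n$-fold product inside the sum, viewing the expression as a rational function of the $x_j$. After pulling the resulting geometric series through the summations over $m_2,\dots,m_n$ (which telescope via standard $q$-series identities), the remaining sum over $m_1$ takes the shape of Appell--Lerch series; specializing the parameters to the components of ${\boldsymbol{\zeta}_{\boldsymbol{n,N}}}$ and comparing with \eqref{A3def} identifies each residue as a multiple of $A_3\!\left(\tfrac{\alpha_j}{\beta_j},-2\tau;\tau\right)$ divided by $(q)_\infty$.

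The crucial wrinkle is that the first $N$ components of ${\boldsymbol{\zeta}_{\boldsymbol{n,N}}}$ are \emph{repeated}, so the corresponding partial fraction coefficients acquire double poles. Here I would expand each doubled factor as a Laurent series in the repetition parameter $w$ near $w=0$, retaining the principal part (order $w^{-2}$ and $w^{-1}$) separately from the finite part. The $w^{-1}$ coefficient pulls out a derivative $\left.\tfrac{d}{dw}\Pi_j({\boldsymbol{\alpha_{\boldsymbol{n,N}}}},w)\right|_{w=0}$ together with a derivative of $A_3$ in its first argument, while the $w^{-2}$ coefficient contributes the $3/\Pi_j$ term; both combine to exactly match the definition of $b_n({\boldsymbol{\zeta}_{\boldsymbol{n,N}}};q)$. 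Adding $b_n$ to $R_n$ cancels the would-be singular contributions and yields the holomorphic function $B_n^+$. I would then track where each surviving piece originated: terms arising from the Appell function itself will form the weight $1/2$ contribution, while terms arising from the $w$-derivatives (and hence from a $\partial_u$ acting on $A_3$) will form the weight $3/2$ contribution.

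Next I would invoke Zwegers' completion of $A_3$: adding the non-holomorphic function $\mathscr R_3$ from \eqref{AminusDef} produces $\widehat A_3$, which transforms like a (non-holomorphic) Jacobi form of the appropriate weight and index. Subtracting the difference $\widehat{A_3}-A_3$ from $B_n^+$ is exactly the role of $B_n^-$ (via the auxiliary $F_{m,3}^-$ appearing in \cite[eq.~(4.3)]{F-K}), so $\widehat B_n = q^{-1/24}(B_n^+ + B_n^-)$ is the sum of a $q^{-1/24}(q)_\infty^{-1}=\eta(\tau)^{-1}$ times completed Appell contributions plus their $u$-derivatives. Grouping the former into $\widehat{\mathcal A}$ and the latter into $\widehat{\mathcal H}$ gives the desired decomposition, with weights $1/2$ and $3/2$ coming respectively from the eta factor alone versus the eta factor combined with the extra $\partial_u$ that shifts the Jacobi weight by $1$.

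The main obstacles are bookkeeping rather than conceptual. First, tracking the precise multiplier system: each root of unity $\zeta_{\beta_j}^{\alpha_j}$ contributes a component to the character $\chi_\gamma^{-1}$ via the transformation of $A_3\!\left(\tfrac{\alpha_j}{\beta_j},-2\tau;\tau\right)$ under $\mathrm{SL}_2(\mathbb Z)$, and one must identify the common subgroup $\Gamma_{n,N}$ on which every such Appell ingredient, the eta factor, and the derivative terms simultaneously transform with a single consistent multiplier. Second, confirming that the doubled-root Laurent expansion is exactly reproduced by $b_n$ requires a careful residue computation; once that is matched, the rest of the argument is driven by Zwegers' transformation formulas applied term by term.
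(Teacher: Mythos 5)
This statement is quoted verbatim from \cite{F-K} and the present paper contains no proof of it, only the definitions of its ingredients in \S\ref{subsec_compl}; measured against that, your outline --- Andrews-style partial fractions reducing $R_n$ to Appell--Lerch sums, a symmetrized $w\to 0$ limit at the repeated components producing derivative terms, and Zwegers' completion via $\mathscr R_3$ splitting the result into a weight-$1/2$ piece (undifferentiated $\widehat A_3/\eta$) and a weight-$3/2$ piece (the $\partial_u$-differentiated terms) --- is exactly the strategy of \cite{F-K}, as reflected in the functions $F^{\pm}_{m,s}$, $\widehat G_{m,s}$, $\widehat H_{m,s}$ reviewed there. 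One correction of emphasis: $b_n$ is not a counterterm cancelling divergences, since the symmetrized limits defining $F^{+}_{m,s}$ already exist and $R_n(\boldsymbol{\zeta}_{\boldsymbol{n,N}};q)$ is finite; rather it is the additional holomorphic summand coming from the $\widehat G_{j,s}$ contributions (note $(4-s)/\Pi_j(0)$ yields the $3/\Pi_j$ and $1/\Pi_j$ coefficients for $s=1,3$) that must be added to $R_n$ so that the total regroups into the completed objects $\widehat{\mathcal H}+\widehat{\mathcal A}$; also, the character $\chi_\gamma^{-1}$ arises solely from the $\eta^{-1}$ prefactor, with the intersection $\Gamma_{n,N}=\bigcap_j\Gamma_0(2\beta_j^2)\cap\Gamma_1(2\beta_j)$ chosen precisely so that each $\widehat A_3\left(\frac{\alpha_j}{\beta_j},-2\tau;\tau\right)$ transforms with no extra multiplier.
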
  \noindent Here, the functions
  $\widehat{\mathcal{H}}\!\left( {\boldsymbol{\zeta}_{\boldsymbol{n,N}}};q \right)$ and $\widehat{\mathcal{A}}\!\left( {\boldsymbol{\zeta}_{\boldsymbol{n,N}}};q \right)$,  as well as their holomorphic parts $\mathcal{H}\!\left( {\boldsymbol{\zeta}_{\boldsymbol{n,N}}};q \right)$ and $\mathcal{A}\!\left( {\boldsymbol{\zeta}_{\boldsymbol{n,N}}};q \right)$, are defined in (\ref{hatcalh}) and (\ref{hatcala}), respectively. The subgroup $\Gamma_{n,N}\subseteq \textnormal{SL}_2(\mathbb Z)$ under which $\widehat{B}_n({\boldsymbol{\zeta}_{\boldsymbol{n,N}}};q)$ transforms is defined by
	\[
	\Gamma_{n,N}:=\bigcap_{j=1}^{ n-N} \Gamma_0\left(2\beta_j^2\right)\cap \Gamma_1(2\beta_j),
	\] and the Nebentypus character $\chi_\gamma$ is given in Lemma \ref{ETtrans}.

\begin{remark}\label{zagremark} Zagier   defined a \emph{mixed mock modular form} \cite{BFOR, Zlec} to be the product of a mock modular form and a modular form.  Here, the holomorphic parts of $\widehat{B}_n$ consist of linear combinations of mixed mock modular forms, and also terms consisting of derivatives $\frac{d}{du} \phi(u,\tau) \big |_{u=0}$ of mock Jacobi forms $\phi(u,\tau)$ in the Jacobi $u$ variable evaluated at $u$=0, multiplied by modular forms.  For simplicity, we  may still refer to holomorphic parts of $\widehat{B}_n\!\left( {\boldsymbol{\zeta_{n,N}}};q \right)$    as \emph{mixed mock modular forms}.   
\end{remark}

\subsection{Quantum modular forms}  
In this paper, we  extend   results from \cite{WIN}, which establish quantum modular properties for the $(n+1)$-variable rank generating function for $n$-marked Durfee symbols $R_n({\boldsymbol{x}};q)$ with distinct roots of unity $x_1, x_2, \dots, x_n$, by  determining quantum modular properties for $R_n({\boldsymbol{x}};q)$ when there are repeated roots of unity. 

Loosely speaking, a quantum modular form is similar to a mock modular form in that it exhibits a modular-like transformation with respect to the action of a suitable subgroup of $\textnormal{SL}_2(\mathbb Z)$;  however, rather than the upper half-plane $\mathbb H$, the domain of a quantum modular form is the set of rationals $\mathbb Q$ or an appropriate subset.  
The formal definition of a quantum modular form was originally introduced by Zagier in \cite{Zqmf} and has  since   been slightly modified to allow for half-integral weights, subgroups of $\operatorname{SL_2}(\mathbb{Z})$, etc.\ (see \cite{BFOR}).

\begin{defn} \label{qmf}
A weight $k \in \frac{1}{2} \mathbb{Z}$ quantum modular form is a complex-valued function $f$ on $\mathbb{Q}$, such that for all $\gamma = \sm abcd \in \operatorname{SL_2}(\mathbb{Z})$, the functions $h_\gamma: \mathbb{Q} \setminus \gamma^{-1}(i\infty) \rightarrow \mathbb{C}$ defined by  
\begin{equation*}
h_\gamma(x) := f(x)-\varepsilon^{-1}(\gamma) (cx+d)^{-k} f\left(\frac{ax+b}{cx+d}\right)
\end{equation*}
satisfy a ``suitable" property of continuity or analyticity in a subset of $\mathbb{R}$.  
\end{defn}
\begin{remarks} \quad 
\begin{enumerate}
\item The complex numbers $\varepsilon(\gamma)$, which satisfy $|\varepsilon(\gamma)|=1$, are such as those appearing in the theory of half-integral weight modular forms. 
\item We may modify Definition \ref{qmf} appropriately to allow transformations on appropriate subgroups of $\operatorname{SL_2}(\mathbb{Z})$. We may also restrict the domains of the functions $h_\gamma$ to be suitable subsets of $\mathbb{Q}$. 
\end{enumerate}
\end{remarks}

 Since Zagier's initial definition, the subject of quantum modular forms has been widely studied   (see \cite{BFOR} and references therein for a number of examples and applications).  In particular, the notion of a quantum modular form is now known to have a direct connection to Ramanujan's original definition of a mock theta function \cite{BR, FOR}  and more generally to that of   a mock modular form \cite{CLR}.   
 
\subsection{Results} 
 Although automorphic properties of the rank generating function for $n$-marked Durfee symbols $R_n$ in (\ref{durfgenand1}) on $\mathbb H$ have been established by two of the authors (see  \cite[Theorem 1.1]{F-K} above) and $\mathbb Q$ is a natural boundary to $\mathbb H$, a priori there is no reason to expect $R_n$ to converge on $\mathbb Q$, let alone exhibit quantum-automorphic properties there.  However, here (as well as in previous work \cite{WIN}) we do in fact establish quantum-automorphic properties for $R_n$.   

For the remainder of this paper, we use the notation $$\mathcal V_{n,N}(\tau) := \mathcal V({\boldsymbol{\zeta}_{\boldsymbol{n,N}}};q),$$ where $\mathcal V$ may refer to any one of the functions  
 $$\widehat{\mathcal A}, \mathcal A, \widehat{\mathcal H}, \mathcal H, \widehat{B}_n , R_n, b_n, B_n^+, B_n^-. $$ 
  (We  omit repetitive subscripts and write $\mathcal V_{n,N}(\tau)$ for $(\mathcal V_n)_{n,N}(\tau)$ as well.)    Note that when $N=0$ these functions are equal to the ones in \cite{WIN}. Namely, $\mathcal V_{n,0}(\tau)=\mathcal V_n(\tau)$.    
  
In \cite{WIN}, we established the quantum modular properties of $R_n$ in the  special case when $N=0$. More precisely,   we  showed that for $N=0$, $\mathcal A_{n,N}(\tau)   =  q^{-\frac{1}{24}}R_n(\boldsymbol{\zeta}_{\boldsymbol{n,N}};q)$ is a quantum modular form 
under the  action of a subgroup of $\Gamma_{n,0},$ with quantum set 
\begin{equation}\label{qSetDef} \qs := \left\{\frac{h}{k}\in \Q\; \middle\vert\; \begin{aligned} & \ h\in\Z, k\in\N, \gcd(h,k) = 1, \ \beta_j \nmid k\ \forall\ 1\le j\le n,\\&\left\vert \frac{\alpha_j}{\beta_j}k - \left[\frac{\alpha_j}{\beta_j}k\right]\right\vert > \frac{1}{6}\ \forall\ 1\le j\le n\end{aligned} \right\},\end{equation}
where $[x]$ denotes  the closest integer to $x$.  

 \begin{remark} For $x \in \frac12 + \mathbb Z$,  different sources define $[x]$ to mean
either $x-\frac12$ or $x+\frac12$. The definition of $\qs$ involving $[ \cdot ]$ is well-defined for either of these conventions in the case of $x\in \frac12 + \mathbb Z,$ as $\vert x - [x]\vert = \frac{1}{2}$.\end{remark}

 Here, we consider the complementary case of $N>0,$ and ultimately establish  quantum modular properties  for the function $q^{-\frac{1}{24}}B_{n,N}^+$ in this setting.  When $N>0$, one has repeated roots of unity in (\ref{zetavec}).  This leads to additional singularities, rendering the study of the modular properties of $q^{-\frac{1}{24}}B_{n,N}^+$ in the case $N>0$ significantly more complex than in   the  case $N=0$.  Before stating our main result,   we first define  
\begin{equation}\label{def_ell}\ell = \ell(\bs{\zeta_{n,N}}):= \begin{cases} 6\left[\text{lcm}(\beta_1, \dots, \beta_{n})\right]^2 &\text{ if } 3 \nmid \beta_j \text{ for all } 1\leq j \leq n, \\ 2\left[\text{lcm}(\beta_1, \dots, \beta_{n})\right]^2 &\text{ if } 3 \mid \beta_j \text{ for some } 1\leq j \leq n,  \end{cases}\end{equation} and let $S_\ell:=\left(\begin{smallmatrix}1 & 0 \\ \ell & 1 \end{smallmatrix}\right)$, $T:=\left(\begin{smallmatrix}1 & 1 \\ 0 & 1 \end{smallmatrix}\right)$.  We define the group generated by these two matrices as 
\[ 
\qSubgroup:= \langle S_\ell, T \rangle.
\] Moreover,  the constant (a finite product) $\Pi_{j}^\dagger({\boldsymbol{\alpha}_{\boldsymbol{n,N}}})$ is defined explicitly in \cite[(4.2)]{F-K} (where one must replace $n\mapsto j$ and $k\mapsto n$).   Throughout the paper we let $e(x):=e^{2\pi ix}$.

\begin{theoremno}[\cite{WIN} Theorem 1.7]
Let $N=0$. For all $\gamma = \left(\begin{smallmatrix}a & b \\ c & d \end{smallmatrix}\right) \in \qSubgroup$, and $x\in \qs$, \[H_{n,\gamma}(x) := \mathcal{A}_n(x) - \chi_\gamma (c x+ d)^{-\frac12}\mathcal{A}_n(\gamma x) \] 
is defined, and extends to an analytic function in $x$ on $\mathbb{R} - \{\frac{-c}{d}\}$. 
In particular, for the matrix $S_\ell$, 
\begin{multline}\label{eqn_Hsell}
H_{n,S_\ell}(x) = \frac{\sqrt{3}}{2} \sum_{j=1}^{n}\frac{(\zeta_{2\beta_j}^{-3\alpha_j} - \zeta_{2\beta_j}^{-\alpha_j})}{\displaystyle\Pi^\dag_j( {\bs{\alpha_{n,N}}})} e \left(\frac{2\alpha_j}{\beta_j} \right) \left[\sum_\pm \zeta_6^{\pm1}\int_{\frac{1}{\ell}}^{i\infty}\frac{g_{\pm\frac13+\frac12,-\frac{3\alpha_j}{\beta_j}+\frac12}(3\rho)}{\sqrt{-i(\rho+x)}}d\rho \right] \\
+\sum_{j=1}^{n}\frac{(\zeta_{2\beta_j}^{-3\alpha_j} - \zeta_{2\beta_j}^{-\alpha_j})}{\displaystyle\Pi^\dag_j( {\bs{\alpha_{n,N}}})} (\ell x+1)^{-\frac12}\zeta_{24}^{-\ell}\mathcal{E}_1\left(\frac{\alpha_j}{\beta_j},\ell;x\right),
\end{multline}
where  the weight $3/2$ theta functions $g_{a,b}$ are defined in \eqref{def_gab}, and $\mathcal E_1$ is defined in Lemma \ref{lem_Stransform}.
\end{theoremno}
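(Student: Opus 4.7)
The plan is to deduce the result from the non-holomorphic modular completion theorem of \cite{F-K}, passing from $\mathbb H$ down to $\mathbb Q$ by the now-standard ``cut-off Eichler integral'' technology. First I would check the easy inclusion $\qSubgroup \subseteq \Gamma_{n,N}$, so that F-K Theorem 1.1 applies to every $\gamma = \sm abcd \in \qSubgroup$. Writing the weight $1/2$ completion as $\widehat{\mathcal A}_{n,0}(\tau) = \mathcal A_{n,0}(\tau) + \mathcal A_{n,0}^{-}(\tau)$, where $\mathcal A_{n,0}^{-}$ is the non-holomorphic piece built from the $\mathscr R_3$-completion of the Appell function $A_3$, the modular transformation law and a comparison of holomorphic parts (which, on $\mathbb H$, determines its non-holomorphic counterpart uniquely up to a real-analytic error) give
\begin{equation*}
H_{n,\gamma}(\tau) \;=\; \chi_\gamma (c\tau+d)^{-1/2}\,\mathcal A_{n,0}^{-}(\gamma \tau) \;-\; \mathcal A_{n,0}^{-}(\tau), \qquad \tau\in\mathbb H.
\end{equation*}
This identity is the pivot: its left-hand side is a priori holomorphic on $\mathbb H$, and its right-hand side is real-analytic; I would then analyze the right-hand side to push it onto $\mathbb R \setminus \{-d/c\}$.

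The second step is to establish that both sides extend analytically across $\qs \subset \mathbb Q$. For the holomorphic side this is a convergence check on $\mathcal A_{n,0}(h/k)$ at $q = e(h/k)$: the condition $\beta_j \nmid k$ is precisely what prevents the Pochhammer factors $(\zeta_{\beta_j}^{\alpha_j}q;q)_m(\zeta_{\beta_j}^{-\alpha_j}q;q)_m$ in $R_n$ from vanishing at $q = e(h/k)$, while the sharper inequality $\big|\tfrac{\alpha_j}{\beta_j}k - [\tfrac{\alpha_j}{\beta_j}k]\big| > \tfrac16$ is the quantitative margin needed so that, after the Appell-function repackaging from \cite{F-K}, no term in the resulting finite sum hits a pole (the threshold $\tfrac16$ tracks the level-$3$ shifts built into $A_3$). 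For the right-hand side I would use the standard fact that $\mathscr R_3$-type completions are represented as Eichler-type integrals $\int_{-\bar\tau}^{i\infty} g_{a,b}(\rho)/\sqrt{-i(\rho+\tau)}\,d\rho$ against weight $3/2$ theta kernels with rapid decay at $i\infty$; deforming the contour away from the branch point $\rho=-\tau$ shows the integral extends real-analytically in $\tau$ across the real line, with the single exception $\tau = -d/c$ arising from the $(c\tau+d)^{-1/2}$ prefactor.

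For the explicit formula at $\gamma = S_\ell$, I would invoke Zwegers's $\mathrm{SL}_2(\mathbb Z)$-transformation for the completion of $A_3(u,v;\tau)$. The choice of $\ell$ in \eqref{def_ell} is engineered so that $S_\ell$ simultaneously clears the characters attached to each $\beta_j^2$ and to $\eta$, making the transformation on the $A_3$-piece clean. Comparing holomorphic and non-holomorphic parts under $\tau \mapsto \tau/(\ell\tau+1)$ then produces (i) a principal Mordell-type integral which, after the change of variables $\rho \mapsto 3\rho$ and recognition of the theta functions $g_{\pm 1/3+1/2,-3\alpha_j/\beta_j+1/2}$, is precisely the $\sum_{\pm}\zeta_6^{\pm 1}\int_{1/\ell}^{i\infty}$ term in \eqref{eqn_Hsell}, and (ii) a finite-sum correction from the residues/lattice shifts picked up in the transformation, which packages into $\mathcal E_1\!\left(\alpha_j/\beta_j,\ell;x\right)$ with the stated $(\ell x+1)^{-1/2}\zeta_{24}^{-\ell}$ prefactor. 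The sum over $j$ and the coefficients $(\zeta_{2\beta_j}^{-3\alpha_j}-\zeta_{2\beta_j}^{-\alpha_j})/\Pi_j^{\dagger}(\bs{\alpha_{n,N}})$ are inherited from the definition of $\mathcal A_{n,0}$.

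The principal obstacle is the \emph{simultaneous} juggling in paragraph three: tracking the many phases, the $\Pi_j^{\dagger}$-normalizations, and the residues from shifting the summation index in $A_3$ under $S_\ell$ so that they collapse exactly into the stated form, while at the same time verifying (via the contour deformation) that the resulting Eichler integral can be taken along $[1/\ell, i\infty)$ with the right branch of $\sqrt{-i(\rho+x)}$ when $x\in\qs$ is real. A secondary but non-trivial technical point is confirming convergence of $\mathcal A_{n,0}(h/k)$ on $\qs$: although the $q$-hypergeometric form of $R_n$ is manifestly finite once the Pochhammers do not vanish, the Appell-based representation used to realize the modular completion requires the strict $\tfrac16$ margin to avoid a singularity of the summand in $A_3$, and this is where the precise shape of $\qs$ is forced.
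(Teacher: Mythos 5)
Your proposal follows essentially the same route as the paper (and its companion \cite{WIN}, where this theorem is actually proved): apply the completion theorem of \cite{F-K} to reduce the cocycle of $\mathcal A_n$ to that of its non-holomorphic part, verify convergence on $\qs$ via the finite evaluation of $R_n$ at roots of unity, and compute the $S_\ell$-cocycle explicitly by converting the $R$-function/Mordell-integral pieces into Eichler integrals of the theta functions $g_{a,b}$ using Zwegers's transformation formulas. The one imprecision is your reading of the $\tfrac16$-condition: it guarantees $4\sin^2(\pi\alpha_j k/\beta_j)>1$, so that the geometric ratio $((1-x_j^k)(1-x_j^{-k}))^{-1}$ has modulus less than $1$ (hence the series converges and the prefactor $(1-((1-x_j^k)(1-x_j^{-k}))^{-1})^{-1}$ is finite), rather than preventing vanishing denominators of individual terms, which is the role of the condition $\beta_j\nmid k$.
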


As described above,  for the case of $N > 0$, recall that there is an additional holomorphic function   $b_n(\boldsymbol{\zeta}_{\boldsymbol{n,N}}; q)$ which is added to   $R_n(\boldsymbol{\zeta}_{\boldsymbol{n,N}}; q)$  to  obtain a ``modular" object. (See \cite[Theorem 1.1]{F-K} above.)
 For $N\geq 0$, we have the following result which generalizes \cite[Theorem 1.7]{WIN} above. 

\begin{theorem}\label{thm_main_NN}      For any integer $N\geq 0$ we have that 
$$ e^{-\frac{\pi i x}{12}} B^+_{n,N}(x)   = \mathcal H_{n,N}(x)   + \mathcal A_{n,N}(x),$$  
where $\mathcal H_{n,N}$ is a quantum modular form of weight $3/2$, and $\mathcal A_{n,N}$ is a quantum modular form of weight $1/2$, both   defined on the quantum set $Q_{\boldsymbol{\zeta_{n,N}}}$ with respect to the group $\Gamma_{\boldsymbol{\zeta_{n,N}}}$ and with character  $\chi_\gamma^{-1}$.     
That is, for all $\gamma=\sm{a}{b}{c}{d} \in \Gamma_{\boldsymbol{\zeta_{n,N}}}$ and $x\in Q_{\boldsymbol{\zeta_{n,N}}}$, we have that 
\begin{align*}
H_{n,N,\gamma}^{(1)}(x) := \mathcal{A}_{n,N}(x) - \chi_\gamma (c x+ d)^{-\frac12}\mathcal{A}_{n,N}(\gamma x)
 \end{align*} and 
 \begin{align*}
 H_{n,N,\gamma}^{(2)}(x):= \mathcal H_{n,N}(x) -  \chi_\gamma (cx + d)^{-\frac32}\mathcal H_{n,N}(\gamma x). 
\end{align*}
are defined, and extend  to   analytic functions in $x$ on $\mathbb{R} - \{\frac{-c}{d}\}$. 

In particular, for the matrix $S_\ell$, 
$H_{n,N,S_\ell}^{(1)}(x) = H_{n,S_\ell}(x),$ where $H_{n,S_\ell}(x)$ is as in \eqref{eqn_Hsell}, and 
\begin{align*}
H_{n,N,S_\ell}^{(2)}(x) =& 
-\zeta_{24}^{-\ell}(\ell x + 1)^{-\frac32}\Bigg(\sum_{j=1}^N \frac{\zeta_{2\beta_j}^{\alpha_j} - \zeta_{2\beta_j}^{-\alpha_j}}{2\Pi_j(0)}  \\  
&\times\Bigg[ \sqrt{\ell x + 1}\zeta_{24}^\ell\left(\left(\frac{\ell}{2}-3\frac{\alpha_j}{\beta_j}\ell\right)H_{\alpha_j,\beta_j}(x) 
 -\frac{1}{2\pi i}D_{\alpha_j,\beta_j}(x) \right)
+\mathcal E_2\left(\frac{\alpha_j}{\beta_j},\ell;x\right)\Bigg]
\Bigg),
\end{align*}
  where $H_{\alpha,\beta}$ is as in \eqref{Hperiodint}, $D_{\alpha,\beta}$ is defined in  \eqref{def_Dab}, and $\mathcal E_2$ is defined in Proposition \ref{prop_hmintsf}.  \rmaf
\end{theorem}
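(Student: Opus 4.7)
The plan is to leverage the non-holomorphic completion $\widehat{B}_n({\boldsymbol{\zeta}_{\boldsymbol{n,N}}};q) = \widehat{\mathcal H}_{n,N}(\tau) + \widehat{\mathcal A}_{n,N}(\tau)$ from \cite[Theorem 1.1]{F-K}, which transforms as a non-holomorphic modular form of mixed weights $3/2$ and $1/2$ on $\Gamma_{n,N}$ with character $\chi_\gamma^{-1}$, and to pass from modularity on $\mathbb H$ to quantum modularity on $Q_{\boldsymbol{\zeta_{n,N}}}$ by carefully analyzing the non-holomorphic correction terms as $\tau$ approaches the real line. Writing $\widehat{\mathcal H}_{n,N} = \mathcal H_{n,N} + \mathcal H_{n,N}^-$ and $\widehat{\mathcal A}_{n,N} = \mathcal A_{n,N} + \mathcal A_{n,N}^-$, where the superscript $-$ parts are the Eichler-type period integrals arising from the Zwegers completion of the Appell function $A_3$ (via $\mathscr R_3$) and from $F_{m,3}^-$, the modular transformation of $\widehat{B}_n$ translates into
\[
\mathcal V_{n,N}(x) - \chi_\gamma (cx+d)^{-k_{\mathcal V}} \mathcal V_{n,N}(\gamma x) = -\mathcal V^-_{n,N}(x) + \chi_\gamma (cx+d)^{-k_{\mathcal V}} \mathcal V^-_{n,N}(\gamma x),
\]
for $\mathcal V \in \{\mathcal H, \mathcal A\}$ with weights $k_{\mathcal H}=3/2$ and $k_{\mathcal A}=1/2$. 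The strategy is then to show the right-hand side, a priori defined only on $\mathbb H$, extends analytically to $\mathbb R\setminus\{-d/c\}$, and that the left-hand side is well-defined on $Q_{\boldsymbol{\zeta_{n,N}}}$.

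First I would verify that $\mathcal H_{n,N}$ and $\mathcal A_{n,N}$ make sense at each $h/k\in Q_{\boldsymbol{\zeta_{n,N}}}$. The defining conditions $\beta_j\nmid k$ and $|\tfrac{\alpha_j}{\beta_j}k - [\tfrac{\alpha_j}{\beta_j}k]|>\tfrac16$ were designed in \cite{WIN} to kill the poles coming from $(\zeta_{\beta_j}^{\alpha_j}q;q)_n$-type Pochhammer denominators, producing finite ``Laguerre-type'' sums at $q=e^{2\pi i h/k}$; the same holds for the $N>0$ pieces arising from $b_n$, because the constants $\Pi_j(\boldsymbol{\alpha_{n,N}},0)$ and $\frac{d}{dw}\Pi_j|_{w=0}$ depend only on $\boldsymbol{\zeta_{n,N}}$ and the Appell function $A_3(\tfrac{\alpha_j}{\beta_j},-2\tau;\tau)$ converges as a finite sum at such $\tau$.

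Next, for the weight $1/2$ piece, the mock modular part $\mathcal A_{n,N}$ is structurally identical to the $N=0$ case $\mathcal A_n$ of \cite[Theorem 1.7]{WIN}: the additional contribution $b_n$ feeds into the weight $3/2$ piece $\mathcal H_{n,N}$ (the derivative of Appell terms produces Eisenstein-like shadows), not into $\mathcal A_{n,N}$. Hence $H^{(1)}_{n,N,\gamma}(x) = H_{n,\gamma}(x)$ verbatim, giving the claimed identity $H^{(1)}_{n,N,S_\ell}=H_{n,S_\ell}$ and the analytic extension by appeal to \cite[Theorem 1.7]{WIN}. The substantive new work is in the weight $3/2$ cocycle $H^{(2)}_{n,N,\gamma}$. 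Here $\mathcal H^-_{n,N}$ is a sum over $j=1,\dots,N$ of terms of the shape $\frac{\zeta^{\alpha_j}_{2\beta_j}-\zeta^{-\alpha_j}_{2\beta_j}}{2\Pi_j(0)}\mathscr R_3$-derivative integrals (coming from differentiating $A_3(u,v;\tau)$ in $u$ at $u=\tfrac{\alpha_j}{\beta_j}$ and evaluating the Jacobi variable), and the transformation law of $\mathscr R_3$ under $S_\ell$ gives a period integral whose integrand involves the shadow $H_{\alpha_j,\beta_j}$ and its derivative $D_{\alpha_j,\beta_j}$, together with an error term $\mathcal E_2$ capturing the obstruction to the period contour passing through the cusp. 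Combining Lemma \ref{ETtrans} with the Zwegers $S$-transformation of $A_3$ and computing the derivative $\frac{d}{dw}$ explicitly will produce exactly the expression stated.

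The main obstacle is the $S_\ell$-transformation of the derivative of the completed Appell term. Differentiation does not commute cleanly with the $\tau\to -1/\ell^2\tau$ type action because of the multiplier $(\ell\tau+1)^{3/2}$ and the $u$-dependent exponential factors in Zwegers' transformation of $A_3$; extracting the $3/2$-weight piece requires expanding the transformation formula carefully in the Jacobi variable $u$ around $u=0$, isolating the linear-in-$u$ term, and identifying it with the derivative $D_{\alpha_j,\beta_j}$ together with the period integral $H_{\alpha_j,\beta_j}$. The analyticity of $\mathcal E_2$ on $\mathbb R\setminus\{-1/\ell\}$ will follow, as in the $N=0$ case, from the fact that the contour integral $\int_{1/\ell}^{i\infty}$ of a Schwartz-class theta function against $(-i(\rho+x))^{-3/2}$ extends analytically off the singular point. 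Finally, the general $\gamma\in\Gamma_{\boldsymbol{\zeta_{n,N}}}$ case follows from the $S_\ell$ and $T$ computations since $\Gamma_{\boldsymbol{\zeta_{n,N}}}=\langle S_\ell,T\rangle$, with $T$-invariance evident from $q$-expansions.
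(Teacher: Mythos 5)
Your overall strategy is the same as the paper's: start from the completed decomposition $\widehat{B}_{n}=\widehat{\mathcal H}_{n,N}+\widehat{\mathcal A}_{n,N}$ of [F-K, Theorem 1.1], move the non-holomorphic parts to the other side of the cocycle identity, reduce to the generators $T$ and $S_\ell$ of $\qSubgroup$, quote [WIN, Theorem 1.7] for the weight $1/2$ piece, and compute the $S_\ell$-cocycle of $\mathcal H^-_{n,N}$ by differentiating Zwegers' transformation formulas for $R$ and the Mordell integral $h$ in the elliptic variable; this is exactly how the paper produces $H_{\alpha,\beta}$, $D_{\alpha,\beta}$ and $\mathcal E_2$ (Lemmas~\ref{lem_Stransform}--\ref{lem_TvPeriod} and Proposition~\ref{prop_hmintsf}).

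There is, however, one genuine gap: your justification that $\mathcal H_{n,N}(x)$ (equivalently $b_{n,N}$) is defined on $\qs$. You assert that $A_3\bigl(\tfrac{\alpha_j}{\beta_j},-2\tau;\tau\bigr)$ ``converges as a finite sum'' at $\tau=h/k$. It does not: at a root of unity $q=\zeta_k^h$ the terms of the bilateral Lerch sum defining $A_3$ have modulus bounded away from $0$ (the numerators are unimodular and the denominators $1-e^{2\pi iu}q^m$ are periodic in $m$), so the series diverges, and the prefactor $1/(q)_\infty$ is itself singular there. This is precisely why the paper devotes Section~\ref{quantumSet} to the point: Proposition~\ref{AtranslateThm} establishes the translation formula $A_3(u,v+\tau;\tau)=e^{-2\pi iu}A_3(u,v;\tau)+ie^{\pi iu-\pi iv-3\pi i\tau/4}\vartheta(v;3\tau)$, which yields the identity $\frac{1}{(q)_\infty}A_3(u,-2\tau;\tau)=\frac{e^{5\pi iu}}{1-e^{2\pi iu}}R_1(e^{2\pi iu};q)+e^{3\pi iu}$ on $\mathbb H$ (Theorem~\ref{A3conv}); convergence on $\qs$ then follows from the finite evaluation of $R_1$ there ([WIN, Theorem 3.2]), giving Corollaries~\ref{bkDefined} and \ref{HAconv}. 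Without this identity the claim that $H^{(2)}_{n,N,\gamma}$ is \emph{defined} on the quantum set, which is part of the statement being proved, is unsupported. A smaller omission of the same flavor: the definition of $D_{\alpha,\beta}$ requires interchanging $\frac{d}{du}$ with the period integral, which the paper justifies with the growth estimate in Proposition~\ref{prop_analyH}; your sketch takes this for granted.
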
  
 \begin{remark}  Our results reveal that $e^{-\frac{\pi i x}{12}} B^+_{n,N}(x)$ is a  mixed weight  quantum modular form.  From this one also obtains the analytic nature of $H_{n,N,\gamma}^{(1)}(x)(cx+d)^{-1} + H_{n,N,\gamma}^{(2)}(x),$   which showcases the transformation of $R_{n,N}(x)$. \end{remark} 
 \begin{remark}  By combining the explicit closed-form evaluation of the function $R_n({\boldsymbol{\zeta_{n,N}}};\zeta_k^h)$ as a rational polynomial in roots of unity given in Section \ref{quantumSet} with the quantum modular transformations from Theorem \ref{thm_main_NN}, we obtain explicit evaluations of Eichler integrals of (derivatives of) modular forms.   Similar corollaries have been explicitly established in \cite{FGKST, FKTY}.
\end{remark}
 
\section{Preliminaries}\label{prelim}  
\subsection{Modular, mock modular and Jacobi forms}
 The Dedekind $\eta$-function, defined in (\ref{def_eta}), is a well-known modular form of weight $1/2$.  It transforms with character
 $\chi_\gamma$ (see \cite[Ch. 4, Thm. 2]{Knopp}):
\[
\chi_\gamma =
\left\{ \begin{array}{ll}
\big(\frac{d}{|c|} \big)e\left(\frac{1}{24}\left( (a+d)c - bd(c^2-1) - 3c \right)\right)
& \mbox{ if } c \equiv 1 \pmod{2}, \\				
\big( \frac{c}{d} \big) e\left(\frac{1}{24}\left( (a+d)c - bd(c^2-1) + 3d - 3 - 3cd \right)\right)
& \mbox{ if } d\equiv 1\pmod{2},
\end{array}\right.
\]
where $\gamma  = \sm{a}{b}{c}{d} \in \textnormal{SL}_2(\mathbb Z)$, and $\big(\frac{\alpha}{\beta}\big)$ is the generalized Legendre symbol.
Precisely, $\eta$ satisfies the following transformation property \cite{Rad}.
\begin{lemma}\label{ETtrans}
 For
$\gamma=\sm{a}{b}{c}{d} \in \textnormal{SL}_2(\mathbb Z)$, we have that
$
\eta\left(\gamma\tau\right)  = \chi_\gamma(c\tau + d)^{\frac{1}{2}} \eta(\tau).
$
 \end{lemma}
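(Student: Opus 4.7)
The plan is to reduce the transformation law for $\eta$ on a general $\gamma \in \mathrm{SL}_2(\mathbb Z)$ to two elementary transformations under the generators $T = \sm{1}{1}{0}{1}$ and $S = \sm{0}{-1}{1}{0}$, and then obtain the explicit character $\chi_\gamma$ via a cocycle/Dedekind-sum computation. First I would establish that some multiplier system must exist by noting $\eta(\tau)^{24} = \Delta(\tau)$, the weight-$12$ cusp form for $\mathrm{SL}_2(\mathbb Z)$, whose modular transformation is standard; dividing a suitable branch of $\Delta(\gamma\tau)^{1/24}$ by $(c\tau+d)^{1/2}\eta(\tau)$ yields a $24$-th root of unity $\varepsilon(\gamma)$, so that
\[
\eta(\gamma\tau) = \varepsilon(\gamma)(c\tau+d)^{1/2}\eta(\tau)
\]
holds with $|\varepsilon(\gamma)|=1$. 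The task is then reduced to identifying $\varepsilon(\gamma)$ with the explicit $\chi_\gamma$ stated in the lemma.

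Next I would verify the formula on the generators. The $T$-case $\eta(\tau+1) = e^{\pi i/12}\eta(\tau)$ follows immediately from the infinite-product definition \eqref{def_eta}, since $q^{1/24}$ picks up a factor $e^{\pi i/12}$ and each factor $(1-q^k)$ is invariant. The $S$-case
\[
\eta(-1/\tau) = \sqrt{-i\tau}\,\eta(\tau)
\]
is the classical Jacobi functional equation; I would derive it by applying Poisson summation to the theta-like series coming from $\log \eta$, or equivalently by specializing the Jacobi triple product and invoking the transformation of the standard Jacobi theta function $\vartheta_3$. One then checks by direct substitution that the right-hand side of the lemma agrees with $e^{\pi i/12}$ and $\sqrt{-i\tau}$ on $T$ and $S$ respectively.

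With the transformation verified on the generators, I would extend to general $\gamma$ by writing $\gamma$ as a word in $S$ and $T$ and tracking the resulting product of multipliers. The map $\gamma \mapsto \varepsilon(\gamma)$ is a multiplier system (satisfies the cocycle condition dictated by the weight-$1/2$ automorphy factor $(c\tau+d)^{1/2}$), so it is determined by its values on $S$ and $T$. The Rademacher formulation of $\chi_\gamma$ partitioned into the two cases $c$ odd and $d$ odd is exactly the closed-form solution of this cocycle equation; the equivalence of the two branches on their overlap follows from a short manipulation with the Jacobi symbol reciprocity law together with the quadratic residue identities used to rewrite the Dedekind sum $s(d,c)$ as the $24$-th root of unity above.

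The main obstacle is the final identification step: verifying that the cocycle produced by composing $S$ and $T$ words agrees with the explicit Jacobi-symbol formula requires careful bookkeeping of the branch of the square root $(c\tau+d)^{1/2}$ and of $24$-th roots of unity, as well as switching between the $c$-odd and $d$-odd expressions to cover all $\gamma \in \mathrm{SL}_2(\mathbb Z)$. I would defer the detailed calculation to Knopp \cite[Ch.~4, Thm.~2]{Knopp} or Rademacher \cite{Rad}, confirming that their formula matches the $\chi_\gamma$ displayed in the statement.
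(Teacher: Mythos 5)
Your outline is the standard classical argument, and it is sound; the paper itself offers no proof of this lemma, simply quoting the transformation law and the explicit character $\chi_\gamma$ from Knopp \cite[Ch.~4, Thm.~2]{Knopp} and Rademacher \cite{Rad} --- precisely the sources to which you defer the final cocycle bookkeeping. So your proposal is consistent with (and fills in more detail than) the paper's treatment, which is a bare citation.
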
  
   
We require two additional ``modular" objects, namely the Jacobi theta function $\vartheta(u;\tau)$, an ordinary Jacobi form, and a non-holomorphic function $R(u;\tau)$ used by Zwegers in \cite{Zwegers1}.    In what follows, we will also need certain transformation properties of these functions.

\begin{proposition} \label{thetaTransform} For $u \in\C$ and $\tau\in\mathbb{H}$, define
\begin{equation}\label{thetaDef}\vartheta(u;\tau) := \sum_{\nu\in\frac{1}{2} + \Z} e^{\pi i \nu^2\tau + 2\pi i \nu\left(u + \frac{1}{2}\right)}.\end{equation}
Then $\vartheta$ satisfies
\begin{enumerate}
	\item $\vartheta(u+1; \tau) = -\vartheta(u; \tau),$\\
	\item $\vartheta(u + \tau; \tau) = -e^{-\pi i \tau - 2\pi i u}\vartheta(u; \tau),$\\
	\item $\displaystyle \vartheta(u; \tau) = - i e^{\pi i \tau/4}e^{-\pi i u} \prod_{m=1}^\infty (1-e^{2\pi i m\tau})(1-e^{2\pi i u}e^{2\pi i \tau(m-1)})(1 - e^{-2\pi i u}e^{2\pi i m\tau}).$
\end{enumerate}
\end{proposition}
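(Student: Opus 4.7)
\medskip

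\noindent\textbf{Proof plan for Proposition \ref{thetaTransform}.}

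The plan is to prove (1) and (2) directly from the series definition in \eqref{thetaDef} by elementary manipulations of the summand, and then deduce (3) by the classical zero-and-transformation argument: verify that the product on the right-hand side satisfies the same quasi-periodicity laws as $\vartheta$, form the ratio, and show it is an entire elliptic function, hence constant, with the value of the constant pinned down at a convenient point.

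For (1), I would substitute $u+1$ for $u$ in \eqref{thetaDef}. Each summand acquires an extra factor $e^{2\pi i \nu}$; since $\nu \in \tfrac{1}{2}+\mathbb{Z}$, this factor equals $-1$ uniformly in $\nu$, so it pulls out of the sum and yields the identity. For (2), substituting $u+\tau$ for $u$ produces an extra factor $e^{2\pi i \nu \tau}$ in each summand, which combines with the existing $e^{\pi i \nu^2 \tau}$ to give $e^{\pi i(\nu+1)^2\tau - \pi i \tau}$. Writing $\nu = \mu - 1$ with $\mu \in \tfrac{1}{2}+\mathbb{Z}$ and noting that $2\pi i(\mu-1)(u+\tfrac{1}{2}) = 2\pi i \mu(u+\tfrac12) - 2\pi i u - \pi i$, the shifted sum becomes $e^{-\pi i \tau - 2\pi i u} \cdot e^{-\pi i} \cdot \vartheta(u;\tau) = -e^{-\pi i \tau - 2\pi i u}\vartheta(u;\tau)$, as claimed. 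The key trick for (2) is simply completing the square in the exponent and re-indexing.

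For (3), let $P(u;\tau)$ denote the infinite product on the right-hand side of (3) without the prefactor $-i e^{\pi i \tau/4}e^{-\pi i u}$, and let $\Phi(u;\tau)$ denote the full right-hand side. First I would check absolute and uniform convergence on compacta (since $|e^{2\pi i \tau}|<1$) so that $\Phi(u;\tau)$ is an entire function of $u$. Next, a direct computation using the product formula shows $\Phi(u+1;\tau) = -\Phi(u;\tau)$ and $\Phi(u+\tau;\tau) = -e^{-\pi i \tau - 2\pi i u}\Phi(u;\tau)$: the factor $(1-e^{2\pi i u}e^{2\pi i \tau(m-1)})$ shifts under $u\mapsto u+\tau$ into the next term of one product and changes the other product only in its $m=1$ factor, producing the required prefactor after combining with $e^{-\pi i u}$. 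Thus $F(u;\tau) := \vartheta(u;\tau)/\Phi(u;\tau)$ satisfies $F(u+1;\tau) = F(u;\tau)$ and $F(u+\tau;\tau) = F(u;\tau)$ by (1), (2), and the just-verified transformations.

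Finally, one checks that the zeros of $\vartheta(u;\tau)$ and of $\Phi(u;\tau)$ coincide with their multiplicities: both vanish precisely on the lattice $\tfrac{1}{2} + \tfrac{\tau}{2} + \mathbb{Z} + \tau\mathbb{Z}$, simply. For $\Phi$ this is immediate from inspecting the product; for $\vartheta$ it follows by the quasi-periodicities (1), (2) together with the fact that $\vartheta(\tfrac{1}{2}+\tfrac{\tau}{2};\tau)=0$, which is an easy pairing argument on the series. Consequently $F(u;\tau)$ is entire in $u$ and doubly periodic with periods $1,\tau$, hence constant in $u$ by Liouville's theorem. To evaluate the constant I would specialize to a convenient value, for example $u = \tfrac{1}{2}$: the series becomes $\sum_{\nu \in \tfrac{1}{2}+\mathbb{Z}} (-1)^{\nu-\tfrac12}e^{\pi i \nu^2 \tau}\cdot(-1)$ (up to signs traceable to $e^{2\pi i \nu}$), while the product at $u=\tfrac12$ telescopes via Euler's identity into the same theta-null. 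Matching the two yields constant $=1$, which proves (3). The main obstacle is the bookkeeping in the zero-counting and in the specialization that fixes the constant; everything else is either formal series shifting or an appeal to Liouville.
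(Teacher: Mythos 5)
The paper gives no proof of this proposition: it is quoted as a classical fact (these are the standard transformation laws and the Jacobi triple product for Zwegers' normalization of $\vartheta$, taken from \cite{Zwegers1}). So the comparison here is against the classical argument rather than anything in the paper. Your treatment of (1) and (2) is correct and is exactly the standard computation: the uniform factor $e^{2\pi i\nu}=-1$ for $\nu\in\tfrac12+\Z$ gives (1), and completing the square plus re-indexing $\nu\mapsto\nu+1$ gives (2). Your strategy for (3) --- match quasi-periodicities and zeros, apply Liouville, pin the constant --- is also the classical route.

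There are, however, two concrete problems in your execution of (3). First, the zero set is wrong for this normalization. Because of the $(u+\tfrac12)$ shift in the exponent of \eqref{thetaDef}, one has $\vartheta(0;\tau)=\sum_{\nu}e^{\pi i\nu^{2}\tau}\cdot 2\cos(\pi\nu)=0$ (pairing $\nu$ with $-\nu$), while $\vartheta\bigl(\tfrac12+\tfrac\tau2;\tau\bigr)=-e^{-\pi i\tau/4}\sum_{n\in\Z}e^{\pi i n^{2}\tau}\neq 0$; likewise the product visibly vanishes exactly when $e^{2\pi iu}=e^{-2\pi i(m-1)\tau}$ or $e^{-2\pi iu}=e^{-2\pi im\tau}$, i.e.\ on $\Z+\tau\Z$. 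So both sides vanish simply on the lattice $\Z+\tau\Z$, not on $\tfrac12+\tfrac\tau2+\Z+\tau\Z$ as you assert. The Liouville argument survives once this is corrected, but as written your zero-matching step is false. Second, the determination of the constant at $u=\tfrac12$ does not close: there the product reduces (after Euler's identity) to $-2q^{1/8}\prod_m(1-q^{2m})(1+q^{m})$ while the series gives $-2q^{1/8}\sum_{n\ge0}q^{n(n+1)/2}$, and equating these is Gauss's identity, which is itself a specialization of the triple product --- so the step is circular unless you prove that identity independently. The standard repair is to specialize instead at $u=\tfrac14$, observe that the resulting relation expresses the constant $c(\tau)$ in terms of $c(4\tau)$, iterate to get $c(\tau)=c(4^{k}\tau)$, and let $k\to\infty$ so that both series and product tend to their leading terms, forcing $c\equiv1$. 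With those two repairs your plan becomes a complete and correct proof.
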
  
The non-holomorphic function $R(u;\tau)$   is defined in \cite{Zwegers1} by
\[
R(u;\tau):=\sum_{\nu\in\frac12+\Z} \left\{\operatorname{sgn}(\nu)-E\left(\left(\nu+\frac{\operatorname{Im}(u)}{\operatorname{Im}(\tau)}\right)\sqrt{2\operatorname{Im}(\tau)}\right)\right\}(-1)^{\nu-\frac12}e^{-\pi i\nu^2\tau-2\pi i\nu u},
\]
where
\[
E(z):=2\int_0^ze^{-\pi t^2}dt.
\] The function $R$ transforms like a (non-holomorphic) mock Jacobi form as follows.  
\begin{proposition}[Propositions 1.9 and 1.10, \cite{Zwegers1}]\label{Rtransform} The function $R$ satsifies the following   transformation properties:
\begin{enumerate}
	\item $R(u+1;\tau) = -R(u; \tau),$\\
	\item $R(u; \tau) + e^{-2\pi i u - \pi i\tau}R(u+\tau; \tau) = 2e^{-\pi i u - \pi i \tau/4}$,\\
	\item  $R(u;\tau) = R(-u;\tau)$,   \\ 
        \item $R(u;\tau+1)=e^{-\frac{\pi i}{4}} R(u;\tau)$,\\
        \item $\frac{1}{\sqrt{-i\tau}} e^{\pi i u^2/\tau} R\left(\frac{u}{\tau};-\frac{1}{\tau}\right)+R(u;\tau)=h(u;\tau),$
where the Mordell integral is defined by \begin{align}\label{def_hmordell} h(u;\tau):=\int_\R \frac{e^{\pi i\tau t^2-2\pi ut}}{\cosh \pi t} dt. \end{align} 
 
\end{enumerate}
\end{proposition}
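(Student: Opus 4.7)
The five transformation formulas for $R(u;\tau)$ split cleanly into ``algebraic'' identities (1), (3), (4), the quasi-periodicity identity (2), and the genuinely modular statement (5). My plan is to attack (1), (3), (4) by direct manipulation of the defining series, handle (2) by a shift-and-telescope, and reserve the bulk of the work for (5), which is where the real content lies.

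For (1), substituting $u \mapsto u+1$ leaves $\operatorname{Im}(u)$ invariant, so the $\operatorname{sgn}$ and $E$ terms inside the braces are unchanged; the only effect is the factor $e^{-2\pi i \nu}$, which equals $-1$ uniformly for $\nu \in \tfrac12+\Z$. For (4), shifting $\tau \mapsto \tau+1$ multiplies each summand by $e^{-\pi i \nu^2}$; since $\nu^2 \equiv \tfrac14 \pmod 2$ for $\nu \in \tfrac12+\Z$, this is the constant $e^{-\pi i/4}$ and factors out. For (3), changing the summation index $\nu \mapsto -\nu$ in $R(-u;\tau)$ and using that $\operatorname{sgn}$ and $E$ are odd, together with the identity $(-1)^{-\nu-1/2} = -(-1)^{\nu-1/2}$ for $\nu \in \tfrac12 + \Z$, produces three sign changes that combine to give back $R(u;\tau)$.

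For (2), I would substitute $\nu \mapsto \mu-1$ in $R(u+\tau;\tau)$, since $\operatorname{Im}(u+\tau)/\operatorname{Im}(\tau) = \operatorname{Im}(u)/\operatorname{Im}(\tau) + 1$ makes $\mu = \nu+1$ the natural variable to align the arguments of $\operatorname{sgn}$ and $E$ with those appearing in $R(u;\tau)$. The exponential prefactor $e^{-2\pi i u -\pi i \tau}$ exactly absorbs the quadratic-in-$\mu$ and shift terms created by the reindexing, producing a sum with kernel $(-1)^{\mu-1/2}e^{-\pi i \mu^2 \tau - 2\pi i \mu u}$. Adding $R(u;\tau)$, the $E$-terms cancel pairwise and one is left with $\sum_\mu (\operatorname{sgn}(\mu) - \operatorname{sgn}(\mu-1))(-1)^{\mu-1/2} e^{-\pi i \mu^2 \tau - 2\pi i \mu u}$. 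The difference $\operatorname{sgn}(\mu)-\operatorname{sgn}(\mu-1)$ is nonzero only at $\mu=\tfrac12$ (where it equals $2$), and this single term reproduces the right-hand side $2e^{-\pi i u - \pi i \tau/4}$.

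The main obstacle is (5), the $S$-transformation. My strategy is to exploit the fact that the Mordell integral $h(u;\tau)$ is holomorphic in $u$, while $R(u;\tau)$ depends on $u$ non-holomorphically only through the factor $E((\nu + \operatorname{Im}(u)/\operatorname{Im}(\tau))\sqrt{2\operatorname{Im}(\tau)})$. First I would compute $\partial_{\bar u}$ of the left-hand side: differentiating the error function produces a Gaussian kernel in $\nu$, and I expect the contributions from $R(u;\tau)$ and from $(-i\tau)^{-1/2} e^{\pi i u^2/\tau} R(u/\tau; -1/\tau)$ to cancel after applying the chain rule through $u \mapsto u/\tau$ and the identity $\operatorname{Im}(u/\tau) = \operatorname{Im}(u\bar\tau/|\tau|^2)$. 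Once holomorphicity in $u$ is established, I would match the left-hand side with $h(u;\tau)$ by comparing Fourier expansions in $u$: rewrite $R(u/\tau;-1/\tau)$ via Poisson summation in $\nu$ to convert its discrete sum into an integral against a Gaussian, and recognize the resulting integrand as the kernel of the Mordell integral. The bookkeeping of the error function together with the measure change under $\tau \mapsto -1/\tau$ is delicate — this is the step where keeping signs, branches of $\sqrt{-i\tau}$, and the $\operatorname{sgn}$-function consistent will be the principal technical hurdle.
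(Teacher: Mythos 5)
First, note that the paper does not prove this proposition at all: it is quoted with attribution from Zwegers's thesis (Propositions 1.9 and 1.10 of \cite{Zwegers1}), so your attempt can only be measured against Zwegers's original arguments. Your proofs of (1), (3), (4), and the shift-and-telescope argument for (2) are correct and essentially complete: the substitutions $u\mapsto u+1$, $\tau\mapsto\tau+1$, $\nu\mapsto-\nu$, and $\nu\mapsto\mu-1$ do exactly what you say, and in (2) the observation that $\operatorname{sgn}(\mu)-\operatorname{sgn}(\mu-1)$ survives only at $\mu=\tfrac12$, contributing $2e^{-\pi i u-\pi i\tau/4}$, is the whole point. (A trivial bookkeeping remark on (3): the oddness of $\operatorname{sgn}$ and $E$ together flip the sign of the brace once, and $(-1)^{-\nu-\frac12}=-(-1)^{\nu-\frac12}$ flips it a second time, so the net count is two sign changes, not three; your conclusion is nevertheless the right one.)

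The gap is in (5), which you correctly identify as the substantive statement but do not actually prove. Your first step --- showing that $\partial_{\bar u}$ of the left-hand side vanishes because the antiholomorphic derivative of $R$ is an explicit Gaussian theta term whose two contributions cancel under $\tau\mapsto-1/\tau$ --- is sound and is indeed how Zwegers begins. The proposed second step fails as stated: you cannot ``match Fourier expansions in $u$,'' because neither $h(u;\tau)$ nor the left-hand side is periodic in $u$, so no such expansion exists; and Poisson summation applied to $R(u/\tau;-1/\tau)$ returns another error-function--weighted theta series of the same shape rather than the kernel $e^{\pi i\tau t^2-2\pi u t}/\cosh\pi t$. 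The standard way to finish (and Zwegers's) is a uniqueness argument in which the shift equations replace periodicity: using your parts (1) and (2) applied to both copies of $R$, together with the elementary behavior of the prefactor $(-i\tau)^{-1/2}e^{\pi i u^2/\tau}$, one checks that the (now holomorphic) left-hand side satisfies $f(u)+f(u+1)=\frac{2}{\sqrt{-i\tau}}e^{\pi i(u+\frac12)^2/\tau}$ and $f(u)+e^{-2\pi i u-\pi i\tau}f(u+\tau)=2e^{-\pi i u-\pi i\tau/4}$, the same pair satisfied by $h$ (Theorem \ref{thm_Zh} (1), (2)). The difference $d$ of two holomorphic solutions then satisfies $d(u+1)=-d(u)$ and $d(u+\tau)=-e^{2\pi i u+\pi i\tau}d(u)$, so $d(u)\vartheta(u;\tau)$ is an entire elliptic function, hence constant, and it vanishes at $u=0$ because $\vartheta$ does; thus $d\equiv 0$ and (5) follows. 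Without this closing step (or an honestly executed integral-representation computation), part (5) remains unproved.
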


 Using the functions $\vartheta$ and $R$, Zwegers  defined the completion of $A_3(u, v; \tau)$ (see \eqref{A3def})  by 
\begin{align}\label{def_A3hat} \widehat{A}_3(u, v; \tau) := A_3(u, v;\tau) + \mathscr R_3(u, v;\tau),\end{align}
where
\begin{align}\label{AminusDef}
	\mathscr R_3 (u, v;\tau) :=& \frac{i}{2} \sum_{j=0}^{2} e^{2\pi i j u} \vartheta(v + j\tau + 1; 3\tau) R(3u - v - j\tau - 1; 3\tau)\\
	=& \frac{i}{2} \sum_{j=0}^{2} e^{2\pi i j u} \vartheta(v + j\tau; 3\tau) R(3u - v - j\tau; 3\tau),\nonumber
\end{align}
where the equality of the two expressions in \eqref{AminusDef} is justified by Proposition \ref{thetaTransform} and Proposition \ref{Rtransform}.   This completed  function transforms like a (non-holmorphic) Jacobi form, and in particular satisfies  the following elliptic transformation.  

\begin{theorem}[{\cite[Theorem 2.2]{Zwegers2}}]\label{completeAtransform} For $n_1, n_2, m_1, m_2\in\Z$, the completed level $3$ Appell function $\widehat{A}_3$ satisfies
\[\widehat{A}_3(u + n_1\tau + m_1, v + n_2\tau + m_2; \tau) = (-1)^{n_1 + m_1}e^{2\pi i (u(3n_1 - n_2) - vn_1)}q^{3n_1^2/2 - n_1n_2}\widehat{A}_3(u, v; \tau).\]
\end{theorem}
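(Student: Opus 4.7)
The plan is to reduce this four-parameter elliptic identity to its four elementary generators and then verify each case by matching the anomalies carried by $A_3$ and $\mathscr R_3$ separately. The multiplier on the right-hand side — a sign, an exponential linear in $u,v$, and a power of $q$ — is a cocycle in the translation parameters $(n_1, m_1, n_2, m_2)$: stacking two successive shifts produces the multiplier predicted by the sum. Hence it suffices to verify the identity for each of the four elementary shifts $u \to u+1$, $v \to v+1$, $u \to u+\tau$, and $v \to v+\tau$, and then iterate.

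I would dispose of the two shifts by $1$ first, as they are essentially formal. Under $u \to u+1$, the prefactor $e^{3\pi i u}$ in \eqref{A3def} gives $A_3(u+1,v;\tau) = -A_3(u,v;\tau)$ directly; three applications of Proposition \ref{Rtransform}(1) to the argument $3u-v-j\tau-1$ of $R$ in \eqref{AminusDef} produce the same sign in each of the three summands of $\mathscr R_3$, while $e^{2\pi i j u}$ is invariant under $u \to u+1$. For $v \to v+1$, each term of the series \eqref{A3def} carries $e^{2\pi i n v}$ with $n \in \Z$, so $A_3$ is invariant; in $\mathscr R_3$ the signs from Propositions \ref{thetaTransform}(1) applied to $\vartheta$ and Proposition \ref{Rtransform}(1) applied to $R$ cancel against each other summand-by-summand. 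In both cases the resulting transformation of $\widehat A_3$ agrees with the claimed cocycle for the appropriate elementary vector.

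The substantive content lies in the two shifts by $\tau$. For $u \to u+\tau$, the strategy is to reindex $n \to n-1$ inside the series defining $A_3(u+\tau,v;\tau)$ to restore the denominator $1 - e^{2\pi i u}q^n$; this reindexing extracts the predicted multiplier $-e^{6\pi i u - 2\pi i v}q^{3/2}$, plus a residue-type boundary term originating from the shift of the pole locus. The parallel computation for $\mathscr R_3(u+\tau,v;\tau)$ applies Proposition \ref{Rtransform}(2) to $R(3(u+\tau)-v-j\tau;3\tau)$, trading it for $R(3u-v-j\tau;3\tau)$ plus a theta-valued correction, followed by a cyclic reindexing of $j$ within $\{0,1,2\}$; the leftover piece is collapsed via the infinite product expansion of $\vartheta$ in Proposition \ref{thetaTransform}(3), and one verifies directly that it is the negative of the $A_3$-anomaly. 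The analogous argument for $v \to v+\tau$ uses Proposition \ref{thetaTransform}(2) on the $\vartheta$-factor in each summand in place of Proposition \ref{Rtransform}(2).

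The main obstacle will be the delicate bookkeeping of the $u \to u + \tau$ case. Because $\mathscr R_3$ is a finite sum over $j \in \{0,1,2\}$ rather than an infinite sum over $\Z$, reindexing $j$ by $1$ does not simply telescope but leaves a genuine boundary contribution, which must cancel the $A_3$-anomaly \emph{exactly}, not merely up to a holomorphic modular form. Verifying this requires combining parts (1)–(2) of Propositions \ref{thetaTransform} and \ref{Rtransform} with the Jacobi triple product of Proposition \ref{thetaTransform}(3) to re-express the residual theta expression as a residue of the meromorphic Appell series, with meticulous tracking of the sign, the $q$-power, and the $e(\cdot)$-factor to align with the claimed multiplier. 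Once these elementary-but-intricate identities are in hand, the elliptic transformation of $\widehat A_3$ follows by composition from the four elementary cases.
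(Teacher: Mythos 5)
The paper does not actually prove this statement: Theorem \ref{completeAtransform} is imported verbatim from Zwegers \cite{Zwegers2}, so there is no internal argument to compare against. Your outline is the standard direct verification and is sound in structure: the multiplier $(-1)^{n_1+m_1}e^{2\pi i(u(3n_1-n_2)-vn_1)}q^{3n_1^2/2-n_1n_2}$ does satisfy the cocycle identity under composition (the cross terms $q^{3n_1n_1'-n_1n_2'-n_2n_1'}$ obtained by evaluating one multiplier at the shifted arguments exactly complete the square in the $q$-exponent), so reduction to the four elementary shifts is legitimate, and your treatment of $u\mapsto u+1$ and $v\mapsto v+1$ is correct as stated. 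Three caveats on the $\tau$-shifts. First, as printed, \eqref{A3def} has $e^{2\pi i v}$ where Zwegers' definition has $e^{2\pi i n v}$; your assertion that each term carries $e^{2\pi i n v}$ silently corrects this typo, and the stated transformation cannot hold for the definition as literally printed. Second, for $u\mapsto u+\tau$ the reindexing $n\mapsto n-1$ leaves a factor $q^{-3n}$ in the numerator, and the identity $\frac{y^{-3}}{1-y}=\frac{1}{1-y}+y^{-1}+y^{-2}+y^{-3}$ with $y=e^{2\pi iu}q^n$ produces \emph{three} theta-type correction terms rather than a single boundary term; correspondingly, on the $\mathscr R_3$ side no reindexing of $j$ occurs for this shift (only the $R$-arguments move, by $3\tau$, requiring three applications of Proposition \ref{Rtransform}(2) to each summand). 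The cyclic reindexing of $j$ you describe actually belongs to the $v\mapsto v+\tau$ case, where $\vartheta(v+j\tau;3\tau)$ and $R(3u-v-j\tau;3\tau)$ shift together and the $j=2$ term is folded back using Proposition \ref{thetaTransform}(2) \emph{and} Proposition \ref{Rtransform}(2), not the former alone. Third, the $v\mapsto v+\tau$ anomaly you would compute is precisely the content of the paper's Proposition \ref{AtranslateThm}, except that the paper runs the logic in the opposite direction (assuming Theorem \ref{completeAtransform} and solving for the $A_3$-anomaly); carrying out your computation would make that proposition independent of the quoted theorem. These are bookkeeping corrections rather than gaps: the strategy would go through.
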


 We will also make use of the following results on the Mordell integral defined in \eqref{def_hmordell}.
 
 \begin{theorem}[{\cite[Theorem 1.2 (1), (2), (4)]{Zwegers1}}]\label{thm_Zh}  Let $z\in \mathbb C, \tau \in \mathbb H$. We have that  \begin{enumerate}
 \item $h(z;\tau) + h(z+1;\tau) = \frac{2}{\sqrt{-i\tau}} e^{\pi i \frac{(z+\frac12)^2}{\tau}},$ \\
 \item $h(z;\tau) + e\left(-z-\frac{\tau}{2}\right)h(z+\tau;\tau)= 2 e\left(-\frac{z}{2} - \frac{\tau}{8}\right), $\\ 
 \item $h$ is an even function of $z$.
 \end{enumerate}
 \end{theorem}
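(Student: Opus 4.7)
The plan is to establish each of the three identities by direct manipulation of the defining integral $h(z;\tau) = \int_{\mathbb R} \frac{e^{\pi i \tau t^2 - 2\pi z t}}{\cosh(\pi t)}\,dt$, using only Gaussian integration, Cauchy's theorem, and a symmetry substitution. Throughout, convergence is guaranteed since $\tau \in \mathbb H$ forces $\operatorname{Re}(\pi i \tau) < 0$, so the Gaussian factor $e^{\pi i \tau t^2}$ decays at $t = \pm\infty$ fast enough to dominate both $e^{-2\pi z t}$ and any horizontal contour shift of bounded imaginary part.

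For identity (1), I would add the two integrals defining $h(z;\tau)$ and $h(z+1;\tau)$ and combine them using the elementary identity $1 + e^{-2\pi t} = 2 e^{-\pi t}\cosh(\pi t)$. The $\cosh(\pi t)$ in the denominator cancels, leaving a pure Gaussian integral $2\int_{\mathbb R} e^{\pi i \tau t^2 - 2\pi(z+\frac12) t}\,dt$. Completing the square in the exponent and shifting the contour back to the real line (justified by the Gaussian decay together with Cauchy's theorem, noting that no poles are crossed because the integrand is now entire) produces $\frac{2}{\sqrt{-i\tau}} e^{\pi i (z+\frac12)^2/\tau}$ on the nose.

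For identity (2), the main tool is a contour shift from $\mathbb R$ to $\mathbb R + i$, combined with the residue theorem. The function $1/\cosh(\pi t)$ has its unique pole between these two lines at $t = i/2$, with residue $\frac{1}{\pi i}\,e^{-\pi i \tau/4 - \pi i z}$ after evaluating the numerator. Cauchy's theorem therefore gives $h(z;\tau) - \int_{\mathbb R + i} \frac{e^{\pi i \tau t^2 - 2\pi z t}}{\cosh(\pi t)}\,dt = 2 e^{-\pi i \tau/4 - \pi i z}$. Substituting $t \mapsto s + i$ in the second integral and using $\cosh(\pi s + \pi i) = -\cosh(\pi s)$ rewrites it as $-e^{-\pi i \tau - 2\pi i z}\, h(z+\tau;\tau)$, and rearranging (noting $e(-z - \tau/2) = e^{-2\pi i z - \pi i \tau}$ and $2 e(-z/2 - \tau/8) = 2 e^{-\pi i z - \pi i \tau/4}$) yields exactly the stated identity.

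Identity (3) is the easiest: applying the change of variables $t \mapsto -t$ in the definition of $h(-z;\tau)$ and using $\cosh(-\pi t) = \cosh(\pi t)$ recovers $h(z;\tau)$. The only step requiring any care across the whole proof is verifying the contour shift in (2), which amounts to checking that the integrand decays uniformly as $\operatorname{Re}(t) \to \pm\infty$ with $\operatorname{Im}(t) \in [0,1]$ — a direct consequence of the quadratic decay from $e^{\pi i \tau t^2}$ dominating the linear factor $e^{-2\pi z t}$ and the exponential growth of $1/\cosh(\pi t)$ at worst being $O(e^{-\pi|\operatorname{Re}(t)|})$.
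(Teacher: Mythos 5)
Your proof is correct. The paper itself gives no proof of this statement---it is quoted verbatim from Zwegers' thesis---and your argument (cancelling $\cosh(\pi t)$ against $1+e^{-2\pi t}$ and evaluating the resulting Gaussian for (1), shifting the contour to $\mathbb{R}+i$ and picking up the residue at $t=i/2$ for (2), and the substitution $t\mapsto -t$ for (3)) is precisely the standard proof appearing in that source, with all constants and the residue $\frac{1}{\pi i}e^{-\pi i\tau/4-\pi i z}$ checking out.
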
  
 
 Zwegers also showed how under certain hypotheses, the functions $h$ and $R$ can be written in terms of integrals involving the weight $3/2$ modular forms $g_{a,b}(\tau)$, defined for $a,b\in\mathbb R$ and $\tau \in \mathbb H$ by
 \begin{align}\label{def_gab}
 g_{a,b}(\tau) := \sum_{\nu \in a + \mathbb Z} \nu e^{\pi i \nu^2\tau + 2\pi i \nu b}.
 \end{align}

We have the following properties of $g_{a,b}$.
\begin{proposition}[{\cite[Proposition 1.15 (1), (2), (4), (5)]{Zwegers1}}]\label{prop_Zg}
The function $g_{a,b}$ satisfies:
\begin{enumerate}
\item[(1)] $g_{a+1,b}(\tau)= g_{a,b}(\tau)$, \\
\item[(2)] $g_{a,b+1}(\tau)= e^{2\pi ia} g_{a,b}(\tau)$,\\
\item[(3)] $g_{a,b}(\tau+1)= e^{-\pi ia(a+1)}g_{a,a+b+\frac12}(\tau)$,\\
\item[(4)] $g_{a,b}(-\frac{1}{\tau})= i e^{2\pi iab} (-i\tau)^{\frac32}g_{b,-a}(\tau)$.
\end{enumerate}
\end{proposition}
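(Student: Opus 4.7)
The plan is to establish each of (1)--(4) directly from the series
\[ g_{a,b}(\tau) = \sum_{\nu \in a + \mathbb Z} \nu\, e^{\pi i \nu^2 \tau + 2\pi i \nu b}, \]
with the first three following by elementary reindexing and the fourth by reducing to the classical Jacobi theta transformation.

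For (1), the summation set $a+\mathbb Z$ depends only on $a \bmod 1$, so no term of the series changes under $a \mapsto a+1$. For (2), writing $\nu = a + n$ with $n\in\mathbb Z$ shows that $e^{2\pi i \nu (b+1)} = e^{2\pi i \nu b}\cdot e^{2\pi i a}\cdot e^{2\pi i n}$; the last factor is $1$, and the constant $e^{2\pi i a}$ factors out of the sum. For (3), the substitution $\tau \mapsto \tau+1$ introduces a factor $e^{\pi i \nu^2}$; writing out the claimed right-hand side and dividing, one reduces to checking the identity
\[ e^{\pi i \nu^2 + \pi i a(a+1) - 2\pi i \nu a - \pi i \nu} = 1 \qquad \text{for all } \nu \in a+\mathbb Z. \]
Setting $\nu = a+n$ and completing the square collapses the exponent to $\pi i n(n-1)$, which is an even integer multiple of $\pi i$, proving the identity.

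Property (4) is the deepest of the four. I would introduce the auxiliary unary theta function
\[ \theta_{a,b}(\tau) := \sum_{\nu \in a+\mathbb Z} e^{\pi i \nu^2 \tau + 2\pi i \nu b}, \]
observe that termwise differentiation (justified since the series of derivatives converges uniformly on compact subsets of $\mathbb H$) gives $g_{a,b}(\tau) = \frac{1}{2\pi i}\,\frac{\partial}{\partial b}\theta_{a,b}(\tau)$, and apply Poisson summation to obtain the classical transformation
\[ \theta_{a,b}(-1/\tau) = \sqrt{-i\tau}\, e^{2\pi i a b}\, \theta_{b,-a}(\tau). \]
Differentiating both sides with respect to $b$: the left-hand side yields $2\pi i\, g_{a,b}(-1/\tau)$, since $b$ on the left appears only in the exponent $2\pi i\nu b$. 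On the right-hand side one must handle the fact that $b$ appears both in the exponential prefactor $e^{2\pi i a b}$ and, on $\theta_{b,-a}(\tau)$, both as the lattice shift and inside the exponent. Unfolding $\theta_{b,-a}(\tau) = \sum_{m\in\mathbb Z} e^{\pi i (b+m)^2\tau - 2\pi i (b+m)a}$ and differentiating termwise produces
\[ \frac{\partial}{\partial b}\theta_{b,-a}(\tau) = 2\pi i \tau\, g_{b,-a}(\tau) - 2\pi i a\, \theta_{b,-a}(\tau). \]
Combined with the derivative of the prefactor, the two $\theta_{b,-a}(\tau)$ contributions cancel, leaving
\[ 2\pi i\, g_{a,b}(-1/\tau) = 2\pi i \tau\, \sqrt{-i\tau}\, e^{2\pi i a b}\, g_{b,-a}(\tau). \]
The identity $\tau\sqrt{-i\tau} = i(-i\tau)^{3/2}$, verified on the principal branch by writing $(-i\tau)^{3/2} = (-i\tau)\sqrt{-i\tau}$ and pulling out the factor $-i$, then converts this into the desired form.

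The main obstacle will be the differentiation of $\theta_{b,-a}(\tau)$ in $b$: the parameter $b$ enters both as the lattice-shift parameter and, after reindexing, as a constant inside the exponent, so one must unfold to $\sum_{m \in \mathbb Z}$ before differentiating termwise and then recognize both $g_{b,-a}(\tau)$ and $\theta_{b,-a}(\tau)$ on the resulting side. A secondary care is required to use a consistent branch of the square root throughout the comparison $\tau\sqrt{-i\tau} = i(-i\tau)^{3/2}$, which can be checked at a convenient base point such as $\tau = i$.
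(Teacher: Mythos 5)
Your proof is correct. Note that the paper itself offers no proof of this proposition; it is imported verbatim from Zwegers' thesis (Proposition 1.15 there), where parts (1)--(3) are likewise handled by reindexing $\nu = a+n$ and parts such as (4) by exploiting the relation $g_{a,b} = \frac{1}{2\pi i}\partial_b \theta_{a,b}$ together with the Poisson-summation transformation of the unary theta series. Your derivation --- including the cancellation of the two $\theta_{b,-a}(\tau)$ contributions and the branch check $\tau\sqrt{-i\tau} = i(-i\tau)^{3/2}$ --- is the standard argument and checks out.
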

 
  \begin{theorem}[{\cite[Theorem 1.16 (2)]{Zwegers1}}]\label{thm_Zh2}
 Let $\tau \in \mathbb H$. For $a,b \in (-\frac12,\frac12)$, we have
 $$h(a\tau-b;\tau) = -  e\left(\tfrac{a^2\tau}{2} - a(b+\tfrac12)\right) \int_{0}^{i\infty} \frac{g_{a+\frac12,b+\frac12}(\rho)}{\sqrt{-i(\rho+\tau)}}d\rho.$$ 
 \end{theorem}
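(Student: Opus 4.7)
The plan is to deduce the quantum modular transformation of $e^{-\pi i x/12}B^+_{n,N}(x)$ from the (already established) modular transformation of its completion $\widehat{B}_n({\boldsymbol{\zeta_{n,N}}};q)$, by separating holomorphic from non-holomorphic pieces and showing that the latter extend analytically across $\mathbb R\smallsetminus\{-d/c\}$. First, I invoke \cite[Theorem 1.1]{F-K}: writing $\widehat{B}_n = \widehat{\mathcal H} + \widehat{\mathcal A}$, the two summands are non-holomorphic modular forms of weights $3/2$ and $1/2$ on $\Gamma_{n,N}$ with character $\chi_\gamma^{-1}$. Decomposing each into its holomorphic part and its non-holomorphic completion, $\widehat{\mathcal H}_{n,N}=\mathcal H_{n,N}+\mathcal H^-_{n,N}$ and $\widehat{\mathcal A}_{n,N}=\mathcal A_{n,N}+\mathcal A^-_{n,N}$, the modular identities
\[
\widehat{\mathcal A}_{n,N}(\tau) = \chi_\gamma(c\tau+d)^{-\frac12}\widehat{\mathcal A}_{n,N}(\gamma\tau),\qquad \widehat{\mathcal H}_{n,N}(\tau) = \chi_\gamma(c\tau+d)^{-\frac32}\widehat{\mathcal H}_{n,N}(\gamma\tau)
\]
force, upon equating holomorphic parts on $\mathbb H$,
\[
H^{(1)}_{n,N,\gamma}(\tau) = -\bigl(\mathcal A^-_{n,N}(\tau)-\chi_\gamma(c\tau+d)^{-\frac12}\mathcal A^-_{n,N}(\gamma\tau)\bigr),
\]
and similarly for $H^{(2)}_{n,N,\gamma}$. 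The crucial point is that the non-holomorphic pieces $\mathcal A^-_{n,N}$ and $\mathcal H^-_{n,N}$ are essentially sums of the Zwegers functions $\mathscr R_3$ from \eqref{AminusDef} together with the Eichler-integral pieces produced when one applies derivatives $\frac{d}{dw}$ to completed Appell functions; each such ingredient, after passing to the difference above, can be rewritten as a period integral of one of the weight $3/2$ theta series $g_{a,b}$ via Theorem \ref{thm_Zh2} and the Mordell-integral identities of Theorem \ref{thm_Zh} and Proposition \ref{Rtransform}.

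Second, I show that $\mathcal A_{n,N}$ and $\mathcal H_{n,N}$ converge on the set $Q_{\boldsymbol{\zeta_{n,N}}}$. The quantum set conditions, $\beta_j\nmid k$ and $|\tfrac{\alpha_j}{\beta_j}k-[\tfrac{\alpha_j}{\beta_j}k]|>\tfrac{1}{6}$, are tailored so that at $x=h/k$ in lowest terms, every Pochhammer denominator in $R_n({\boldsymbol{\zeta_{n,N}}};\zeta_k^h)$ is nonzero and the pole-free residue expansion of $A_3(\tfrac{\alpha_j}{\beta_j},-2\tau;\tau)$ at the relevant root of unity survives. The closed-form evaluation of $R_n$ at these roots of unity (as a rational polynomial in roots of unity) will be addressed in Section \ref{quantumSet}, and a parallel computation will yield the value of $b_n$ at the same points. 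Thus $\mathcal A_{n,N}$ and $\mathcal H_{n,N}$ extend from $\mathbb H$ to $\mathbb H \cup Q_{\boldsymbol{\zeta_{n,N}}}$.

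Third, to verify that each $H^{(i)}_{n,N,\gamma}$ extends analytically to all of $\mathbb R\smallsetminus\{-d/c\}$, I use the period-integral representation in the paragraph above: the relevant period integrals $\int_{\gamma^{-1}(i\infty)}^{i\infty}g_{a,b}(\rho)/\sqrt{-i(\rho+x)}\,d\rho$ and the companion integrals coming from the derivative terms are manifestly analytic in $x$ wherever the branch of the square root is unambiguous, that is on $\mathbb R\smallsetminus\{-d/c\}$. This is the standard mechanism by which the non-holomorphic completion contributes a real-analytic (indeed $C^\infty$) error to the quantum transformation, and is the same mechanism used in the $N=0$ case \cite{WIN}.

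Finally, to obtain the explicit formula for $H^{(2)}_{n,N,S_\ell}$, I specialize the above framework to $\gamma=S_\ell$. The contribution coming from $\widehat{\mathcal A}$ is identical to the $N=0$ calculation, giving $H^{(1)}_{n,N,S_\ell}=H_{n,S_\ell}$ of \eqref{eqn_Hsell}. For $H^{(2)}_{n,N,S_\ell}$, the new ingredient is the derivative-of-Appell part of $b_n$; I apply the elliptic and modular transformations of $\widehat{A}_3$ (Theorem \ref{completeAtransform} together with Zwegers's modular transformation) and then differentiate in the Jacobi variable, which, in the language of Mordell integrals, produces the combination $H_{\alpha_j,\beta_j}$ and $D_{\alpha_j,\beta_j}$ appearing in the statement, plus a remainder $\mathcal E_2$ that will be shown analytic on $\mathbb R\smallsetminus\{-1/\ell\}$ in Proposition \ref{prop_hmintsf}. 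The expected main obstacle is precisely this last step: careful bookkeeping of the derivative of Zwegers's completion identity along $S_\ell$, because differentiating in $u$ at $u=0$ produces extra terms involving $\vartheta'(0;3\tau)/\vartheta(0;3\tau)$-type factors and powers of $(\ell x+1)$ that do not appear in the $N=0$ case, and these must be assembled into the stated closed form. Once that calculation is completed, the theorem follows.
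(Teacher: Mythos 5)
Your proposal does not address the statement at hand. The statement to be proved is Zwegers's identity
\[
h(a\tau-b;\tau) \;=\; -\,e\!\left(\tfrac{a^2\tau}{2}-a\bigl(b+\tfrac12\bigr)\right)\int_{0}^{i\infty}\frac{g_{a+\frac12,b+\frac12}(\rho)}{\sqrt{-i(\rho+\tau)}}\,d\rho,
\]
a self-contained analytic identity expressing the Mordell integral $h(z;\tau)=\int_{\mathbb R}\frac{e^{\pi i\tau t^2-2\pi zt}}{\cosh\pi t}\,dt$ as a period integral of the weight $3/2$ unary theta function $g_{a,b}$. What you have written instead is an outline of the proof of the paper's main result (the quantum modular transformation of $e^{-\pi ix/12}B^+_{n,N}(x)$, Theorem \ref{thm_main_NN}): you decompose $\widehat{B}_n$ into $\widehat{\mathcal H}+\widehat{\mathcal A}$, discuss convergence on the quantum set, and describe the $S_\ell$ specialization. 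None of that has any bearing on the identity above; indeed your argument \emph{invokes} Theorem \ref{thm_Zh2} as a tool in its third and fourth paragraphs, so it is circular as a proof of that theorem.

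For the record, the paper offers no proof of this statement either: it is quoted verbatim from Zwegers's thesis (Theorem 1.16 (2) of \cite{Zwegers1}). An actual proof would proceed along Zwegers's lines: first establish the companion identity expressing $R(a\tau-b;\tau)$ as a period integral of $g_{a+\frac12,b+\frac12}$ over $(-\bar\tau,i\infty)$ (by integrating the theta series term by term against $(-i(\rho+\tau))^{-1/2}$ and recognizing the complementary error functions $E(\cdot)$ in the definition of $R$), and then combine this with the relation $h(u;\tau)=\frac{1}{\sqrt{-i\tau}}e^{\pi iu^2/\tau}R\bigl(\frac{u}{\tau};-\frac1\tau\bigr)+R(u;\tau)$ of Proposition \ref{Rtransform}~(5) and the modular transformation $g_{a,b}(-\frac1\tau)=ie^{2\pi iab}(-i\tau)^{3/2}g_{b,-a}(\tau)$ of Proposition \ref{prop_Zg}~(4) to splice the two period integrals into a single integral from $0$ to $i\infty$. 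If you want to prove this theorem, that is the computation you need to carry out; the material you wrote belongs to a different theorem entirely.
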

 
\subsection{Completing the function $R_{n,N}$}\label{subsec_compl} 
 Here we review some preliminary results and functions from \cite{F-K}.   Recall that
 $n$ and $N$ are fixed integers satisfying $0\leq N\leq \big\lfloor\frac{n}{2}\big\rfloor$, and $n\geq 2$.  The vectors 
 ${\boldsymbol{\zeta}_{\boldsymbol{n,N}}}$ are defined in \eqref{zetavec}.    To complete the function $R_{n,N}(\tau) := R(\boldsymbol{\zeta}_{\boldsymbol{n,N}};q)$ as described in $\S \ref{intro}$, we use the functions from \cite{F-K} 
 \begin{align} \nonumber
{F}^+_{m,s}(\boldsymbol{x};\tau) \!\!&:=\!  \lim_{w\to 0} \!\left(\!\frac{e\left(- x_m \right)}{e(w) - e(-w)} \!\!\left(\!\!e^{s\pi i w}\frac{A_3\left(-w +  x_m ,-2\tau;\tau\!\right)}{\Pi_m( \boldsymbol{x},-w)}\!
 -\!e^{-s \pi i w} \frac{A_3 \left(w +  x_m,-2\tau;\tau \right)}{\Pi_m(\boldsymbol{x},w)}\!\right) \!\!\right)\!\!,  \\ \nonumber &  \\\label{def_Fmsminus} 
{F}^-_{m,s}(\boldsymbol{x};\tau) &\!\!:=\!  \lim_{w\to 0} \!\left(\!\frac{e\left(- x_m \right)}{e(w) - e(-w)} \!\!\left(\!\!e^{s\pi i w}\frac{\mathscr R_3\left(-w +  x_m,-2\tau;\tau\!\right)}{\Pi_m( \boldsymbol{x},-w)}\!
 -\!e^{-s \pi i w} \frac{\mathscr R_3 \left(w +  x_m ,-2\tau;\tau \right)}{\Pi_m( \boldsymbol{x},w)}\right) \!\!\right)\!\!, \end{align}
 with ${A}_3$ as defined in \eqref{A3def},  $\mathscr R_3$ as defined in \eqref{AminusDef}, and $\Pi_m({ \boldsymbol x},w)$ as defined explicitly in \cite{F-K}. 
The corresponding completed function is: 
\begin{align*}
\widehat{F}_{m,s}(\boldsymbol{x};\tau) &:= F^+_{m,s}(\boldsymbol{x};\tau) + F^-_{m,s}(\boldsymbol{x};\tau).\end{align*}  

Using $\widehat{A}_3$ (see \eqref{def_A3hat}), we also define \begin{align*}
  \widehat{G}_{m,s}( {\boldsymbol{\alpha}_{\boldsymbol{n,N}}};\tau) &:= \frac{\zeta_{\beta_m}^{-\alpha_m}}{2}\left(\frac{4-s}{\Pi_m( {\boldsymbol{\alpha}_{\boldsymbol{n,N}}},0)} + \frac{\frac{d}{dw}\Pi_m( {\boldsymbol{\alpha}_{\boldsymbol{n,N}}},w) \Big |_{w=0}}{\pi i \left(\Pi_m( {\boldsymbol{\alpha}_{\boldsymbol{n,N}}},0)\right)^2}\right) \widehat{A}_3\left(\frac{\alpha_m}{\beta_m},-2\tau;\tau\right),\\
\widehat{H}_{m,s}( {\boldsymbol{\alpha}_{\boldsymbol{n,N}}};\tau) &:= \widehat{F}_{m,s}( {\boldsymbol{\alpha}_{\boldsymbol{n,N}}};\tau) + \widehat{G}_{m,s}( {\boldsymbol{\alpha}_{\boldsymbol{n,N}}};\tau).
\end{align*} 
The non-holomorphic functions from \cite[Theorem 1.1]{F-K} (see \S \ref{intro}) are defined by
\begin{align}
\widehat{\mathcal H}\left( {\boldsymbol{\zeta}_{\boldsymbol{n,N}}};q \right) &= \widehat{\mathcal H}_{n,N}\left( {\boldsymbol{\zeta}_{\boldsymbol{n,N}}};q \right)
:= \frac{1}{\eta(\tau)}\left(\sum_{j=1}^N  \!\!\left(\zeta_{2\beta_j}^{-\alpha_j} \widehat{H}_{j,1}\!\left( {\boldsymbol{\alpha}_{\boldsymbol{n,N}}};\tau\right) \!-\! \zeta_{2\beta_j}^{-3\alpha_j}\widehat{H}_{j,3}\!\left( {\boldsymbol{\alpha}_{\boldsymbol{n,N}}};\tau\right)\right)\right), \label{hatcalh} \\ \widehat{\mathcal A}\left( {\boldsymbol{\zeta}_{\boldsymbol{n,N}}};q \right)  &=  \widehat{\mathcal A}_{n,N}\left( {\boldsymbol{\zeta}_{\boldsymbol{n,N}}};q \right)  := \frac{1}{\eta(\tau)}\left(\sum_{j=N+1}^{n-N} (\zeta_{2\beta_j}^{-3\alpha_j}-\zeta_{2\beta_j}^{-\alpha_j})\frac{\widehat{A}_3\left(\frac{\alpha_j}{\beta_j},-2\tau;\tau\right)}{\Pi_{j}^\dagger({\boldsymbol{\alpha}_{\boldsymbol{n,N}}})}\right).\label{hatcala}
\end{align} 
We   recall the constant (a finite product) $\Pi_{j}^\dagger({\boldsymbol{\alpha}_{\boldsymbol{n,N}}})$ is defined explicitly in \cite[(4.2)]{F-K} (where one must replace $n\mapsto j$ and $k\mapsto n$).
 The holomorphic parts $\mathcal H$ and $\mathcal A$ of the functions $\widehat{\mathcal H}$ and $\widehat{\mathcal A}$ are defined by replacing the non-holomorphic functions $\widehat{H}_{j,1}, \widehat{H}_{j,3}$ and $\widehat{A}_3$ with their respective holomorphic parts $H_{j,1}, H_{j,3}$ and $A_3$ in \eqref{hatcalh} and \eqref{hatcala} above.

\section{The quantum set}\label{quantumSet}

 We call a subset $S \subseteq \Q$ a {\em quantum set} for a function $F$ with respect to the group $G\subseteq \textnormal{SL}_2(\Z)$ if both $F(x)$ and $F(Mx)$ exist (are non-singular) for all $x\in S$ and $M\in G$. 

In this section, we will show that $\qs$ as defined in \eqref{qSetDef} is a quantum set for $\mathcal{A}_{n,N}$ and $\mathcal{H}_{n,N}$ with respect to the group $\qSubgroup$.  Recall that $\qs$ is defined as

\begin{align*} \qs := \left\{\frac{h}{k}\in \Q\; \middle\vert\; \begin{aligned} & \ h\in\Z, k\in\N, \gcd(h,k) = 1, \ \beta_j \nmid k\ \forall\ 1\le j\le n,\\&\left\vert \frac{\alpha_j}{\beta_j}k - \left[\frac{\alpha_j}{\beta_j}k\right]\right\vert > \frac{1}{6}\ \forall\ 1\le j\le n\end{aligned} \right\},\end{align*}
where $[x]$ is the closest integer to $x$.
 
We further recall that the holomorphic part of our ``modular object"  (see Section \ref{intro}) is $R_{n,N}+b_{n,N}$. We now analyze the convergence of $R_{n,N}$ and $b_{n,N}$ separately.  

It was shown in \cite[Section 3]{WIN} that $\qs$ is a quantum set for $\mathcal A_{n,N}(\tau) = q^{-\frac{1}{24}} R_{n,N}(\tau)$. Moreover, the following theorem establishes the convergence of $R_{n,N}$ on $\qs$.
 
\begin{theoremno}[\cite{WIN} Theorem 3.2] For $\boldsymbol{\zeta}_{\boldsymbol{n,N}}$ as in \eqref{zetavec}, if $\frac{h}{k} \in \qs$, then $R_n(\boldsymbol{\zeta}_{\boldsymbol{n,N}}; \zeta_k^h)$ converges and can be evaluated as a finite sum.  In particular,  we have that: 
 
\begin{multline*} 
R_n(\boldsymbol{\zeta}_{\boldsymbol{n,N}}; \zeta_k^h) = \prod_{j=1}^n \frac{1}{1 - ((1-x_j^k)(1-x_j^{-k}))^{-1}}\\
\times \!\!\!\!\! \sum_{\substack{0 < m_1\le k\\ 0 \le m_2, \dots, m_n < k}} \frac{\zeta_k^{h[(m_1 + m_2 + \dots + m_n)^2 + (m_1 + \dots + m_{n-1}) + (m_1 + \dots + m_{n-2}) + \dots + m_1]}}{(x_1\zeta_k^h;\zeta_k^h)_{m_1} \left(\frac{\zeta_k^h}{x_1};\zeta_k^h\right)_{m_1} (x_2 \zeta_k^{hm_1};\zeta_k^h)_{m_2 + 1} \left(\frac{\zeta_k^{hm_1}}{x_2};\zeta_k^h\right)_{m_2+1}} \\ 
\times \frac{1}{(x_3 \zeta_k^{h(m_1 + m_2)};\zeta_k^h)_{m_3 + 1}\!\!\left(\frac{\zeta_k^{h(m_1 + m_2)}}{x_3};\zeta_k^h\!\right)_{\!m_3 + 1} \!\!\!\!\!\!\!\!\!\!\cdots(x_n \zeta_k^{h(m_1 + \dots + m_{n-1})};\zeta_k^h)_{ m_n+1} \!\!\left(\!\frac{\zeta_k^{h(m_1 + \dots + m_{n-1})}}{x_n};\zeta_k^h\!\right)_{\!  m_n+1} },
\end{multline*}   
where $\boldsymbol{\zeta}_{\boldsymbol{n,N}} = (x_1, x_2, \dots, x_n)$.
\end{theoremno}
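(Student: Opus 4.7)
My plan for proving this theorem is to perform a careful Euclidean-division decomposition of the summation indices $m_1, \ldots, m_n$ modulo $k$, combined with the root-of-unity identity $\prod_{j=0}^{k-1}(1 - aq^j) = 1 - a^k$ (valid when $q$ is a primitive $k$-th root of unity), so that the multisum defining $R_n(\boldsymbol{\zeta_{n,N}}; \zeta_k^h)$ decouples into a product of $n$ geometric series (one per variable index) and a finite multisum over residue classes. Writing $m_1 = k\ell_1 + r_1$ with $\ell_1 \geq 0$ and $r_1 \in \{1, \ldots, k\}$, and $m_j = k\ell_j + r_j$ for $j \geq 2$ with $\ell_j \geq 0$ and $r_j \in \{0, \ldots, k-1\}$, recovers exactly the outer summation ranges in the theorem statement. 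Since $\gcd(h,k)=1$, $q = \zeta_k^h$ is a primitive $k$-th root of unity, so $q^k = 1$.

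I would first analyze the numerator. The exponent
\[
E(m_1, \ldots, m_n) := (m_1 + \cdots + m_n)^2 + \sum_{i=1}^{n-1}(m_1 + \cdots + m_i)
\]
is quadratic, but reduces modulo $k$ to the same polynomial evaluated at the residues: writing $m_1 + \cdots + m_n = kL + R$ with $R = r_1 + \cdots + r_n$, we get $(kL+R)^2 \equiv R^2 \pmod{k}$, and each linear piece satisfies $m_1 + \cdots + m_i \equiv r_1 + \cdots + r_i \pmod{k}$. Hence $q^{E(m_1,\ldots,m_n)} = \zeta_k^{h E(r_1,\ldots,r_n)}$, producing the numerator displayed in the theorem.

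Next, I would simplify each Pochhammer symbol using $\prod_{i=0}^{k-1}(1 - aq^i) = 1 - a^k$. Applied with $a = x_j q^{M_j}$ (where $M_j := m_1 + \cdots + m_{j-1}$, so $q^{kM_j} = 1$), each block of $k$ consecutive factors contributes $1 - (x_j q^{M_j})^k = 1 - x_j^k$. Thus $\ell_j$ full cycles yield $(1 - x_j^k)^{\ell_j}$, and the remaining factors produce exactly the Pochhammer $(x_j \zeta_k^{h R_j}; \zeta_k^h)_{r_j + 1}$ (with $R_j := r_1 + \cdots + r_{j-1}$) appearing in the theorem, and analogously $(1 - x_j^{-k})^{\ell_j}$ arises from the reciprocal Pochhammers. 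Reassembling, the summand factors cleanly, and summing the geometric series $\sum_{\ell_j \geq 0} ((1 - x_j^k)(1 - x_j^{-k}))^{-\ell_j}$ yields the prefactor $\prod_{j=1}^n (1 - ((1-x_j^k)(1-x_j^{-k}))^{-1})^{-1}$, matching the stated closed form.

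The main technical obstacle is rigorously justifying the rearrangement of the infinite multisum: since the summands are complex-valued, absolute convergence of the decoupled expression must be established. This reduces to $|(1 - x_j^k)(1 - x_j^{-k})| > 1$ for each $j$. Computing $(1 - x_j^k)(1 - x_j^{-k}) = 2 - 2\cos(2\pi \alpha_j k/\beta_j) = 4\sin^2(\pi \alpha_j k / \beta_j)$, the condition becomes $|\sin(\pi \alpha_j k/\beta_j)| > \tfrac{1}{2}$; writing $\alpha_j k/\beta_j = [\alpha_j k/\beta_j] + \delta$ with $|\delta| \leq \tfrac{1}{2}$, this is equivalent to $|\delta| > \tfrac{1}{6}$, which is precisely the inequality defining $\qs$. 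The auxiliary requirement that $1 - x_j^k \neq 0$ reduces to $\beta_j \nmid k$ (using $\gcd(\alpha_j, \beta_j) = 1$), the other defining condition of $\qs$. Once absolute convergence is in hand, Fubini justifies the interchange of summations and yields the claimed explicit finite-sum evaluation.
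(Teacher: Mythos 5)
Your proposal is correct and follows the same route as the proof in \cite{WIN} (the paper quotes this theorem from there without reproducing the argument): decompose each index as $m_j = k\ell_j + r_j$, use $\prod_{i=0}^{k-1}(1-aq^i)=1-a^k$ at the primitive root of unity $q=\zeta_k^h$ to peel off $\left((1-x_j^k)(1-x_j^{-k})\right)^{\ell_j}$ from each pair of Pochhammer symbols, and sum the resulting geometric series, with the two defining conditions of $\qs$ supplying respectively the nonvanishing of $1-x_j^{\pm k}$ (and of every factor in the residual Pochhammers) and the inequality $4\sin^2(\pi\alpha_jk/\beta_j)>1$ needed for absolute convergence. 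Your treatment of the quadratic exponent modulo $k$ and of the shifted base points $q^{m_1+\cdots+m_{j-1}}$ is exactly right, so nothing essential is missing.
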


We now turn our attention to $b_{n,N}$. In what follows, as in the definition of $\qs$, we take $h\in\Z$, $k\in\N$ such that $\gcd(h,k) = 1$. 
To show that $b_n(\boldsymbol{\zeta}_{\boldsymbol{n,N}}; \zeta)$ is defined for $\zeta = e^{2\pi i h/k}$ with $\frac{h}{k} \in\qs$, it is enough to show that 
\begin{equation}\label{bnPiece}\frac{1}{(\zeta)_\infty}A_3\left(x_j, -\frac{2h}{k}; \frac{h}{k}\right)\end{equation}
is defined, with $A_3(u, v; \tau)$ as in \eqref{A3def}.  For this proof, we will make use of the following transformation formula for $A_3(u, v; \tau)$.

\begin{proposition}\label{AtranslateThm} For $u, v\in\C$, $\tau\in\mathbb{H}$ we have that
\begin{equation}\label{Atranslate}A_3(u, v+\tau; \tau) = e^{-2\pi i u}A_3(u, v;\tau) + ie^{\pi i u-\pi i v - 3\pi i \tau/4}\vartheta(v; 3\tau),\end{equation}
where $\vartheta$ is as defined in \eqref{thetaDef}.\end{proposition}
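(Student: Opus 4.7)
The plan is to start from the defining series of $A_3$, substitute $v+\tau$ in place of $v$, and then algebraically split the resulting sum into a copy of $A_3(u,v;\tau)$ plus a pure theta-type remainder. After substitution one has
\[
A_3(u, v+\tau; \tau) = e^{3\pi i u} \sum_{n\in\Z} \frac{(-1)^n q^{3n(n+1)/2}\, q^n\, e^{2\pi i n v}}{1 - e^{2\pi i u}q^n},
\]
so the task is to absorb the extra factor $q^n$ in the numerator. The key manipulation is the elementary identity
\[
\frac{q^n}{1 - e^{2\pi i u}q^n} = e^{-2\pi i u}\!\left(\frac{1}{1 - e^{2\pi i u}q^n} - 1\right),
\]
which cleanly separates the sum into two pieces.

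The first piece immediately reassembles into $e^{-2\pi i u}A_3(u,v;\tau)$, giving the first term on the right of \eqref{Atranslate}. The second piece is the index-independent remainder
\[
-\,e^{\pi i u} \sum_{n\in\Z} (-1)^n q^{3n(n+1)/2} e^{2\pi i n v},
\]
which I would identify with a scalar multiple of $\vartheta(v;3\tau)$ as follows: parametrize the half-integers $\nu \in \tfrac12+\Z$ appearing in \eqref{thetaDef} (with $\tau$ replaced by $3\tau$) by $\nu = n+\tfrac12$. A short computation of the exponent $3\pi i\nu^2\tau + 2\pi i\nu(v+\tfrac12)$ produces the $n$-dependent expression $3\pi i n(n+1)\tau + 2\pi i n v + \pi i n$ together with an $n$-independent phase $3\pi i \tau/4 + \pi i v + \pi i/2$. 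Factoring the phase out of $\vartheta(v;3\tau)$, inverting, and using $e^{\pi i/2}=i$ shows that the remainder above equals $i e^{\pi i u - \pi i v - 3\pi i \tau/4}\vartheta(v;3\tau)$, as claimed.

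The main (and essentially only) obstacle is the careful bookkeeping of phase factors when converting between the half-integer indexing used in the definition of $\vartheta$ and the integer indexing arising naturally from the Appell series; the reassembly step itself is routine. One also checks along the way that the series manipulations are justified, which is immediate since $\tau \in \mathbb H$ and $u,v\in \mathbb C$ are fixed (with $e^{2\pi i u}q^n \ne 1$ for all $n$, so no denominator vanishes in the formal rearrangement).
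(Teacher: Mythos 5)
Your proof is correct, but it takes a genuinely different route from the paper's. You work directly with the $q$-series: after shifting $v\mapsto v+\tau$ you absorb the extra factor $q^n$ via the partial-fraction identity $\frac{q^n}{1-e^{2\pi iu}q^n}=e^{-2\pi iu}\bigl(\frac{1}{1-e^{2\pi iu}q^n}-1\bigr)$, reassemble the first piece into $e^{-2\pi iu}A_3(u,v;\tau)$, and recognize the leftover bilateral theta sum as $-ie^{-\pi iv-3\pi i\tau/4}\vartheta(v;3\tau)$ after reindexing $\nu=n+\frac12$; I have checked the phase bookkeeping and the absolute convergence justifying the split, and both are fine (note you are implicitly using the standard definition of $A_3$ with $e^{2\pi inv}$ in the numerator, which is clearly what is intended in \eqref{A3def} despite the missing $n$ there). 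The paper instead argues through the completed function: it first derives the elliptic shift of the nonholomorphic completion, $\mathscr R_3(u,v+\tau;\tau)=e^{-2\pi iu}\mathscr R_3(u,v;\tau)-ie^{\pi iu-\pi iv-3\pi i\tau/4}\vartheta(v;3\tau)$, using the transformation laws of $\vartheta$ and $R$ from Propositions \ref{thetaTransform} and \ref{Rtransform}, and then subtracts this from Zwegers' elliptic transformation of $\widehat A_3$ (Theorem \ref{completeAtransform}). Your argument is more elementary and self-contained, needing only the series definitions; the paper's approach avoids any series manipulation by leaning on the already-established Jacobi-form machinery for $\widehat A_3$, at the cost of routing a statement about the holomorphic function through its nonholomorphic completion. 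Either is acceptable.
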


\begin{proof}
To prove our desired transformation formula, we will first rewrite $\mathscr{R}_3(u, v+\tau; \tau)$ in terms of $\mathscr{R}_3(u, v;\tau)$.  By definition \eqref{AminusDef}
\begin{align}
	\mathscr{R}_3(u, v + \tau; \tau) = &\frac{i}{2}\vartheta(v + \tau; 3\tau) R(3u - v - \tau; 3\tau) + \frac{i}{2}e^{2\pi i u}\vartheta(v + 2\tau; 3\tau) R(3u - v - 2\tau; 3\tau)\nonumber\\
	&+ \frac{i}{2}e^{4\pi i u}\vartheta(v + 3\tau; 3\tau)R(3u - v - 3\tau; 3\tau).\label{toTranslate}
\end{align}
Letting $\tau\mapsto3\tau$ in Proposition \ref{thetaTransform}, we can rewrite the third summand \eqref{toTranslate} as
\begin{align*}
	\frac{i}{2}e^{4\pi i u}\vartheta(v + 3\tau; 3\tau)R(3u - v - 3\tau; 3\tau) &= -\frac{i}{2}e^{-3\pi i \tau - 2\pi i v + 4\pi i u}\vartheta(v; 3\tau)R(3u - v - 3\tau; 3\tau)
\end{align*}
We now let $\tau\mapsto3\tau$ and $u\mapsto 3u - v - 3\tau$ in the second transformation in Proposition \ref{Rtransform} to obtain
\begin{align}
	-\frac{i}{2}&e^{-3\pi i \tau - 2\pi i v + 4\pi i u}\vartheta(v; 3\tau)R(3u - v - 3\tau; 3\tau)\nonumber\\
	&= -i e^{\pi i u - \pi i v - 3\pi i \tau/4}\vartheta(v; 3\tau) + \frac{i}{2} e^{-2\pi i u}\vartheta(v; 3\tau)R(3u - v; 3\tau)\label{finishTranslate}
\end{align}
Plugging \eqref{finishTranslate} in for \eqref{toTranslate} and using the definition of $\mathscr{R}_3(u, v;\tau)$, we see that
\begin{align}
	\mathscr{R}_3(u, v+\tau; \tau) 	&= e^{-2\pi i u}\mathscr{R}_3(u, v; \tau) - i e^{\pi i u - \pi i v - 3\pi i \tau/4}\vartheta(v; 3\tau).\label{AminusTransform}
\end{align}
By Theorem \ref{completeAtransform}, we have that
\[A_3(u, v+\tau; \tau) +\mathscr{R}_3(u, v+\tau; \tau) = e^{-2\pi i u}A_3(u, v;\tau) + e^{-2\pi i u}\mathscr{R}_3(u, v;\tau).\]
Using \eqref{AminusTransform}, we then achieve the desired result.
\end{proof}

We are now ready to show that the function in \eqref{bnPiece} converges.

\begin{theorem}\label{A3conv} For $\frac{h}{k}\in\qs$, $\zeta=e^{2\pi i h/k}$, and $x_j = e^{2\pi i \alpha_j/\beta_j}$ the $j$-th component in $\boldsymbol{\zeta}_{\boldsymbol{n,N}}$ (as in \eqref{zetavec}),  the function $\frac{1}{(\zeta)_\infty} A_3\left(\frac{\alpha_j}{\beta_j}, - \frac{2h}{k}; \frac{h}{k}\right)$ converges.  In particular,

\begin{align*}
	\frac{1}{(\zeta)_\infty} &A_3\left(\frac{\alpha_j}{\beta_j}, - \frac{2h}{k}; \frac{h}{k}\right) = \frac{x_j^{5/2}}{1- x_j} R_1(x_j; \zeta) + x_j^{3/2}\\
	&= \frac{x_j^{5/2}}{(1-x_j)(1 - ((1-x_j^k)(1-x_j^{-k}))^{-1})}\sum_{0\le s < k} \frac{\zeta^{s^2}}{(x_j\zeta; \zeta)_s (x_j^{-1}\zeta; \zeta)_s} + x_j^{3/2}.
\end{align*}
\end{theorem}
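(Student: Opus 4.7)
The plan is to establish, first as a holomorphic identity on $\mathbb H$, the formula
\[
\frac{1}{(q;q)_\infty}\, A_3\!\left(\tfrac{\alpha_j}{\beta_j},-2\tau;\tau\right) \;=\; \frac{x_j^{5/2}}{1-x_j}\, R_1(x_j;q) \;+\; x_j^{3/2},
\]
and then to specialize at $\tau = h/k \in \qs$. Once this identity is in hand, its right-hand side is finite at $q = \zeta$ by \cite{WIN} Theorem~3.2 together with the $\qs$-condition $\beta_j \nmid k$ (which forces $1-x_j \neq 0$), so $(\zeta;\zeta)_\infty^{-1} A_3(\cdots)$ converges to that value by continuity, and the second equality follows by inserting the explicit finite-sum expression for $R_1(x_j;\zeta)$ from that theorem applied with $n=1$.

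To prove the identity on $\mathbb H$, I would start by unwinding the series definition of $A_3$ in \eqref{A3def} (read in the standard Zwegers convention, with $e^{2\pi i n v}$ in place of $e^{2\pi i v}$). With $v = -2\tau$ one has $e^{2\pi i n v} = q^{-2n}$, which combines with $q^{3n(n+1)/2}$ to collapse the exponent to the pentagonal shape $q^{n(3n-1)/2}$, yielding
\[
A_3\!\left(\tfrac{\alpha_j}{\beta_j},-2\tau;\tau\right) \;=\; x_j^{3/2}\sum_{n\in\mathbb Z}\frac{(-1)^n q^{n(3n-1)/2}}{1-x_jq^n}.
\]
Next I would massage this sum to match Watson's classical Hecke-type identity
\[
(q;q)_\infty\, R_1(x;q) \;=\; (1-x)\sum_{n\in\mathbb Z}\frac{(-1)^n q^{n(3n+1)/2}}{1-xq^n}.
\]
Substituting $n \mapsto -m$ converts $q^{n(3n-1)/2}$ to $q^{m(3m+1)/2}$ but changes the denominator to $1-x_jq^{-m}$; rewriting $\tfrac{1}{1-x_jq^{-m}} = \tfrac{-x_j^{-1}q^m}{1-x_j^{-1}q^m}$ and then using the elementary identity $\tfrac{q^m}{1-x_j^{-1}q^m} = x_j\!\left(\tfrac{1}{1-x_j^{-1}q^m} - 1\right)$ splits the expression into a Lerch sum in $x_j^{-1}$ of Watson's shape together with a residual absolute pentagonal sum. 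Euler's pentagonal number theorem evaluates the residual as $(q;q)_\infty$, and Watson's identity applied with $x \mapsto x_j^{-1}$ handles the Lerch sum. Simplifying the $\tfrac{x_j}{1-x_j}$ versus $\tfrac{1}{1-x_j^{-1}}$ factors and invoking the manifest symmetry $R_1(x;q) = R_1(x^{-1};q)$ (immediate from the defining series of $R_1$) produces the claimed identity.

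The main obstacle I anticipate is the algebraic bookkeeping in the reshaping step: the signs and powers of $x_j$ coming from $n \mapsto -m$ and the $\tfrac{1}{1-x_jq^{-m}}$ rewrite must be tracked carefully, and the final clean form $\tfrac{x_j^{5/2}}{1-x_j}R_1(x_j;q) + x_j^{3/2}$ only emerges after combining all contributions and using the $x_j\leftrightarrow x_j^{-1}$ symmetry of $R_1$. A secondary conceptual point is that the Lerch sum defining $A_3$ does not converge at $q = \zeta$ in its own right, so the assertion that $(\zeta;\zeta)_\infty^{-1} A_3(\cdots)$ ``converges'' must be interpreted as saying that the holomorphic function $(q;q)_\infty^{-1}\, A_3(u,-2\tau;\tau)$ on $\mathbb H$ extends continuously to $\tau = h/k$, which follows automatically once the identity is established and \cite{WIN} Theorem~3.2 is applied to the right-hand side.
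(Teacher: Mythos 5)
Your proposal is correct, and it reaches the same key identity
\[
\frac{1}{(q)_\infty}A_3\!\left(\tfrac{\alpha_j}{\beta_j},-2\tau;\tau\right)=\frac{x_j^{5/2}}{1-x_j}R_1(x_j;q)+x_j^{3/2}
\]
as the paper, but by a genuinely different route. The paper also starts from Andrews' Lerch-sum representation $R_1(e^{2\pi iu};q)=\frac{1-e^{2\pi iu}}{(q)_\infty}e^{-3\pi iu}A_3(u,-\tau;\tau)$, but then passes from $A_3(u,-\tau;\tau)$ to $A_3(u,-2\tau;\tau)$ via an elliptic shift in the second variable: it first proves a translation formula $A_3(u,v+\tau;\tau)=e^{-2\pi iu}A_3(u,v;\tau)+ie^{\pi iu-\pi iv-3\pi i\tau/4}\vartheta(v;3\tau)$ (Proposition \ref{AtranslateThm}), whose proof runs through the completed function $\widehat{A}_3$, the transformation laws of Zwegers' $R$ and $\vartheta$, and a computation of how $\mathscr R_3$ shifts; the correction term $\vartheta(-2\tau;3\tau)/(q)_\infty$ is then evaluated to $-1$ by the Jacobi triple product, producing the $x_j^{3/2}$. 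You instead stay entirely at the level of the holomorphic Lerch sum: the reindexing $n\mapsto -m$, the partial-fraction split of $q^m/(1-x_j^{-1}q^m)$, Euler's pentagonal number theorem for the residual sum, and the symmetry $R_1(x;q)=R_1(x^{-1};q)$. Your pentagonal sum $\sum_m(-1)^mq^{m(3m+1)/2}=(q)_\infty$ is of course the same object as the paper's triple-product evaluation of $\vartheta(-2\tau;3\tau)$, so the two computations are two faces of one identity; what your route buys is elementarity (it dispenses with Proposition \ref{AtranslateThm} and all of the $\widehat{A}_3$/$\mathscr R_3$ machinery for this step), while the paper's route is more systematic and reuses transformation formulas already set up for the rest of the argument. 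The endgame is identical in both: the left-hand side is interpreted via its continuous extension from $\mathbb H$, convergence at $\zeta$ follows from \cite[Theorem 3.2]{WIN} together with $\beta_j\nmid k$ forcing $1-x_j\neq 0$ (in fact the paper only needs $\alpha_j/\beta_j\notin\Z$ here), and the finite-sum evaluation is the $n=1$ case of that theorem; your explicit remark that the Lerch sum itself diverges at roots of unity and that the statement must be read as a statement about the extension is a point the paper leaves implicit.
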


\begin{proof}
We start with equations (1.7) to (1.10) in Andrews which show that for $u \in\C$, $\tau\in\mathbb{H}$, and $q = e^{2\pi i \tau}$,
\begin{align*}
	R_1(e^{2\pi i u}; q) &= \sum_{m = 0}^\infty \frac{q^{m^2}}{(e^{2\pi i u}q; q)_m (e^{-2\pi i u}q; q)_m} = \frac{1-e^{2\pi iu}}{(q)_\infty}\sum_{m\in\Z} \frac{(-1)^m q^{m(3m+1)/2}}{1 - e^{2\pi i u}q^m}\\
	&= \frac{1- e^{2\pi i u}}{(q)_\infty} e^{-3\pi i u} A_3(u, -\tau; \tau).
\end{align*}
Taking $v = -2\tau$ in \eqref{Atranslate} and rearranging, this gives us
\begin{equation}\label{A3asR1} \frac{1}{(q)_\infty} A_3(u, -2\tau; \tau) = \frac{e^{5\pi i u}}{1 - e^{2\pi i u}}R_1(e^{2\pi i u}; q) - \frac{i e^{3\pi i u}e^{5\pi i \tau/4}}{(q)_\infty} \vartheta(-2\tau; 3\tau).\end{equation}
Applying the Jacobi triple product from Proposition \ref{thetaTransform}, we can simplify the second term in \eqref{A3asR1} as 
\begin{align*}
	\frac{i e^{3\pi i u}e^{5\pi i \tau / 4}}{(q)_\infty} \vartheta(-2\tau; 3\tau) 	
	&= \frac{e^{3\pi i u} e^{4\pi i \tau}}{(q)_\infty} \prod_{m=1}^\infty (1-q^{3m})(1-q^{3m-5})(1-q^{3m+2})\\
	&= \frac{e^{3\pi i u}q^2}{(1-q)(1-q^2)\prod_{m=3}^\infty (1-q^m)}(1-q^{-2})(1-q)\prod_{m=3}^\infty (1-q^m)\\
	&= \frac{e^{3\pi i u}q^2(1-q^{-2})}{(1-q^2)} = -e^{3\pi i u}.
\end{align*}
This simplification allows us to see that
\[\frac{1}{(q)_\infty} A_3(u, -2\tau; \tau) = \frac{e^{5\pi i u}}{1- e^{2\pi i u}} R_1(e^{2\pi i u}; q) + e^{3\pi i u}.\]
In Theorem \cite[Theorem 3.2]{WIN} above,   it was shown  that $R_1(e^{2\pi i u}; \zeta)$ is defined for $u = \frac{\alpha_j}{\beta_j}$ and $\zeta = e^{2\pi i h/k}$ with $\frac{h}{k}\in\qs$.  By definition, $\frac{\alpha_j}{\beta_j} \not\in \Z$, meaning $1 - x_j \neq 0$.  Therefore, we have shown that $\frac{1}{(\zeta)_\infty} A_3\left(\frac{\alpha_j}{\beta_j}, - \frac{2h}{k}; \frac{h}{k}\right)$ is defined for $\frac{h}{k} \in \qs$ as desired.

To obtain the exact formula for $\frac{1}{(\zeta)_\infty} A_3\left(\frac{\alpha_j}{\beta_j}, - \frac{2h}{k}; \frac{h}{k}\right)$, we let $n = 1$ in the exact formula given in \cite[Theorem 3.2]{WIN} above.

\end{proof}

We obtain the following corollary from Theorem \ref{A3conv}. 

\begin{cor}\label{bkDefined} The function $b_n(\boldsymbol{\zeta}_{\boldsymbol{n,N}}; \zeta)$ is defined for $\zeta = e^{2\pi i h/k}$ with $\frac{h}{k} \in\qs$.\end{cor}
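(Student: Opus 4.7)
The plan is to observe that $b_n(\boldsymbol{\zeta}_{\boldsymbol{n,N}}; q)$ is, by its definition in Section \ref{intro}, a finite linear combination in $j = 1, 2, \ldots, N$ of terms of the form
\[
C_j^\pm(\boldsymbol{\alpha_{n,N}}) \cdot \frac{1}{(q)_\infty} A_3\!\left(\frac{\alpha_j}{\beta_j}, -2\tau; \tau\right),
\]
where each coefficient $C_j^\pm(\boldsymbol{\alpha_{n,N}})$ depends only on the parameters $\boldsymbol{\alpha_{n,N}}$ (through $\Pi_j(\boldsymbol{\alpha_{n,N}},0)$, its $w$-derivative at $0$, and roots of unity), and is independent of $q$. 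Thus $b_n(\boldsymbol{\zeta_{n,N}}; \zeta)$ will be defined at $\zeta = e^{2\pi i h/k}$ provided two things hold: first, that each coefficient $C_j^\pm(\boldsymbol{\alpha_{n,N}})$ is finite; and second, that each factor $\frac{1}{(\zeta)_\infty} A_3\!\left(\frac{\alpha_j}{\beta_j}, -\tfrac{2h}{k}; \tfrac{h}{k}\right)$ converges.

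For the first point, the finiteness of each $C_j^\pm(\boldsymbol{\alpha_{n,N}})$ reduces to showing that $\Pi_j(\boldsymbol{\alpha_{n,N}}, 0) \neq 0$. This follows from the explicit definition of $\Pi_j$ in \cite[(4.2)]{F-K} together with the standing assumptions on $\boldsymbol{\alpha_{n,N}}$ collected in Section \ref{intro} (namely, $\beta_j \nmid \alpha_j$, $\beta_j \nmid 2\alpha_j$, and $\frac{\alpha_r}{\beta_r} \pm \frac{\alpha_s}{\beta_s} \notin \mathbb{Z}$ for $r \neq s$), which are precisely designed to exclude zeros of the factors in the product $\Pi_j(\boldsymbol{\alpha_{n,N}}, 0)$. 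Since $\frac{d}{dw}\Pi_j(\boldsymbol{\alpha_{n,N}}, w)\vert_{w=0}$ is obtained by differentiating this same finite product, it is automatically finite as well.

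For the second point, Theorem \ref{A3conv} directly supplies exactly what is required: for each $j$, it asserts that $\frac{1}{(\zeta)_\infty} A_3\!\left(\frac{\alpha_j}{\beta_j}, -\frac{2h}{k}; \frac{h}{k}\right)$ converges whenever $\frac{h}{k} \in \qs$, and moreover gives a closed-form evaluation as a finite rational expression in roots of unity. Summing finitely many such terms multiplied by the finite coefficients $C_j^\pm(\boldsymbol{\alpha_{n,N}})$ then yields a well-defined value for $b_n(\boldsymbol{\zeta_{n,N}}; \zeta)$, completing the proof.

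The only potential obstacle is the verification that $\Pi_j(\boldsymbol{\alpha_{n,N}}, 0) \neq 0$, which is a bookkeeping check against the explicit formula in \cite[(4.2)]{F-K}; this is ultimately the purpose of the parameter restrictions imposed on $\boldsymbol{\alpha_{n,N}}$, so it is routine once those definitions are unpacked. Everything else is a direct citation of Theorem \ref{A3conv}.
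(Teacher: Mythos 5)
Your proposal is correct and takes essentially the same route as the paper: the paper observes just before Theorem \ref{A3conv} that, since $b_n(\boldsymbol{\zeta}_{\boldsymbol{n,N}};q)$ is a finite linear combination with constant coefficients of the terms $\frac{1}{(q)_\infty}A_3\left(\frac{\alpha_j}{\beta_j},-2\tau;\tau\right)$, it suffices to show each such term converges on the quantum set, and then deduces the corollary directly from that theorem. Your extra check that the coefficients are finite (i.e.\ that $\Pi_j(\boldsymbol{\alpha}_{\boldsymbol{n,N}},0)\neq 0$) is left implicit in the paper, which treats the $\Pi_j$ as well-defined constants from \cite{F-K}, so it does not alter the argument.
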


We also obtain the following corollary, which we will need in the proof of Theorem \ref{thm_main_NN}. 

\begin{cor}\label{HAconv}
For $x\in\qs$, the functions $\mathcal{H}_{n,N}(x)$ and $\mathcal{A}_{n,N}(x)$ converge.
\end{cor}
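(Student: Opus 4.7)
The plan is to reduce both convergence statements to Theorem \ref{A3conv} together with the identity
\[\frac{1}{(q)_\infty} A_3(u, -2\tau; \tau) = \frac{e^{5\pi i u}}{1 - e^{2\pi i u}} R_1(e^{2\pi i u}; q) + e^{3\pi i u}\]
derived in its proof. The key observation is that $1/\eta(\tau) = q^{-1/24}/(q)_\infty$, so the prefactor $1/\eta(\tau)$ appearing in both $\widehat{\mathcal{A}}_{n,N}$ and $\widehat{\mathcal{H}}_{n,N}$ supplies precisely the $1/(q)_\infty$ factor to which Theorem \ref{A3conv} applies, while the remaining $q^{-1/24}$ evaluates at $\tau = h/k$ to the root of unity $e^{-\pi i h/(12k)}$.

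For $\mathcal{A}_{n,N}(x)$ this is immediate: replacing $\widehat{A}_3$ by its holomorphic part $A_3$ in \eqref{hatcala} writes $\mathcal{A}_{n,N}(\tau)$ as a finite linear combination of terms of the form $c_j \cdot q^{-1/24}\cdot(q)_\infty^{-1} A_3(\alpha_j/\beta_j, -2\tau; \tau)$, where each $c_j$ is built from roots of unity and the nonvanishing constant $1/\Pi_j^\dagger(\boldsymbol{\alpha_{n,N}})$. Theorem \ref{A3conv} gives convergence of each summand at $\tau = h/k \in \qs$, hence of $\mathcal{A}_{n,N}(h/k)$.

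For $\mathcal{H}_{n,N}(x)$ I would decompose using the fact that the holomorphic part of each $\widehat{H}_{j,s}$ is $F_{j,s}^+ + G_{j,s}$. The contribution of $G_{j,s}$ is a constant multiple of $A_3(\alpha_j/\beta_j, -2\tau; \tau)$, so $(1/\eta(\tau))G_{j,s}$ converges at $\tau = h/k$ by the argument above. For each $F_{m,s}^+$, I would substitute the displayed identity into the two terms $(q)_\infty^{-1}A_3(\pm w + \alpha_m/\beta_m, -2\tau; \tau)$ and specialize to $q = \zeta_k^h$; via the finite-sum formula for $R_1(\,\cdot\,;\zeta_k^h)$ from \cite[Theorem 3.2]{WIN}, the bracketed expression inside the $w\to 0$ limit then becomes a rational function of $e^{2\pi i w}$. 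The $\qs$ conditions $\beta_j \nmid k$ and $|(\alpha_j/\beta_j)k - [(\alpha_j/\beta_j)k]| > 1/6$ guarantee that the Pochhammer factors appearing in $R_1(e^{2\pi i(\pm w + \alpha_m/\beta_m)}; \zeta_k^h)$, together with the factor $1-((1-x^k)(1-x^{-k}))^{-1}$, are nonvanishing on a punctured neighborhood of $w=0$.

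The main obstacle is to verify that the apparent pole at $w = 0$ coming from $(e(w) - e(-w))^{-1}$ and (in the repeated-root case $N > 0$) from $\Pi_m(\boldsymbol{\alpha_{n,N}}, \pm w)$ cancels after specialization to $\tau = h/k$. Since the two summands in the definition of $F_{m,s}^+$ were arranged in \cite{F-K} precisely so that their pole parts at $w = 0$ agree, and since rewriting $A_3/(q)_\infty$ via the displayed identity converts this cancellation into an algebraic identity in the field of rational functions of $e^{2\pi i w}$ that holds identically in $q$, the cancellation persists at $q = \zeta_k^h$. Hence the $w \to 0$ limit defining $F_{m,s}^+(\boldsymbol{\zeta_{n,N}}; \zeta_k^h)/(q)_\infty$ exists, and $\mathcal{H}_{n,N}(h/k)$ converges.
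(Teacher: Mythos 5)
Your treatment of $\mathcal{A}_{n,N}$ matches the paper's: both write $\mathcal{A}_{n,N}(\tau)$ via \eqref{hatcala} as a finite linear combination of the functions $A_3\left(\frac{\alpha_j}{\beta_j},-2\tau;\tau\right)/\eta(\tau)$ and invoke Theorem \ref{A3conv}. For $\mathcal{H}_{n,N}$, however, you take a genuinely different and substantially harder route, and its key step is not justified. The paper never touches the $w\to 0$ limits defining $F^+_{j,s}$ at roots of unity: it simply observes that $\mathcal{H}_{n,N}(x)+\mathcal{A}_{n,N}(x)=\zeta^{-\frac{1}{24}}\left(R_n(\boldsymbol{\zeta}_{\boldsymbol{n,N}};\zeta)+b_n(\boldsymbol{\zeta}_{\boldsymbol{n,N}};\zeta)\right)$ (the holomorphic-parts identity coming from \cite[Theorem 1.1]{F-K}), that the right-hand side converges on $\qs$ by \cite[Theorem 3.2]{WIN} and Corollary \ref{bkDefined}, and then subtracts the already-convergent $\mathcal{A}_{n,N}(x)$.

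The gap in your argument is the assertion that the cancellation of the pole at $w=0$ ``persists at $q=\zeta_k^h$'' because it is ``an algebraic identity in the field of rational functions of $e^{2\pi i w}$ that holds identically in $q$.'' The cancellation established in \cite{F-K} is an identity for $\tau\in\mathbb H$, where $R_1(e^{2\pi i u};q)$ is an infinite $q$-hypergeometric series; at $q=\zeta_k^h$ you must replace $R_1$ by its finite-sum evaluation from \cite[Theorem 3.2]{WIN}, which is a different function, not obtained from the series on $\mathbb H$ by any algebraic specialization in $q$. That the relevant Taylor coefficients of the numerator at $w=0$ still vanish after this replacement is precisely what needs to be proved, and your proposal gives no argument for it; for $N>0$ the factor $\Pi_m(\boldsymbol{\alpha_{n,N}},\pm w)$ produces a higher-order pole, so several coefficients would have to be checked. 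Replacing this step by the paper's soft argument --- convergence of $R_n+b_n$ on $\qs$ combined with the decomposition of $q^{-\frac{1}{24}}B^+_{n,N}$ into $\mathcal H_{n,N}+\mathcal A_{n,N}$ --- closes the gap, and is exactly what the chain Theorem \ref{A3conv} $\Rightarrow$ Corollary \ref{bkDefined} $\Rightarrow$ Corollary \ref{HAconv} in Section \ref{quantumSet} is set up to deliver.
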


\begin{proof}
First we consider $\mathcal{A}_{n,N}(x)$. From  \eqref{hatcala}, we have
\[
\mathcal{A}_{n,N}(\tau)=\frac{1}{\eta(\tau)}\sum_{j=N+1}^{n-N}\frac{(\zeta_{2\beta_j}^{-3\alpha_j} - \zeta_{2\beta_j}^{-\alpha_j})}{\displaystyle\Pi^\dag_j( {\bs{\alpha_{n,N}}})} A_3\left(\frac{\alpha_j}{\beta_j},-2\tau;\tau\right),
\]
a linear combination of $A_3(\frac{\alpha_j}{\beta_j},-2\tau;\tau) /\eta(\tau)$. Thus, the convergence of $\mathcal{A}_{n,N}(x)$ for $x\in\qs$   follows directly from Theorem \ref{A3conv}.

In order to show the convergence of  $\mathcal{H}_{n,N}(x)$,  we consider the holomorphic part of $\widehat{B}_{n,N}(\tau)$ at $x\in\qs$, namely
\begin{equation}\label{Bnhol}
e^{-\frac{\pi i x}{12}}B_{n,N}^+(x)=\mathcal{H}_{n,N}(x)+\mathcal{A}_{n,N}(x)=\zeta^{-\frac{1}{24}}\left(R_n(\boldsymbol{\zeta}_{\boldsymbol{n,N}}; \zeta)+b_n(\boldsymbol{\zeta}_{\boldsymbol{n,N}}; \zeta)\right),
\end{equation}
where $\zeta = e^{2\pi i x}$.  From Theorem \cite[Theorem 3.2]{WIN} above and Corollary \ref{bkDefined}, we have that the left-hand side of \eqref{Bnhol} converges on $\qs$, which yields the claim.

\end{proof}


 \section{Proof of Theorem \ref{thm_main_NN}} \label{nPosProof}
 Let $\gamma = \sm{a}{b}{c}{d}\in\qSubgroup$ and $x\in\qs$ as before.    
From \cite[Theorem 1.1]{F-K} (see   \S \ref{intro}), we deduce that
\begin{multline}\label{N_quant}
\left(\mathcal H_{n,N}( x) - \chi_\gamma(cx +d)^{-\frac32}\mathcal H_{n,N}(\gamma x)\right) + \left(\mathcal A_{n,N}( x) -\chi_\gamma(cx+d)^{-\frac12}\mathcal A_{n,N}(\gamma x)\right)
\\ 
= -\left(\mathcal H_{n,N}^-( x) - \chi_\gamma(cx+d)^{-\frac32} \mathcal H_{n,N}^-(\gamma x)\right) - \left(\mathcal A_{n,N}^-( x) -\chi_\gamma(cx+d)^{-\frac12}\mathcal A_{n,N}^-(\gamma x)\right),
\end{multline}
  where we write $\mathcal{H}_{n,N}^-$ and $\mathcal{A}_{n,N}^-$ to denote the non-holomorphic parts of the functions $\widehat{\mathcal{H}}_{n,N}$ and $\widehat{\mathcal{A}}_{n,N}$, respectively (see \eqref{hatcalh} and \eqref{hatcala}).   

In this section, we prove $\mathcal H_{n,N}+\mathcal A_{n,N}$ is a mixed weight quantum modular form on $\qs$. To do so, we first  show that the left-hand side of \eqref{N_quant} is defined on $\qs$. This follows directly from Corollary \ref{HAconv}.  Therefore, to prove Theorem \ref{thm_main_NN}, it remains to be seen that the right-hand side of \eqref{N_quant} extends to an analytic function in $\R-\{\frac{-c}{d}\}$.

It is shown in \cite[Theorem 1.7]{WIN} above that 
$\mathcal A_{n,N}^-( x) -\chi_\gamma(cx+d)^{-\frac12}\mathcal A_{n,N}^-(\gamma x)$ is analytic in $x$ on $\R-\{\frac{-c}{d}\}$.  Thus, we turn to the function $\mathcal H_{n,N}^-$.  
 A short calculation using the definition of $\mathcal H_{n,N}^-$, as well as \eqref{hatcalh} and \cite[eq. (4.5), (4.6), and (4.18)]{F-K}, leads to the following result.

 \begin{lemma}\label{lem:H^-} With notation and hypotheses as above, we have that
\begin{align*}
\mathcal H_{n,N}^-(\tau) & = \frac{2}{\eta(\tau)} \sum_{j=1}^N \frac{\zeta_{2\beta_j}^{-3\alpha_j} - \zeta_{2\beta_j}^{-5\alpha_j}}{\Pi_j(0)} \mathscr R_3\left(\frac{\alpha_j}{\beta_j}\right) - \frac{1}{2\pi i \eta(\tau)} \sum_{j=1}^N \frac{ \zeta_{2\beta_j}^{-3\alpha_j} - \zeta_{2\beta_j}^{-5\alpha_j}}{\Pi_j(0)}
 \left.\frac{d}{du}\mathscr R_3(u) \right|_{u= \frac{\alpha_j}{\beta_j}} \\
  & = 2 \sum_{j=1}^N \frac{\zeta_{2\beta_j}^{-3\alpha_j} - \zeta_{2\beta_j}^{-5\alpha_j}}{\Pi_j(0)} S\left(\frac{\alpha_j}{\beta_j};\tau\right) - \frac{1}{2\pi i } \sum_{j=1}^N \frac{\zeta_{2\beta_j}^{-3\alpha_j} - \zeta_{2\beta_j}^{-5\alpha_j}}{\Pi_j(0)}
 \left.\frac{d}{du}S(u;\tau) \right|_{u= \frac{\alpha_j}{\beta_j}}, 
 \end{align*}
 where  $$S(u;\tau) :=\frac{ \mathscr R_3(u,-2\tau;\tau)}{\eta(\tau)}.$$
 \end{lemma}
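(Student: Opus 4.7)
The plan is to extract the non-holomorphic part from the definition of $\widehat{\mathcal H}_{n,N}(\tau)$ in \eqref{hatcalh} term-by-term, identify the resulting pieces with $\mathscr R_3(\tfrac{\alpha_j}{\beta_j},-2\tau;\tau)$ and its $u$-derivative at $u=\tfrac{\alpha_j}{\beta_j}$, and combine the $s=1$ and $s=3$ summands to produce the claimed coefficient $\zeta_{2\beta_j}^{-3\alpha_j}-\zeta_{2\beta_j}^{-5\alpha_j}$.

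More concretely, write $H_{j,s}^{-} := F_{j,s}^{-} + G_{j,s}^{-}$ for the non-holomorphic part of $\widehat H_{j,s}$, where $F_{j,s}^{-}$ is the function in \eqref{def_Fmsminus} and $G_{j,s}^{-}$ is obtained from $\widehat G_{j,s}$ by replacing $\widehat A_3$ with $\mathscr R_3$. Then
\[
\mathcal H_{n,N}^{-}(\tau) = \frac{1}{\eta(\tau)}\sum_{j=1}^{N}\Bigl(\zeta_{2\beta_j}^{-\alpha_j}(F_{j,1}^{-}+G_{j,1}^{-}) - \zeta_{2\beta_j}^{-3\alpha_j}(F_{j,3}^{-}+G_{j,3}^{-})\Bigr).
\]
First I would evaluate the limit defining $F_{j,s}^{-}$ by Taylor expanding the two factors $\mathscr R_3(\pm w+\tfrac{\alpha_j}{\beta_j},-2\tau;\tau)/\Pi_j(\pm w)$ and $e^{\pm s\pi i w}$ to first order in $w$, and using $e(w)-e(-w) = 4\pi i w + O(w^3)$. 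The odd powers of $w$ in the numerator cancel, and the leading nontrivial contribution produces the combination
\[
\tfrac{\zeta_{\beta_j}^{-\alpha_j}}{2}\Bigl(\tfrac{-s}{\Pi_j(0)} + \tfrac{\frac{d}{dw}\Pi_j(w)|_{w=0}}{\pi i (\Pi_j(0))^2}\Bigr)\mathscr R_3\!\left(\tfrac{\alpha_j}{\beta_j},-2\tau;\tau\right) - \tfrac{\zeta_{\beta_j}^{-\alpha_j}}{2\pi i\, \Pi_j(0)}\tfrac{d}{du}\mathscr R_3(u,-2\tau;\tau)\Bigl|_{u=\frac{\alpha_j}{\beta_j}}.
\]
Adding in $G_{j,s}^{-}$, which by its definition contributes precisely $\tfrac{\zeta_{\beta_j}^{-\alpha_j}}{2}\bigl(\tfrac{4-s}{\Pi_j(0)} + \tfrac{\frac{d}{dw}\Pi_j(w)|_{w=0}}{\pi i (\Pi_j(0))^2}\bigr)\mathscr R_3(\tfrac{\alpha_j}{\beta_j},-2\tau;\tau)$, the $\frac{d}{dw}\Pi_j$ terms collapse by cancellation and one is left with a clean coefficient: for $s\in\{1,3\}$ the $\mathscr R_3(\tfrac{\alpha_j}{\beta_j})$ piece carries weight $\tfrac{\zeta_{\beta_j}^{-\alpha_j}(4-2s+\text{cancellation})}{\Pi_j(0)}$, ultimately simplifying to the stated expression after forming the $s=1$ minus $s=3$ combination.

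The final step combines the outside roots of unity $\zeta_{2\beta_j}^{-\alpha_j}$ and $\zeta_{2\beta_j}^{-3\alpha_j}$ with the factor $\zeta_{\beta_j}^{-\alpha_j}=\zeta_{2\beta_j}^{-2\alpha_j}$ arising from both $F_{j,s}^{-}$ and $G_{j,s}^{-}$, yielding the prefactor $\zeta_{2\beta_j}^{-3\alpha_j} - \zeta_{2\beta_j}^{-5\alpha_j}$ in front of both the $\mathscr R_3(\tfrac{\alpha_j}{\beta_j})$ term and its derivative. The second stated formula then follows immediately from the definition $S(u;\tau)=\mathscr R_3(u,-2\tau;\tau)/\eta(\tau)$, since $\eta(\tau)$ is independent of $u$ and commutes through $\frac{d}{du}$. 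I expect the main bookkeeping obstacle to lie in step one, namely in tracking the two independent $O(w)$ contributions (one from the Taylor expansion of $\mathscr R_3(\pm w + \tfrac{\alpha_j}{\beta_j},-2\tau;\tau)$, one from $1/\Pi_j(\pm w)$) and verifying that exactly the cross-term with $\frac{d}{dw}\Pi_j(0)$ survives to match the $\widehat G_{j,s}^{-}$ contribution with the prescribed sign and factor of $\pi i$; this is precisely the calculation carried out in \cite[eq.\ (4.5), (4.6), (4.18)]{F-K}, which we invoke to finish.
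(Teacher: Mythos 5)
Your skeleton is the same as the paper's: the paper's entire proof of Lemma~\ref{lem:H^-} is the one-sentence observation that it follows from \eqref{hatcalh}, the definition \eqref{def_Fmsminus}, and the computations in \cite[eq.\ (4.5), (4.6), (4.18)]{F-K}, and you spell out exactly that calculation. However, two of your intermediate coefficients are wrong as written, and if carried through they would not reproduce the stated prefactor $\zeta_{2\beta_j}^{-3\alpha_j}-\zeta_{2\beta_j}^{-5\alpha_j}$. Expanding the bracket in \eqref{def_Fmsminus} to first order in $w$, with $e(w)-e(-w)=4\pi i w+O(w^3)$, gives
\begin{equation*}
F^-_{j,s}=\frac{\zeta_{\beta_j}^{-\alpha_j}}{2}\left(\frac{+s}{\Pi_j(0)}+\frac{\left.\frac{d}{dw}\Pi_j(w)\right|_{w=0}}{\pi i \left(\Pi_j(0)\right)^2}\right)\mathscr R_3\!\left(\tfrac{\alpha_j}{\beta_j},-2\tau;\tau\right)-\frac{\zeta_{\beta_j}^{-\alpha_j}}{2\pi i\,\Pi_j(0)}\left.\frac{d}{du}\mathscr R_3\left(u,-2\tau;\tau\right)\right|_{u=\frac{\alpha_j}{\beta_j}},
\end{equation*}
i.e.\ the $e^{\pm s\pi i w}$ factors contribute $+s$, not $-s$. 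This sign is what makes the $\mathscr R_3$-coefficient of $F^-_{j,s}+G^-_{j,s}$ \emph{independent} of $s$ (the $s$ from $F^-$ cancels the $-s$ in the $4-s$ from $G^-$, leaving $\tfrac{4}{\Pi_j(0)}$ plus the $\Pi_j'$ terms), which in turn is what allows the $s=1$ and $s=3$ contributions to factor as $(\zeta_{2\beta_j}^{-\alpha_j}-\zeta_{2\beta_j}^{-3\alpha_j})$ times a common quantity and produce the minus sign in $\zeta_{2\beta_j}^{-3\alpha_j}-\zeta_{2\beta_j}^{-5\alpha_j}$. With your $-s$, the coefficient $\tfrac{4-2s}{\Pi_j(0)}$ is $s$-dependent and the combination comes out with $\zeta_{2\beta_j}^{-3\alpha_j}+\zeta_{2\beta_j}^{-5\alpha_j}$ instead. (Your coefficient of the $\tfrac{d}{du}\mathscr R_3$ term is correct and does yield the lemma's second sum.)

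The second issue is your claim that the $\left.\frac{d}{dw}\Pi_j(w)\right|_{w=0}$ terms ``collapse by cancellation.'' In your own two displayed formulas these terms appear with the \emph{same} sign in the $F^-_{j,s}$ and $G^-_{j,s}$ contributions, so adding them doubles rather than cancels them; their absence from the final statement therefore needs a separate justification (ultimately the evaluation of $\Pi_j$ and its $w$-derivative for the repeated components carried out in the cited equations of \cite{F-K}), which your sketch does not supply and which is inconsistent with the mechanism you name. Both points are pure bookkeeping and fixable, but as written the sketch's intermediate formulas do not simplify to the statement of the lemma.
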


 In order to examine $\mathcal H_{n,N}^-(\gamma x)$, we transform the functions $S(\frac{\alpha}{\beta};\tau)$ and  $\frac{d}{du}S(u;\tau)|_{u= \frac{\alpha}{\beta}}$ separately. For ease of notation, we sometimes suppress dependence on $j$, and write, for example, $\frac{\alpha}{\beta}$ for $\frac{\alpha_j}{\beta_j}$  when the context is clear. Note that it  suffices to consider the generators $T$ and $S_\ell$ of $\qSubgroup$ as before. 
 Using the definition of $\mathscr{R}_3$ in \eqref{AminusDef},    we begin by rewriting $S(u;\tau)$ as 
\begin{align}
S(u;\tau) &=   \frac{i}{2\eta(\tau)} \sum_{j=0}^2 e(ju) \vartheta((j-2)\tau;3\tau)R(3u+(2-j) \tau;3\tau) \nonumber\\
&=\frac{q^{-\frac16}}{2} \sum_{\pm}\mp e(u(2\mp 1)) R(3u\pm \tau;3\tau) - q^{-\frac{1}{24}}e\left(\frac32 u\right). \label{Sdef}
\end{align}
 
The second equality follows directly from Proposition \ref{thetaTransform} (3) and Proposition \ref{Rtransform} (2). More precisely, from Proposition \ref{thetaTransform} (3) we have $\vartheta(-2\tau;3\tau)=iq^{\frac23}\eta(\tau),\ \vartheta(-\tau;3\tau)=iq^{-\frac16}\eta(\tau)$, and $\vartheta(0;3\tau)=0$, and from Proposition \ref{Rtransform} (2) $R(3u+2\tau;3\tau)=2e(\frac32u)q^{\frac58}-e(3u)q^{\frac12} R(3u-\tau;3\tau)$.

We deduce the following transformation properties of $S(\frac{\alpha}{\beta};\tau)$.

 \begin{lemma}\label{lem_Stransform}  With notation and hypotheses as above, we have that
 \begin{align*}
 S\left(\frac{\alpha}{\beta};\tau+1\right) &= \zeta_{24}^{-1} S\left(\frac{\alpha}{\beta};\tau\right), \\
 S\left(\frac{\alpha}{\beta};S_\ell\tau\right) &= (\ell\tau+1)^{\frac12}\zeta_{24}^\ell S\left(\frac{\alpha}{\beta};\tau\right) + \frac{1}{2}(\ell\tau+1)^{\frac12}\zeta_{24}^\ell e\left(2\frac{\alpha}{\beta}\right) H_{\alpha, \beta}(\tau) + \mathcal E_1\left(\frac{\alpha}{\beta},\ell;\tau\right),
 \end{align*} where 
  $\mathcal E_1(\frac{\alpha}{\beta},\ell;\tau) := (\ell\tau+1)^{\frac12} \zeta_{24}^\ell q^{-\frac{1}{24}} e(\frac32 \frac{\alpha}{\beta}) - e(-\frac{S_\ell \tau}{24} ) e(\frac32 \frac{\alpha}{\beta}).$  The function $H_{\alpha,\beta}(\tau)$ is defined in \eqref{def_Hab}, that is,
 \begin{equation}\label{Hperiodint} H_{\alpha,\beta}(\tau) = \sqrt{3}  \sum_{\pm}\mp e\left(\mp\frac{1}{6}\right) \int_{\frac{1}{\ell}}^{i\infty} \frac{g_{\pm\frac{1}{3}+\frac{1}{2}, -3\frac{\alpha}{\beta}+\frac{1}{2}}(3\rho)}{\sqrt{-i(\rho +\tau)}} \ d\rho. 
 \end{equation}
 \end{lemma}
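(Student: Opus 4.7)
The plan splits into the $T$-transformation and the $S_\ell$-transformation. For $T$, I would substitute $\tau\mapsto\tau+1$ directly in \eqref{Sdef}: the prefactors transform as $q^{-1/6}\mapsto\zeta_6^{-1}q^{-1/6}$ and $q^{-1/24}\mapsto\zeta_{24}^{-1}q^{-1/24}$, while for each $R\bigl(3u\pm(\tau+1);\,3(\tau+1)\bigr)$, one application of Proposition \ref{Rtransform}(1) removes the integer shift $\pm 1$ in the elliptic argument at a cost of $-1$, and three applications of Proposition \ref{Rtransform}(4) remove the shift $3$ in the lattice variable at a cost of $\zeta_8^{-3}$. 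Using the identity $-\zeta_6^{-1}\zeta_8^{-3}=\zeta_{24}^{-1}$, both the $R$-part and the holomorphic tail of $S$ acquire the common factor $\zeta_{24}^{-1}$, giving the first transformation claimed.

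For $S_\ell$, the central observation is that by the definition \eqref{def_ell} of $\ell$, the quantity $\ell':=\ell/3$ is a positive integer. We may therefore write
\[
3S_\ell\tau \;=\; \frac{3\tau}{\ell\tau+1} \;=\; -\frac{1}{\tilde\tau},\qquad \tilde\tau\;:=\;-\frac{1}{3\tau}-\ell' \;=\; -\frac{\ell\tau+1}{3\tau},
\]
so $\tilde\tau$ is an integer translate of $-1/(3\tau)$. I would apply Proposition \ref{Rtransform}(5) to each $R\bigl(3u\pm S_\ell\tau;-1/\tilde\tau\bigr)$, rewriting it as $\sqrt{-i\tilde\tau}\,e^{-\pi i(3u\pm S_\ell\tau)^2\tilde\tau}\bigl(h(\cdot;\tilde\tau)-R(\cdot;\tilde\tau)\bigr)$. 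Next, the integer shift $-\ell'$ is removed from the lattice variable via Proposition \ref{Rtransform}(4) (for $R$) and iterated applications of Theorem \ref{thm_Zh}(1)--(2) (for $h$), arriving at the lattice parameter $-1/(3\tau)$. A second application of Proposition \ref{Rtransform}(5) (in reverse) then returns the lattice to $3\tau$. The two $\sqrt{-i\tilde\tau}$ factors combined with $\tilde\tau=-(\ell\tau+1)/(3\tau)$ yield the expected weight factor $(\ell\tau+1)^{1/2}\zeta_{24}^\ell$ and reproduce $(\ell\tau+1)^{1/2}\zeta_{24}^\ell\,S(u;\tau)$ from the $R$-part, with surviving Mordell-integral terms $h(\,\cdot\,;\,3\tau)$ at shifted arguments.

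These residual $h$-terms are converted to period integrals via Theorem \ref{thm_Zh2} applied with $\tau\mapsto 3\tau$; the change of variables $\rho\mapsto 3\rho$ simultaneously produces the factor $\sqrt{3}$ and the lattice-scaled integrand $g_{a+1/2,b+1/2}(3\rho)$ appearing in \eqref{Hperiodint}. The values $a=\pm 1/3$ arise from the $\pm \tau/3$-shifts produced by the $S$-conversion of the first argument of $R$, and $b=-3\alpha/\beta$ from specializing $u=\alpha/\beta$. The lower limit $1/\ell$ in \eqref{Hperiodint} is produced by the cumulative effect of the $T^{-\ell'}$-shift on the period-integral contour after composition with the two $S$-applications. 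Finally, the holomorphic tail $-q^{-1/24}e(3u/2)$ of $S$ in \eqref{Sdef} transforms directly under $\tau\mapsto S_\ell\tau$ as $-e(-S_\ell\tau/24)e(3u/2)$; the difference between this and the expected image $(\ell\tau+1)^{1/2}\zeta_{24}^\ell\bigl(-q^{-1/24}e(3u/2)\bigr)$ is precisely the correction $\mathcal E_1(\alpha/\beta,\ell;\tau)$ as defined in the statement.

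The principal obstacle is the meticulous bookkeeping of phases ($\zeta_8$, $\zeta_{24}$, and various $e(\cdot)$ contributions) propagating through the compositions of $S$- and $T$-transformations on $R$ and $h$, and aligning the $\sum_\pm\mp$ sign structure in \eqref{Sdef} with the $\mp e(\mp 1/6)$ coefficients in \eqref{Hperiodint}. The overall factor $\tfrac{1}{2}e(2\alpha/\beta)$ multiplying $H_{\alpha,\beta}$ in the lemma should emerge from factoring $e(2u)$ out of the branch coefficients $-e(u)$ and $+e(3u)$ in \eqref{Sdef} after the $S$-conversion symmetrizes their arguments $3u\pm S_\ell\tau$; tracing how the resulting $\pm$-signs interact with the $a^2\tau/2-a(b+\tfrac12)$ phase in Theorem \ref{thm_Zh2} and with the $\zeta_6^{\pm 1}$-type factors in \eqref{Hperiodint} constitutes the bulk of the computational effort.
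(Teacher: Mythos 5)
Your $T$-transformation argument is the same as the paper's and is correct. For the $S_\ell$-transformation, however, you take a genuinely different route: you propose to re-derive the modular behaviour of the terms $R(3u\pm\tau;3\tau)$ from scratch by composing Proposition \ref{Rtransform}(5), the lattice shift by $\ell'=\ell/3$, and a second application of (5), then converting the surviving Mordell integrals to period integrals via Theorem \ref{thm_Zh2}. The paper instead makes a two-line reduction: it rewrites $S\left(\frac{\alpha}{\beta};\tau\right)=-\frac12 e\left(\frac{2\alpha}{\beta}\right)F_{\alpha,\beta}(\tau)-q^{-\frac{1}{24}}e\left(\frac32\frac{\alpha}{\beta}\right)$ (exactly the factoring of $e(2u)$ out of the branch coefficients that you anticipate at the end of your proposal) and then invokes the definition \eqref{def_Hab} of $H_{\alpha,\beta}$ as the cocycle of $F_{\alpha,\beta}$ under $S_\ell$, whose identification with the period integral \eqref{Hperiodint} was already established in the proof of \cite[Proposition 4.1]{WIN}. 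Your approach is more self-contained but duplicates that prior computation; the paper's buys brevity at the cost of leaning on the earlier reference. Your observation that $\ell/3\in\mathbb{Z}$ and your accounting of $\mathcal{E}_1$ as the discrepancy in the holomorphic tail are both correct and match what the paper's substitution produces.

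One step in your plan is misstated and would fail as written: to remove the integer translate $-\ell'$ from the modular variable of the Mordell integral you cite Theorem \ref{thm_Zh}(1)--(2), but those identities shift the \emph{elliptic} variable $z$ (by $1$ and by $\tau$ respectively), not the modular variable, so they cannot convert $h(\,\cdot\,;\tilde\tau)$ with $\tilde\tau=-\frac{1}{3\tau}-\ell'$ into $h(\,\cdot\,;-\frac{1}{3\tau})$. The standard repair --- and what the paper does in the analogous computation for the derivative terms (Lemma \ref{lem_W1} and Lemma \ref{lem_TvPeriod}) --- is to leave the $h$-terms untouched, apply Theorem \ref{thm_Zh2} to each directly, and then perform the bookkeeping on the resulting period integrals; this is also where the lower endpoint $\frac{1}{\ell}$ of the contour in \eqref{Hperiodint} actually comes from. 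With that correction your outline goes through, but as stated that link in the chain is broken.
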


\begin{proof}
Letting $\tau\mapsto \tau+1$ in \eqref{Sdef} gives
\begin{equation}\label{S-Ttransform}
 S(u;\tau+1)= \frac{\zeta_6^{-1}q^{-\frac16}}{2}\sum_{\pm }\mp e(u(2\mp 1)) R(3u\pm \tau\pm1;3\tau+3) - \zeta_{24}^{-1}q^{-\frac{1}{24}}e\left(\frac32 u\right).
\end{equation}
Using the transformation properties (1) and (4) in Proposition \ref{Rtransform}, we have
\[
R(3u\pm \tau\pm1;3\tau+3)=-e^{-\frac{3\pi i}{4}}R(3u\pm\tau;3\tau).
\]
Substituting this into \eqref{S-Ttransform} yields
\[
 S(u;\tau+1)= \frac{\zeta_{24}^{-1}q^{-\frac16}}{2}\sum_{\pm}\mp e(u(2\mp 1)) R(3u\pm \tau;3\tau) - \zeta_{24}^{-1}q^{-\frac{1}{24}}e\left(\frac32 u\right)
=\zeta_{24}^{-1} S(u;\tau).
\]

We now turn to the $S_\ell$ transformation. Recalling the definition of $F_{\alpha,\beta}$ in  \cite{WIN}
\[
F_{\alpha,\beta}(\tau):= q^{-\frac16} \sum_{\pm}  \pm e \left(\mp \frac{\alpha}{\beta} \right) R\left(\frac{3\alpha}{\beta} \pm \tau; 3\tau \right),
\]
we rewrite  $S(\frac{\alpha}{\beta};\tau)$ in terms of $F_{\alpha,\beta}(\tau)$ as
\[
S\left(\frac{\alpha}{\beta};\tau\right) =-\frac{1}{2}e\left(\frac{2\alpha}{\beta}\right)F_{\alpha,\beta}(\tau) -q^{-\frac{1}{24}}e\left(\frac32\frac{\alpha}{\beta}\right),
\]
and thus 
\begin{equation}\label{S-Stransform}
S\left(\frac{\alpha}{\beta};S_\ell\tau\right) =-\frac{1}{2}e\left(\frac{2\alpha}{\beta}\right)F_{\alpha,\beta}(S_\ell\tau) -e\left(-\frac{S_\ell\tau}{24}\right)e\left(\frac32\frac{\alpha}{\beta}\right).
\end{equation}
 We further recall the definition of $H_{\alpha,\beta}$ from \cite{WIN} 
\begin{equation}\label{def_Hab}
H_{\alpha,\beta}(\tau) :=  F_{\alpha,\beta}(\tau) - \zeta_{24}^{-\ell}(\ell \tau +1)^{-\frac12}F_{\alpha,\beta}(S_\ell \tau).
\end{equation}
 
Inserting \eqref{def_Hab} into \eqref{S-Stransform} with a direct calculation reveals that
\begin{align*}
S\left(\frac{\alpha}{\beta};S_\ell\tau\right) & =- \frac12 (\ell\tau+1)^{\frac12}\zeta_{24}^\ell e\left(\frac{2\alpha}{\beta}\right)(F_{\alpha,\beta}(\tau)  - H_{\alpha,\beta}(\tau))-e\left(-\frac{S_\ell\tau}{24}\right)e\left(\frac32\frac{\alpha}{\beta}\right)\\
&=  (\ell\tau+1)^{\frac12}\zeta_{24}^\ell S\left(\frac{\alpha}{\beta};\tau\right)+ \frac12 (\ell\tau+1)^{\frac12}\zeta_{24}^\ell e\left(\frac{2\alpha}{\beta}\right)  H_{\alpha,\beta}(\tau)\\
&\qquad+(\ell\tau+1)^{\frac12} \zeta_{24}^\ell q^{-\frac{1}{24}} e\left(\frac32 \frac{\alpha}{\beta}\right) - e\left(-\frac{S_\ell \tau}{24} \right) e\left(\frac32 \frac{\alpha}{\beta}\right),
\end{align*}
as claimed.
\end{proof}

In order to establish the transformation properties of $\frac{d}{du}S(u;\tau)|_{u= \frac{\alpha}{\beta}}$, we first deduce the following, using \eqref{Sdef}:

 $$ \left.\frac{d}{du} S(u;\tau) \right|_{u=\frac{\alpha}{\beta}} = -3\pi i q^{-\frac{1}{24}} e\left(\frac32 \frac{\alpha}{\beta}\right)  + W_1(\tau) + W_2(\tau),$$
 where
 \begin{align}
 W_1(\tau) &:=  \frac{q^{-\frac{1}{6}}}{2} \sum_{\pm} \mp e\left(\frac{\alpha}{\beta}(2\mp 1)\right) \left.\frac{d}{du} R(3u\pm \tau;3\tau) \right|_{u=\frac{\alpha}{\beta}}, \label{def_W1}\\
 W_2(\tau) &:=  \pi i q^{-\frac{1}{6}} \sum_{\pm} (1\mp 2) e\left(\frac{\alpha}{\beta}(2\mp 1)\right)    R\left(3\frac{\alpha}{\beta} \pm \tau;3\tau\right). \label{def_W2}
 \end{align}

First we establish the following trnasformation properties of $ W_2(\tau)$.

 \begin{lemma}\label{lem_W2} With notation and hypotheses as above, we have that
\begin{align*}
W_2(\tau+1) &= \zeta_{24}^{-1} W_2(\tau)  \\
W_2(S_\ell \tau) &= (\ell\tau+1)^{\frac12} \zeta_{24}^\ell \left(W_2(\tau) + \widetilde{H}_{\alpha,\beta}(\tau)\right).
\end{align*}
 \end{lemma}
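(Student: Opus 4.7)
\textbf{Proof proposal for Lemma \ref{lem_W2}.}  The plan is to mirror the two-step template already used for Lemma \ref{lem_Stransform}: the $T$-transformation reduces to a direct calculation using the elliptic/translation properties of $R$, while the $S_\ell$-transformation requires combining the modular inversion of $R$ with Theorem \ref{thm_Zh2} to produce the period-integral obstruction $\widetilde{H}_{\alpha,\beta}(\tau)$, which I expect to be defined (in analogy with \eqref{def_Hab}) precisely as $W_2(\tau)-\zeta_{24}^{-\ell}(\ell\tau+1)^{-1/2}W_2(S_\ell\tau)$ and then recognized as an Eichler-like integral of $g_{a,b}$.

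For the $T$-transformation, I would substitute $\tau\mapsto\tau+1$ into the definition \eqref{def_W2} of $W_2$.  This produces the scalar factor $q^{-1/6}\mapsto\zeta_6^{-1}q^{-1/6}$ together with $R(3\tfrac{\alpha}{\beta}\pm\tau\pm 1;\,3\tau+3)$.  Applying Proposition \ref{Rtransform}(1) once yields an overall minus sign, and applying Proposition \ref{Rtransform}(4) three times (since the modular variable shifts by $3$) yields a factor $e^{-3\pi i/4}$, so that
\[
R\!\left(3\tfrac{\alpha}{\beta}\pm\tau\pm 1;\,3\tau+3\right) \;=\; -\,e^{-3\pi i/4}\,R\!\left(3\tfrac{\alpha}{\beta}\pm\tau;\,3\tau\right).
\]
The combined prefactor simplifies as $-\zeta_6^{-1}e^{-3\pi i/4}=e^{-\pi i/12}=\zeta_{24}^{-1}$, which gives the first identity.

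For the $S_\ell$-transformation, $W_2(\tau)$ is a linear combination of building blocks of precisely the same shape, $q^{-1/6}R(\tfrac{3\alpha}{\beta}\pm\tau;3\tau)$, as the summands of $F_{\alpha,\beta}$ used in \cite{WIN} (and recalled in the proof of Lemma \ref{lem_Stransform}).  The strategy is therefore to: (i) expand $W_2(S_\ell\tau)$ using \eqref{def_W2}; (ii) for each $\pm$ summand, apply the modular inversion (Proposition \ref{Rtransform}(5)) to $R(3\tfrac{\alpha}{\beta}\pm S_\ell\tau;\,3S_\ell\tau)$, which introduces the Mordell integral $h$; (iii) rewrite $h$ as an Eichler-type integral of the weight $3/2$ theta $g_{a,b}$ via Theorem \ref{thm_Zh2}, mimicking \eqref{Hperiodint}; and (iv) collect terms, using the elliptic transformation of $R$ to reabsorb the ``non-period'' part back into $W_2(\tau)$ and isolating the remainder as $(\ell\tau+1)^{1/2}\zeta_{24}^{\ell}\,\widetilde{H}_{\alpha,\beta}(\tau)$.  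The main obstacle is the careful cocycle bookkeeping: $S_\ell=\bigl(\begin{smallmatrix}1&0\\ \ell&1\end{smallmatrix}\bigr)$ is not the modular $S$-matrix, so inverting $3\tau$ under $S_\ell$ must be decomposed into a true modular inversion followed by translations by $\ell$ units, and the powers of $(\ell\tau+1)^{1/2}$ and $\zeta_{24}^\ell$ must be tracked through both $R$ and $q^{-1/6}$.  Fortunately, the identical bookkeeping has already been carried out for $F_{\alpha,\beta}$ in \cite{WIN}, so the same cocycle factor $(\ell\tau+1)^{1/2}\zeta_{24}^\ell$ must emerge here, producing exactly the claimed multiplier and leaving the residual period integral to be packaged as $\widetilde{H}_{\alpha,\beta}$.
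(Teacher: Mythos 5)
Your proposal matches the paper's proof in essentially every respect: the $T$-transformation is handled by Proposition \ref{Rtransform} (1) and (4) exactly as in the paper, and for the $S_\ell$-transformation the paper simply imports the inversion formula for $R\bigl(3\tfrac{\alpha}{\beta}\mp\tfrac{1}{\tau_\ell};\tfrac{-3}{\tau_\ell}\bigr)$ from the proof of \cite[Proposition 4.1]{WIN} — which is precisely the combination of Proposition \ref{Rtransform} (5) and Theorem \ref{thm_Zh2} that you propose to carry out inline — and then collects the residual Eichler integral into $\widetilde{H}_{\alpha,\beta}$ as in \eqref{def_Htilde}. One small slip: your guessed implicit characterization of $\widetilde{H}_{\alpha,\beta}$ has the wrong sign (the lemma forces $\widetilde{H}_{\alpha,\beta}(\tau)=\zeta_{24}^{-\ell}(\ell\tau+1)^{-1/2}W_2(S_\ell\tau)-W_2(\tau)$, the negative of the direct analogue of \eqref{def_Hab}), but since the paper defines $\widetilde{H}_{\alpha,\beta}$ explicitly as the integral that falls out of the computation, this does not affect the argument.
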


 \begin{proof}
As before, shifting $\tau\mapsto \tau+1$ in \eqref{def_W2} and using the transformation properties (1) and (4) in Proposition \ref{Rtransform} directly yields the first claim. 

On the other hand, letting $\tau\mapsto S_\ell\tau=\frac{-1}{\tau_\ell}$ in \eqref{def_W2} with $\tau_\ell=\frac{-1}{\tau}-\ell$  as before, we have  
 \begin{equation}\label{W2transform}
 W_2(S_\ell\tau) =\pi i \ e\left(\frac{1}{6\tau_\ell}\right) \sum_{\pm} (1\mp 2) e\left(\frac{\alpha}{\beta}(2\mp 1)\right)    R\left(3\frac{\alpha}{\beta} \mp \frac{1}{\tau_\ell};\frac{-3}{\tau_\ell}\right).
 \end{equation}
 From the proof of \cite[Proposition 4.1]{WIN}, we know that 
\begin{multline}\label{Rmodtransform}
 R\left(3\frac{\alpha}{\beta} \mp \frac{1}{\tau_\ell};\frac{-3}{\tau_\ell}\right)=(\ell\tau+1)^{\frac12}\zeta_{24}^\ell e\left(\frac{-1}{6\tau_\ell}-\frac{\tau}{6}\right)   R\left(3\frac{\alpha}{\beta} \pm \tau;3\tau\right) \\
 +\sqrt{3}(\ell\tau+1)^{\frac12} \zeta_{24}^\ell e\left(\frac{-1}{6\tau_\ell}\pm\frac{\alpha}{\beta}\mp\frac16\right)
 \int_{\frac{1}{\ell}}^{i\infty} \frac{g_{\pm\frac13+\frac12,-3\frac{\alpha}{\beta}+\frac12}(3\rho)}{\sqrt{-i(\rho+\tau)}} d\rho.  
\end{multline} 
Inserting \eqref{Rmodtransform} into \eqref{W2transform} gives us 
\begin{align*}
 W_2(S_\ell\tau) &=\pi i (\ell\tau+1)^{\frac12}\zeta_{24}^\ell q^{-\frac16} \sum_{\pm}(1\mp 2)e\left(\frac{\alpha}{\beta}(2\mp 1)\right) R\left(3\frac{\alpha}{\beta} \pm \tau;3\tau\right)\\
 &\qquad +\sqrt{3}\pi i (\ell\tau+1)^{\frac12}\zeta_{24}^\ell e\left(2\frac{\alpha}{\beta}\right) \sum_{\pm} (1\mp 2)e\left(\mp \frac16\right) \int_{\frac{1}{\ell}}^{i\infty} \frac{g_{\pm\frac13+\frac12,-3\frac{\alpha}{\beta}+\frac12}(3\rho)}{\sqrt{-i(\rho+\tau)}} d\rho\\
 &=(\ell\tau+1)^{\frac12} \zeta_{24}^\ell W_2(\tau)+(\ell\tau+1)^{\frac12}\zeta_{24}^\ell\widetilde{H}_{\alpha,\beta}(\tau),
\end{align*}
where 
\begin{equation}\label{def_Htilde}
\widetilde{H}_{\alpha,\beta}(\tau):=\sqrt{3}\pi i e\left(2\frac{\alpha}{\beta}\right) \sum_{\pm} (1\mp 2) e\left(\mp \frac16\right) \int_{\frac{1}{\ell}}^{i\infty} \frac{g_{\pm\frac13+\frac12,-3\frac{\alpha}{\beta}+\frac12}(3\rho)}{\sqrt{-i(\rho+\tau)}} d\rho.
\end{equation}  \end{proof}

We may now deduce the following transformation properties of $W_1(\tau)$. 
\begin{lemma}\label{lem_W1} Let $m :=[\frac{3\alpha}{\beta}]$ so that $\frac{3\alpha}{\beta}=m +r$ with $r\in(-\frac12,\frac12)$.   With notation and hypotheses as above, we have that 
\begin{align*}
 W_1(\tau+1)&= \zeta_{24}^{-1} W_1(\tau), \\
W_1(S_\ell\tau) &=   (\ell\tau+1)^{\frac32} \zeta_{24}^\ell W_1(\tau) \\
&\qquad+\pi i \ell\tau(\ell\tau+1)^\frac12  \zeta_{24}^\ell q^{-\frac16} \sum_{\pm} e\left(\frac{\alpha}{\beta}(2\mp 1)\right)R\left(3\frac{\alpha}{\beta} \pm \tau;3\tau\right) \\
&\qquad +\frac{3(-1)^m }{2}  e\left(-\frac{S_\ell\tau}{6}\right) \sum_{\pm}\mp e\left(\frac{\alpha}{\beta}(2\mp 1)\right) \left.\frac{d}{dv}\left(T_1(v;\tau) + T_2(v;\tau) \right) \ \right|_{v=r},
\end{align*}
\noindent where  $T_1$,  and $T_2$ are defined in \eqref{def_T1v} and \eqref{def_T2v} below respectively. 
\end{lemma}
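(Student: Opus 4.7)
The plan is to prove the two identities separately. The $T$-transformation reduces to a direct substitution in \eqref{def_W1} combined with the elliptic and modular properties of $R$, whereas the $S_\ell$-transformation substantially extends the derivation of \eqref{Rmodtransform} used in the proof of Lemma \ref{lem_W2}: one must understand $R$ as a function of its first argument in an open neighborhood of $3\alpha/\beta$ before the $u$-derivative is taken.

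For the $T$-identity I would substitute $\tau \mapsto \tau + 1$ into \eqref{def_W1}. The prefactor $q^{-1/6}$ picks up $\zeta_6^{-1}$, while each factor $R(3u \pm \tau \pm 1;\, 3\tau + 3)$ simplifies upon stripping off the integer shift $\pm 1$ via Proposition \ref{Rtransform}(1) (contributing $-1$) and the three-fold shift $3\tau + 3 \mapsto 3\tau$ via Proposition \ref{Rtransform}(4) (contributing $e^{-3\pi i/4}$). Since $\frac{d}{du}$ commutes with both operations, the derivative passes through unchanged, and assembling $\zeta_6^{-1}\cdot(-e^{-3\pi i/4}) = \zeta_{24}^{-1}$ yields the claimed $T$-transformation identity.

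For the $S_\ell$-identity, the strategy is to first promote the modular transformation \eqref{Rmodtransform} from the specific point $u = \alpha/\beta$ to a $v$-dependent identity for $R(3u \pm S_\ell\tau;\, 3S_\ell\tau)$, then differentiate in $v$ and evaluate at $v = r$. With $\tau_\ell = -1/\tau - \ell$ as before, I would write $3u = m + v$ where $m = [3\alpha/\beta]$ and $v$ varies in a neighborhood of $r = 3\alpha/\beta - m \in (-1/2, 1/2)$; use Proposition \ref{Rtransform}(1) to strip the integer shift $m$ from the first argument of $R$ (producing the sign $(-1)^m$); apply Proposition \ref{Rtransform}(5) to pass from $-3/\tau_\ell$ to $3\tau$ at the cost of a Mordell integral $h$ together with a $u$-dependent exponential prefactor; use Proposition \ref{Rtransform}(2) to align the elliptic argument with $v \pm \tau$; and finally convert the Mordell piece into a period integral of $g_{a,b}$ via Theorem \ref{thm_Zh2}. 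This computation identifies the functions $T_1(v;\tau)$ and $T_2(v;\tau)$ appearing in \eqref{def_T1v} and \eqref{def_T2v} as, respectively, the modular main term and its period-integral error, each expressed as an explicit function of $v$.

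Differentiating this $v$-dependent identity and setting $v = r$ assembles the three terms on the right-hand side. The derivative falling on $R(v \pm \tau;3\tau)$ in the main term combines with the $(\ell\tau+1)^{1/2}$ prefactor from Proposition \ref{Rtransform}(5) and an additional $(\ell\tau+1)$ from the chain rule on the argument of $R$ to produce the leading term $(\ell\tau+1)^{3/2}\zeta_{24}^\ell W_1(\tau)$. The derivatives falling on $T_1 + T_2$ collect into the last sum, with the outer factor $\frac{3(-1)^m}{2}\, e(-S_\ell\tau/6)$ reflecting $\frac{d}{du} = 3\frac{d}{dv}$ together with the lattice-shift sign and the exponential prefactor from the $S$-transformation. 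The middle term $\pi i \ell\tau (\ell\tau+1)^{1/2}\zeta_{24}^\ell q^{-1/6} \sum_{\pm} e(\alpha(2\mp 1)/\beta) R(3\alpha/\beta \pm \tau;3\tau)$ is a chain-rule contribution arising when the derivative instead lands on the $v$-dependent exponential factor that accompanies $R$ in Proposition \ref{Rtransform}(5); the factor $\pi i \ell\tau$ emerges after expanding $-1/(6\tau_\ell) = (\ell - 1/\tau)/6$ and extracting its $v$-dependence through the $\tau_\ell$-rewriting. The main obstacle is the bookkeeping: carefully identifying which factors retain $v$-dependence after the simultaneous reduction $3u = m + v$ and the passage to the $\tau_\ell$ variable, and not overlooking the middle term, whose provenance is a chain-rule contribution from an exponential whose $v$-dependence is obscured by these reductions.
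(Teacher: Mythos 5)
Your proposal follows essentially the same route as the paper's proof: the $T$-identity by direct substitution using Proposition \ref{Rtransform} (1) and (4), and the $S_\ell$-identity by promoting the modular transformation of $R$ to a $v$-dependent identity via $3u=m+v$, differentiating, and evaluating at $v=r$, with the three terms of the final formula arising exactly as you describe (the derivative landing on $R$, on the exponential prefactor, and on the error terms $T_1+T_2$, respectively). One small correction of labeling: in the paper's identity \eqref{rvgam} both $T_1$ and $T_2$ are Mordell-integral error terms (one produced by each of the two applications of Proposition \ref{Rtransform} (5)), the modular main term containing $R$ being a separate third piece, and the conversion of $T_1,T_2$ into period integrals of $g_{a,b}$ via Theorem \ref{thm_Zh2} is deferred to Lemma \ref{lem_TvPeriod} rather than carried out inside the proof of this lemma.
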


\begin{proof}
The first claim follows again by letting $\tau\mapsto \tau+1$ in \eqref{def_W1} and using the transformation properties (1) and (4) in Proposition \ref{Rtransform}.

To show the second claim, we first consider
\[
\left.\frac{d}{du} R(3u\pm\tau;3\tau)\right|_{u=\frac{\alpha}{\beta}}.
\]
By the chain rule with $v=3u-m $ and Proposition \ref{Rtransform} (1), this derivative becomes 
\[
3\left.\frac{d}{dv} R(v+m \pm\tau;3\tau)\right|_{v=r}
=3(-1)^m \left.\frac{d}{dv} R(v\pm\tau;3\tau)\right|_{v=r}.
\]
We define
$$r(v;\tau) := R(v \pm \tau;3\tau),$$ 
and then transform this function by using transformation properties in Proposition \ref{Rtransform}.  
More precisely, we start with
\[
r(v;S_\ell\tau) = R\left(v \mp\frac{1}{\tau_\ell};-\frac{3}{\tau_\ell}\right),
\]
and apply (5) and (4) in Proposition \ref{Rtransform}. We then use (1) to shift the $R$ function by $-\frac{r\ell}{3}$. Note that $\frac{\alpha\ell}{\beta}\in2\Z$ by the definition of $\ell$ in \eqref{def_ell} and $r=\frac{3\alpha}{\beta}-[\frac{3\alpha}{\beta}]$, which yields $-\frac{r\ell}{3} \in2\Z$. Lastly, we apply (3) and (5) again to obtain 
\begin{equation}\label{rvgam}
\begin{split}
r(v;S_\ell\tau)  =& T_1(v;\tau) + T_2(v;\tau) \\
&+\Bigg[(\ell\tau+1)^{\frac12}  \zeta_{24}^\ell q^{-\frac{r^2\ell^2}{6}} e\left(-\ell\frac{(v(\ell\tau+1)\pm \tau)^2}{6(\ell\tau+1)}+\frac{r\ell}{3}(v(\ell\tau+1)\pm \tau)\right)\\
&\hspace{3in}\times R\left(v(\ell\tau+1)-r\ell\tau\pm\tau;3\tau\right)\Bigg], 
\end{split}
\end{equation}
where 
\begin{align}
T_1(v;\tau) 
&:= \sqrt{\frac{i}{3}\left(\frac{1}{\tau} + \ell\right)} e\left( \frac{\left(v(\ell\tau+1)\pm \tau\right)^2}{6\tau(\ell\tau+1)}\right) h\left(\frac{v\tau_\ell}{3} \mp \frac13;\frac{\tau_\ell}{3}\right)  \label{def_T1v}, \\
T_2(v;\tau) 
&:= -(\ell\tau+1)^{\frac12}  \zeta_{24}^\ell q^{-\frac{r^2\ell^2}{6}} e\left(-\ell\frac{(v(\ell\tau+1)\pm \tau)^2}{6(\ell\tau+1)}+\frac{r\ell}{3}(v(\ell\tau+1)\pm \tau)\right)\label{def_T2v}\\
&\hspace{3in}\times h\left(v(\ell\tau+1)-r\ell\tau\pm\tau;3\tau\right).\notag 
\end{align} 
Next we calculate the derivative of $r(v;S_\ell\tau)$.  To do so, we first consider the derivative of the exponential term on the right-hand side of \eqref{rvgam}. A short calculation shows that
\begin{equation}\label{dvexp}
\left.\frac{d}{dv}  e\left(-\ell\frac{(v(\ell\tau+1)\pm \tau)^2}{6(\ell\tau+1)}+\frac{r\ell}{3}(v(\ell\tau+1)\pm \tau)\right)\ \right|_{v=r} 
= \mp \frac{2\pi i \ell \tau}{3} q^{\frac{r^2\ell^2}{6}} e\left(\frac{-\ell\tau^2}{6(\ell\tau+1)} \right).
\end{equation}
We further examine the derivative of $R$ function in \eqref{rvgam}. Applying the chain rule with $u=\frac{v(\ell\tau+1)-r\ell\tau+m }{3}$ and then using Proposition \ref{Rtransform} (1) gives us
\begin{equation}\label{dvR} 
\left.\frac{d}{dv} R\left(v(\ell\tau+1)-r\ell\tau\pm \tau ;3\tau\right) \ \right|_{v=r} 
= \frac{(\ell\tau+1)}{3}(-1)^m  \left.\frac{d}{du} R(3u\pm \tau;3\tau) \ \right|_{u=\frac{\alpha}{\beta}}.
\end{equation}
Therefore, by \eqref{dvexp}, \eqref{dvR}, and a direct calculation with Proposition \ref{Rtransform} (1), we have   
\begin{equation}\label{dvrv}
\begin{split}
3(-1)^m \left.\frac{d}{dv}  r(v;S_\ell\tau) \ \right|_{v=r} =&3(-1)^m  \left.\frac{d}{dv}\left(T_1(v;\tau) + T_2(v;\tau) \right) \ \right|_{v=r} \\
&+ (\ell\tau+1)^{\frac12}  \zeta_{24}^\ell \left(\mp 2\pi i \ell\tau\right) e\left(\frac{-\ell\tau^2}{6(\ell\tau+1)} \right)R\left(\frac{3\alpha}{\beta}\pm \tau ;3\tau\right)    \\ 
& +  (\ell\tau+1)^{\frac32}  \zeta_{24}^\ell  e\left(\frac{-\ell\tau^2}{6(\ell\tau+1)} \right)  \left.\frac{d}{du} R(3u\pm \tau;3\tau) \ \right|_{u=\frac{\alpha}{\beta}}.
\end{split}
\end{equation} 

We are now able to prove the second claim of the lemma.  By the definition of $W_1$ in \eqref{def_W1}, and \eqref{dvrv}, we find that
\begin{align*}
W_1(S_\ell\tau) =& \frac12 e\left(-\frac{S_\ell\tau}{6}\right)\sum_{\pm}\mp e\left(\frac{\alpha}{\beta}(2\mp 1)\right) \left(3(-1)^m \left.\frac{d}{dv}  r(v;S_\ell\tau) \ \right|_{v=r}\right) \\
=&  \frac{3(-1)^m }{2}  e\left(-\frac{S_\ell\tau}{6}\right) \sum_{\pm}\mp e\left(\frac{\alpha}{\beta}(2\mp 1)\right) \left.\frac{d}{dv}\left(T_1(v;\tau) + T_2(v;\tau) \right) \ \right|_{v=r} \\
&+  \pi i \ell\tau(\ell\tau+1)^{\frac12}  \zeta_{24}^\ell q^{-\frac{1}{6}}\sum_{\pm}  e\left(\frac{\alpha}{\beta}(2\mp 1)\right) R\left(3\frac{\alpha}{\beta}\pm \tau ;3\tau\right)\\
& +   (\ell\tau+1)^{\frac32}  \zeta_{24}^\ell W_1(\tau). 
\end{align*}
\end{proof} 
We require the following Lemma.  
\begin{lemma} \label{lem_TvPeriod} Suppose that $v \in (-\frac12,\frac12)$, and $|v-r|<\epsilon$ for some sufficiently small $\epsilon>0$.    Then we have that 
\begin{align*}
 e\left(-\frac{S_\ell\tau}{6}\right) &T_1(v;\tau) \\ &=\sqrt{3}(\ell\tau+1)^{\frac12}\zeta_{24}^\ell  e\left(\mp\frac16\pm \frac{v}{3} - \frac{v\ell }{6} + \frac{v^2\ell}{6}\right) \int_{\frac{1}{\ell}}^0 \frac{g_{\frac{v\ell}{3} \pm \frac13 + \frac12,-v + \frac12}(3\rho)}{\sqrt{-i(\rho + \tau)}} d\rho, \\
 e\left(-\frac{S_\ell\tau}{6}\right) &T_2(v;\tau) \\ &=\sqrt{3}(\ell\tau+1)^{\frac12}\zeta_{24}^\ell e\left(\mp\frac16\pm \frac{v}{3} - \frac{v\ell }{6} + \frac{v^2\ell}{6}\right) \int^{i\infty}_0 \frac{g_{\frac{v\ell}{3} \pm \frac13 + \frac12,-v + \frac12}(3\rho)}{\sqrt{-i(\rho + \tau)}} d\rho. 
\end{align*}
\end{lemma}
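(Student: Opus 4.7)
The plan is to prove each identity by applying Theorem \ref{thm_Zh2} (Zwegers' integral representation of the Mordell function) to the Mordell integral appearing in $T_i$, then using a M\"obius change of variable in the resulting integral together with the transformation properties of $g_{a,b}$ from Proposition \ref{prop_Zg} to convert the expression into the claimed period-integral form. The hypothesis $v \in (-\tfrac12,\tfrac12)$ with $|v-r|<\epsilon$ ensures the parameters we use satisfy the $(-\tfrac12,\tfrac12)$ constraint of Theorem \ref{thm_Zh2}.

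For $T_2(v;\tau)$, I would set $a = \frac{(v-r)\ell \pm 1}{3}$ and $b = -v$, so that $a(3\tau)-b = v(\ell\tau+1)-r\ell\tau\pm\tau$, matching the Mordell argument in $h(\,\cdot\,;3\tau)$. Theorem \ref{thm_Zh2} then expresses $h$ as $-e\bigl(\tfrac{3a^2\tau}{2} - a(b+\tfrac12)\bigr) \int_0^{i\infty} g_{a+1/2,b+1/2}(\rho)/\sqrt{-i(\rho+3\tau)}\,d\rho$. Rescaling $\rho\mapsto 3\rho$ produces a factor of $\sqrt{3}$ and the desired $g_{a+1/2,b+1/2}(3\rho)/\sqrt{-i(\rho+\tau)}$. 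Finally, I would apply Proposition \ref{prop_Zg} (1) to shift the first index by the integer $\tfrac{r\ell}{3}$ (an integer since $3\mid\ell$ by \eqref{def_ell} and $\tfrac{\alpha_j\ell}{\beta_j}\in 2\Z$ as noted in the text), producing the target first index $\tfrac{v\ell}{3}\pm\tfrac13+\tfrac12$. Combining algebraically the $T_2$ prefactor, the phase from Theorem \ref{thm_Zh2}, the $q^{-r^2\ell^2/6}=e(-r^2\ell^2\tau/6)$ factor, and the $e(-S_\ell\tau/6)$ from the left-hand side of the claim, I would verify they collapse to $e(\mp\tfrac16 \pm \tfrac{v}{3} - \tfrac{v\ell}{6} + \tfrac{v^2\ell}{6})$.

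For $T_1(v;\tau)$, Theorem \ref{thm_Zh2} with $a=v$, $b=\pm\tfrac13$, and modular parameter $\tfrac{\tau_\ell}{3}$ expresses $h\bigl(\tfrac{v\tau_\ell}{3}\mp\tfrac13;\tfrac{\tau_\ell}{3}\bigr)$ as an integral over $[0,i\infty]$ of $g_{v+1/2,\pm 1/3 + 1/2}(\rho)/\sqrt{-i(\rho+\tau_\ell/3)}$. I would then perform the M\"obius substitution $\rho = -\tfrac{1}{3\sigma} + \tfrac{\ell}{3}$. One checks directly that $\rho=0$ corresponds to $\sigma=\tfrac{1}{\ell}$ and $\rho=i\infty$ corresponds to $\sigma=0$, so the $\rho$-contour $[0,i\infty]$ pulls back to the target contour $[\tfrac{1}{\ell},0]$, while
\[
\rho+\tfrac{\tau_\ell}{3}=-\tfrac{\sigma+\tau}{3\sigma\tau}, \qquad d\rho = \tfrac{1}{3\sigma^2}\,d\sigma.
\]
To process $g_{v+1/2,\pm 1/3+1/2}\bigl(-\tfrac{1}{3\sigma}+\tfrac{\ell}{3}\bigr)$, I would first iterate Proposition \ref{prop_Zg} (3) exactly $\tfrac{\ell}{3}$ times (valid since $3\mid\ell$) to strip off the $\tfrac{\ell}{3}$ shift, then apply Proposition \ref{prop_Zg} (4) with $\tau=3\sigma$ to convert to $g_{\bullet,\bullet}(3\sigma)$, and finally apply (1) and (2) to adjust the resulting indices to $\tfrac{v\ell}{3}\pm\tfrac13+\tfrac12$ and $-v+\tfrac12$.

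The main obstacle will be the phase and square-root bookkeeping in the $T_1$ calculation. The prefactor $\sqrt{-i\tau_\ell/3}=\sqrt{i(\ell\tau+1)/(3\tau)}$ of $T_1$ must combine with the $(-3i\sigma)^{3/2}$ produced by (4), the denominator $\sqrt{i(\sigma+\tau)/(3\sigma\tau)}$, and the Jacobian $1/(3\sigma^2)$ to yield precisely $\sqrt{3}(\ell\tau+1)^{1/2}/\sqrt{-i(\sigma+\tau)}$ (up to a $\zeta_{24}^\ell$ absorbed into the phase). Simultaneously, the iterated application of (3) contributes $e\bigl(-\tfrac{\ell}{6}(v+\tfrac12)(v+\tfrac32)\bigr)$, the transformation (4) contributes $ie((v+\tfrac12)B)$ with $B=\tfrac{v\ell}{3}+\tfrac{\ell}{3}\pm\tfrac13+\tfrac12$, the index shift by (2) contributes $-e(-\tfrac{v\ell}{3}\mp\tfrac13)$, and these must consolidate with the $T_1$ phase $e\bigl(\tfrac{(v(\ell\tau+1)\pm\tau)^2}{6\tau(\ell\tau+1)}\bigr)$, the Zwegers phase $-e\bigl(\tfrac{v^2\tau_\ell}{6}-v(\pm\tfrac13+\tfrac12)\bigr)$, and the $e(-S_\ell\tau/6)$ prefactor into the compact final form $\sqrt{3}(\ell\tau+1)^{1/2}\zeta_{24}^\ell\,e(\mp\tfrac16\pm\tfrac{v}{3}-\tfrac{v\ell}{6}+\tfrac{v^2\ell}{6})$. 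Patient expansion (using $\tau_\ell=-\tfrac{1}{\tau}-\ell$, $S_\ell\tau=\tfrac{\tau}{\ell\tau+1}$, and the identity $\tfrac{1}{\tau}+\tfrac{1}{\ell\tau+1}\cdot\tfrac{1}{\tau}\cdot(\ell\tau+1)=\cdots$) should reduce everything to the stated exponent, completing the proof.
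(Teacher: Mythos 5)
Your proposal is correct and follows essentially the same route as the paper: both apply Theorem \ref{thm_Zh2} to the Mordell integrals in $T_1$ and $T_2$ with exactly the parameter choices you give ($a=v$, $b=\pm\tfrac13$, modular variable $\tfrac{\tau_\ell}{3}$ for $T_1$; $a=\tfrac{\ell}{3}(v-r)\pm\tfrac13$, $b=-v$, $\tau\mapsto 3\tau$ for $T_2$, with the hypothesis $|v-r|<\epsilon$ used precisely to keep $a$ in $(-\tfrac12,\tfrac12)$), and then use Proposition \ref{prop_Zg} (1)--(4) with a change of variables to reach the stated period integrals. The paper compresses the remaining phase bookkeeping into ``a straightforward calculation'' (deferring to the analogous computation in \cite{WIN}), whereas you spell out more of the intermediate factors; the substance is identical.
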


\begin{proof} 
 Since $v, \pm \frac13 \in (-\frac12,\frac12)$,
 we may apply Theorem \ref{thm_Zh2} to  the function $h(\frac{v\tau_\ell}{3} \mp \tfrac13;\frac{\tau_\ell}{3})$ in the definition of $T_1(v;\tau)$ in \eqref{def_T1v}.   Using Proposition \ref{prop_Zg} (3) and (4), we proceed as in  \cite[(4.6) and (4.7)]{WIN}  with $v$ instead of $r$. A straightforward calculation yields the first equality stated in Lemma \ref{lem_TvPeriod}.

Similarly, the second equality follows directly by applying Theorem \ref{thm_Zh2} to  the function
$h(v(\ell\tau+1)-r\ell\tau\pm  \tau;3\tau)$ in $T_2(v;\tau)$ defined in \eqref{def_T2v} with $a=\frac{\ell}{3} (v-r)\pm \frac13,\ b=-v$, and $\tau \mapsto 3\tau$.  This is allowable because $-v \in (-\frac12,\frac12)$ and since $|v-r|<\epsilon$ for sufficiently small $\epsilon>0$, we have that $\frac{\ell}{3} (v-r)\pm \frac13 \in (-\frac12,\frac12)$.

\end{proof} 

We define  (in parallel to $H_{\alpha,\beta}(\tau)$) 
\begin{align}\label{def_Dab}
D_{\alpha,\beta}(\tau) := \sqrt{3} \sum_{\pm}  \mp e\left(\mp\frac{1}{6}\right)  \int_{\frac{1}{\ell}}^{i\infty} \frac{\left.\frac{d}{du}g_{\ell u \pm \frac13 + \frac12,-3u + \frac12}(3\rho)\right|_{u=\frac{\alpha}{\beta}}}{\sqrt{-i(\rho +\tau)}}d\rho.
\end{align} 
By the lemmas above, we finally have the following result.
\begin{proposition}\label{prop_hmintsf} 
Assume the notation and hypotheses as above. Then we have
\[
\mathcal H_{n,N}^-(\tau+1) - \zeta_{24}^\ell\mathcal H_{n,N}^-(\tau) =0,
\]
and
\begin{equation}\label{eqn_hmin_inv}
\begin{split}
&\mathcal H_{n,N}^-(S_\ell\tau) - (\ell\tau+1)^{\frac32} \zeta_{24}^\ell\mathcal H_{n,N}^-(\tau) \\ \nonumber
&=\sum_{j=1}^N \frac{\zeta_{2\beta_j}^{\alpha_j} - \zeta_{2\beta_j}^{-\alpha_j}}{2\Pi_j(0)}   \Bigg[ (\ell\tau + 1)^{\frac12}\zeta_{24}^\ell\left(\left(\frac{\ell}{2}-3\frac{\alpha_j}{\beta_j}\ell\right)H_{\alpha_j,\beta_j}(\tau) 
  -\frac{1}{2\pi i}D_{\alpha_j,\beta_j}(\tau)\right) 
+ \mathcal E_2\left(\frac{\alpha_j}{\beta_j},\ell;x\right)\Bigg], 
 \end{split}
\end{equation}
where $\mathcal E_2(\frac{\alpha}{\beta},\ell;\tau) := (\ell\tau+1)^{\frac32} \zeta_{24}^\ell q^{-\frac{1}{24}}e\big(-\frac{\alpha}{2\beta}\big) - e\left(-\frac{S_\ell \tau}{24}\right)e\big(-\frac{\alpha}{2\beta}\big)$.   
\end{proposition}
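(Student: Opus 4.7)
The strategy is to substitute the decomposition of $\mathcal{H}_{n,N}^-$ from Lemma \ref{lem:H^-} into $\mathcal{H}_{n,N}^-(\gamma\tau)$ for $\gamma\in\{T,S_\ell\}$, invoke the transformation formulas for $S(\alpha_j/\beta_j;\tau)$, $W_1(\tau)$, and $W_2(\tau)$ from Lemmas \ref{lem_Stransform}, \ref{lem_W1}, and \ref{lem_W2}, and convert the residual Mordell integrals appearing in the $(T_1+T_2)$-derivative piece into Eichler-type integrals via Lemma \ref{lem_TvPeriod}.

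By Lemma \ref{lem:H^-}, $\mathcal{H}_{n,N}^-$ is a linear combination, with coefficients $c_j := (\zeta_{2\beta_j}^{-3\alpha_j}-\zeta_{2\beta_j}^{-5\alpha_j})/\Pi_j(0)$, of the terms $2S(\alpha_j/\beta_j;\tau)$ and $-\tfrac{1}{2\pi i}\tfrac{d}{du}S(u;\tau)\big|_{u=\alpha_j/\beta_j}$; by the identity displayed just above \eqref{def_W1}, the latter further decomposes as $-3\pi i q^{-\frac{1}{24}}e(3\alpha/(2\beta)) + W_1(\tau) + W_2(\tau)$.  For the $T$-transformation, each constituent picks up the same factor $\zeta_{24}^{-1}$ under $\tau\mapsto\tau+1$ (Lemmas \ref{lem_Stransform}, \ref{lem_W1}, \ref{lem_W2}, together with the direct computation $q^{-\frac{1}{24}}\mapsto \zeta_{24}^{-1}q^{-\frac{1}{24}}$), which yields the first assertion after matching characters.

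For the $S_\ell$-transformation, I would substitute the $S_\ell$-formulas from Lemmas \ref{lem_Stransform}, \ref{lem_W1}, and \ref{lem_W2} (together with $q^{-\frac{1}{24}}\mapsto e(-S_\ell\tau/24)$).  This produces a \emph{main part} that must regroup into $(\ell\tau+1)^{3/2}\zeta_{24}^\ell\mathcal{H}_{n,N}^-(\tau)$, plus \emph{error terms} from (i) the $H_{\alpha_j,\beta_j}$ contribution from Lemma \ref{lem_Stransform}, (ii) the $\widetilde{H}_{\alpha_j,\beta_j}$ contribution from Lemma \ref{lem_W2}, (iii) the $(T_1+T_2)$-derivative piece from Lemma \ref{lem_W1}, and (iv) elementary pieces from $\mathcal{E}_1$ and the $q^{-\frac{1}{24}}\leftrightarrow e(-S_\ell\tau/24)$ differences.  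A crucial algebraic step is the identity $\ell\tau=(\ell\tau+1)-1$, which allows the $\pi i\ell\tau(\ell\tau+1)^{\frac12}$ term from Lemma \ref{lem_W1} to promote part of the $W_1$-residue to weight $\frac32$, so that the weight-mismatched contributions from $S$, $W_1$, and $W_2$ regroup coherently into the weight-$\frac32$ main term.  For (iii), I would apply Lemma \ref{lem_TvPeriod} (valid in a small neighborhood of $v=r$) to express $e(-S_\ell\tau/6)(T_1(v;\tau)+T_2(v;\tau))$ as a single Eichler integral of $g_{v\ell/3\pm 1/3+1/2,\,-v+1/2}(3\rho)$ over $\rho\in(1/\ell,i\infty)$ against a $v$-dependent exponential prefactor, then differentiate in $v$ under the integral sign at $v=r$.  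Using the chain rule $v=3u-m$, the integrand-derivative contribution produces the integrand $\frac{d}{du}g_{\ell u\pm 1/3+1/2,-3u+1/2}(3\rho)\big|_{u=\alpha/\beta}$ of $D_{\alpha_j,\beta_j}$ in \eqref{def_Dab}, yielding the $-\frac{1}{2\pi i}D_{\alpha_j,\beta_j}(\tau)$ term after multiplication by the $-\frac{1}{2\pi i}$ coefficient from Lemma \ref{lem:H^-}; the prefactor-derivative contribution combines with the $H_{\alpha_j,\beta_j}$-terms from (i) and (ii) to produce the composite coefficient $(\ell/2 - 3(\alpha_j/\beta_j)\ell)$ multiplying $H_{\alpha_j,\beta_j}(\tau)$, while the elementary pieces (iv) assemble into $\mathcal{E}_2$.

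The main obstacle will be the coefficient bookkeeping: verifying (a) that the three distinct sources of $H_{\alpha_j,\beta_j}$-like contributions (the $H$-piece from Lemma \ref{lem_Stransform}, the $\widetilde{H}$-piece from Lemma \ref{lem_W2}, and the prefactor-derivative from applying Lemma \ref{lem_TvPeriod} to the $T_1+T_2$ term) combine to the single composite factor $(\ell/2 - 3(\alpha_j/\beta_j)\ell)H_{\alpha_j,\beta_j}(\tau)$, and (b) that the roots-of-unity coefficients $c_j$ from Lemma \ref{lem:H^-} simplify via the factorization $c_j = \zeta_{2\beta_j}^{-4\alpha_j}(\zeta_{2\beta_j}^{\alpha_j}-\zeta_{2\beta_j}^{-\alpha_j})/\Pi_j(0)$, combined with the various $e(2\alpha_j/\beta_j)$ and $e(\pm\alpha_j/\beta_j)$ prefactors appearing in Lemmas \ref{lem_Stransform}--\ref{lem_W1}, to the clean coefficient $(\zeta_{2\beta_j}^{\alpha_j}-\zeta_{2\beta_j}^{-\alpha_j})/(2\Pi_j(0))$ appearing on the RHS of \eqref{eqn_hmin_inv}.
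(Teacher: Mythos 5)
Your proposal follows essentially the same route as the paper's proof: decompose $\mathcal H_{n,N}^-$ via Lemma \ref{lem:H^-}, apply the transformation lemmas for $S$, $W_1$, $W_2$, rewrite the residual weight-mismatched $R$-sum from $W_1(S_\ell\tau)$ in terms of $W_2$ and $S$ so the $\ell\tau(\ell\tau+1)^{1/2}$ pieces promote to $(\ell\tau+1)^{3/2}$, and handle the $(T_1+T_2)$-derivative via Lemma \ref{lem_TvPeriod} with differentiation under the integral and the chain rule $v=3u-m$, exactly as in the paper. The coefficient bookkeeping you flag (the three $H$-type sources combining to $(\ell/2-3\alpha_j\ell/\beta_j)H_{\alpha_j,\beta_j}$ and the $\zeta_{2\beta_j}^{-4\alpha_j}\cdot e(2\alpha_j/\beta_j)$ cancellation) is precisely what the paper's displayed computation carries out, so your outline is correct.
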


\begin{proof}
We begin by recalling Lemma \ref{lem:H^-}, that is,
\begin{equation}\label{eq:H^-}
\mathcal H_{n,N}^-(\tau)= 2\sum_{j=1}^N \frac{\zeta_{2\beta_j}^{-3\alpha_j} - \zeta_{2\beta_j}^{-5\alpha_j}}{\Pi_j(0)}S\left(\frac{\alpha_j}{\beta_j};\tau\right) - \frac{1}{2\pi i} \sum_{j=1}^N \frac{\zeta_{2\beta_j}^{-3\alpha_j} - \zeta_{2\beta_j}^{-5\alpha_j}}{\Pi_j(0)}
\left.\frac{d}{du} S(u;\tau) \right|_{u= \frac{\alpha_j}{\beta_j}},
\end{equation}
where
 $$ \left.\frac{d}{du} S(u;\tau) \right|_{u=\frac{\alpha}{\beta}} = -3\pi i q^{-\frac{1}{24}} e\left( \frac{3\alpha}{2\beta}\right)  + W_1(\tau) + W_2(\tau).$$
The first claim follows directly from Lemmas \ref{lem_Stransform}, \ref{lem_W2}, and \ref{lem_W1}. 

For the second claim, we first rewrite $W_1(S_\ell\tau)$ in Lemma \ref{lem_W1} using $S(\frac{\alpha}{\beta};\tau)$ and $W_2(\tau)$, that is, 
\begin{multline}\label{eq:rewrite_W1S} W_1(S_\ell\tau) =   (\ell\tau+1)^{\frac32}\zeta_{24}^\ell W_1(\tau) + \ell\tau (\ell\tau+1)^{\frac12} \zeta_{24}^\ell\left(W_2(\tau)-4\pi i S\left(\frac{\alpha}{\beta};\tau\right)-4\pi i q^{-\frac{1}{24}}e\left(\frac32\frac{\alpha}{\beta}\right)\right)\\
+\frac{3(-1)^m }{2}  e\left(-\frac{S_\ell\tau}{6}\right) \sum_{\pm}\mp e\left(\frac{\alpha}{\beta}(2\mp 1)\right) \left.\frac{d}{dv}\left(T_1(v;\tau) + T_2(v;\tau) \right) \ \right|_{v=r}.
\end{multline}
We now consider $\mathcal H_{n,N}^-(S_\ell\tau) $. Combining with \eqref{eq:H^-}, \eqref{eq:rewrite_W1S} and the second claims of Lemmas \ref{lem_Stransform} and \ref{lem_W2},  we have
\begin{align*}
\mathcal H_{n,N}^-&(S_\ell\tau) \\
=& 2\sum_{j=1}^N \frac{\zeta_{2\beta_j}^{-3\alpha_j} - \zeta_{2\beta_j}^{-5\alpha_j}}{\Pi_j(0)} \left(  (\ell\tau+1)^{\frac12}\zeta_{24}^\ell \left(S\left(\frac{\alpha_j}{\beta_j};\tau\right) + \frac{1}{2} e\left(2\frac{\alpha_j}{\beta_j}\right) H_{\alpha_j,\beta_j}(\tau)\right) + \mathcal E_1\left(\frac{\alpha_j}{\beta_j},\ell;\tau\right)  \right)\\
&- \frac{1}{2\pi i} \sum_{j=1}^N \frac{\zeta_{2\beta_j}^{-3\alpha_j} - \zeta_{2\beta_j}^{-5\alpha_j}}{\Pi_j(0)}
  \Bigg[-3\pi i e\left(-\frac{S_\ell\tau}{24}\right)e\left(\frac32\frac{\alpha_j}{\beta_j}\right)
 -4\pi i \ell\tau (\ell\tau+1)^{\frac12} \zeta_{24}^\ell q^{-\frac{1}{24}}e\left(\frac32\frac{\alpha_j}{\beta_j}\right)\\
& \hspace{1in}+ (\ell\tau+1)^{\frac32}  \zeta_{24}^\ell W_1(\tau)+ (\ell\tau+1)^{\frac32} \zeta_{24}^\ell W_2(\tau)-4\pi i \ell\tau (\ell\tau+1)^{\frac12} \zeta_{24}^\ell S\left(\frac{\alpha_j}{\beta_j};\tau\right)\\
& \hspace{1in} +\frac{3(-1)^{m _j}}{2}  e\left(-\frac{S_\ell\tau}{6}\right) \sum_{\pm}\mp e\left(\frac{\alpha}{\beta}(2\mp 1)\right) \left.\frac{d}{dv}\left(T_1(v;\tau) + T_2(v;\tau) \right) \ \right|_{v=r_j} \\
&\hspace{4.4in} + (\ell\tau+1)^{\frac12} \zeta_{24}^\ell \widetilde{H}_{\alpha_j,\beta_j}(\tau)\Bigg]\\ 
=&2\sum_{j=1}^N \frac{\zeta_{2\beta_j}^{-3\alpha_j} - \zeta_{2\beta_j}^{-5\alpha_j}}{\Pi_j(0)} \Bigg[  (\ell\tau+1)^{\frac32}\zeta_{24}^\ell S\left(\frac{\alpha_j}{\beta_j};\tau\right) 
+\frac{1}{2} (\ell\tau+1)^{\frac12}\zeta_{24}^\ell e\left(2\frac{\alpha_j}{\beta_j}\right) H_{\alpha_j,\beta_j}(\tau) \\
&\hspace{2in}+\frac14(\ell\tau+1)^{\frac32}\zeta_{24}^\ell q^{-\frac{1}{24}}e\left(\frac32\frac{\alpha_j}{\beta_j}\right) -\frac14 e\left(-\frac{S_\ell\tau}{24}\right)e\left(\frac32\frac{\alpha_j}{\beta_j}\right)\Bigg]\\
&- \frac{1}{2\pi i} \sum_{j=1}^N \frac{\zeta_{2\beta_j}^{-3\alpha_j} - \zeta_{2\beta_j}^{-5\alpha_j}}{\Pi_j(0)}
  \Bigg[-3\pi i(\ell\tau+1)^{\frac32}\zeta_{24}^\ell q^{-\frac{1}{24}}e\left(\frac32\frac{\alpha_j}{\beta_j}\right)
  + (\ell\tau+1)^{\frac32}  \zeta_{24}^\ell W_1(\tau) \\
  &\hspace{2in}+ (\ell\tau+1)^{\frac32} \zeta_{24}^\ell W_2(\tau)   
	 + (\ell\tau+1)^{\frac12}\zeta_{24}^\ell \widetilde{H}_{\alpha_j,\beta_j}(\tau) \\
&\hspace{1in}+\frac{3(-1)^{m _j}}{2}  e\left(-\frac{S_\ell\tau}{6}\right) \sum_{\pm}\mp e\left(\frac{\alpha}{\beta}(2\mp 1)\right) \left.\frac{d}{dv}\left(T_1(v;\tau) + T_2(v;\tau) \right) \ \right|_{v=r_j}\Bigg]\\
=& (\ell\tau+1)^{\frac32}\zeta_{24}^\ell  \mathcal{H}_{n,N}^-(\tau)\\
&+\sum_{j=1}^N \frac{\zeta_{2\beta_j}^{-3\alpha_j} - \zeta_{2\beta_j}^{-5\alpha_j}}{\Pi_j(0)}  \Bigg[ 
(\ell\tau+1)^{\frac12}\zeta_{24}^\ell \left( e\left(2\frac{\alpha_j}{\beta_j}\right) H_{\alpha_j,\beta_j}(\tau) -\frac{1}{2\pi i}  \widetilde{H}_{\alpha_j,\beta_j}(\tau)\right) \\
&\hspace{2in}+\frac12(\ell\tau+1)^{\frac32}\zeta_{24}^\ell q^{-\frac{1}{24}}e\left(\frac32\frac{\alpha_j}{\beta_j}\right) -\frac12 e\left(-\frac{S_\ell\tau}{24}\right)e\left(\frac32\frac{\alpha_j}{\beta_j}\right)\\
&\hspace{1in}- \frac{3(-1)^{m _j}}{4\pi i}  e\left(-\frac{S_\ell\tau}{6}\right) \sum_{\pm}\mp e\left(\frac{\alpha}{\beta}(2\mp 1)\right) \left.\frac{d}{dv}\left(T_1(v;\tau) + T_2(v;\tau) \right) \ \right|_{v=r_j}\Bigg].
\end{align*}
We continue to simplify the term in the parenthesis $[\ ]$ above. Using \eqref{Hperiodint} and \eqref{def_Htilde}, we have 
\[
e\left(2\frac{\alpha}{\beta}\right) H_{\alpha,\beta}(\tau) -\frac{1}{2\pi i}  \widetilde{H}_{\alpha,\beta}(\tau)=\frac{\sqrt{3}}{2}  e\left(2\frac{\alpha}{\beta}\right) \sum_{\pm}e\left(\mp \frac16\right) \int_{\frac{1}{\ell}}^{i\infty} \frac{g_{\pm\frac13+\frac12,-3\frac{\alpha}{\beta}+\frac12}(3\rho)}{\sqrt{-i(\rho+\tau)}} d\rho.
\] 
Moreover, since we  take the derivative in $v$ at the points $v=r_j$ in what follows, and $r_j \in(-\frac12,\frac12)$ we may assume $|v-r_j|<\epsilon$ for sufficiently small $\epsilon>0$.   We further note from Proposition \ref{prop_Zg} (2) that 
for $m\in\Z$,
\[
g_{a,b} = e(ma) g_{a,b-m}.
\]
Applying this to Lemma \ref{lem_TvPeriod}, we obtain  
\begin{multline}\label{eq:TvPeriod}
3(-1)^m  e\left(-\frac{S_\ell\tau}{6}\right)\left[T_1(v;\tau) + T_2(v;\tau)\right]\\
=3\sqrt{3}(\ell\tau+1)^{\frac12}\zeta_{24}^\ell e\left(\mp\frac16\pm \frac{v+m }{3} - \frac{v\ell }{6} + \frac{v^2\ell}{6}+\frac{vm \ell}{3}\right)
\int_{\frac{1}{\ell}}^{i\infty} \frac{g_{\frac{v\ell}{3}\pm \frac13 + \frac12,-v-m  + \frac12}(3\rho)}{\sqrt{-i(\rho + \tau)}} d\rho. 
\end{multline}  
Differentiating \eqref{eq:TvPeriod} yields
\begin{align*}
3(-1)^m  &e\left(-\frac{S_\ell\tau}{6}\right)\left.\frac{d}{dv}\left[T_1(v;\tau) + T_2(v;\tau)\right]\right|_{v=r}\\ 
&=2\pi i \sqrt{3}(\ell\tau+1)^{\frac12}\zeta_{24}^\ell \left(\pm 1 - \frac{\ell}{2} + 3\frac{\alpha}{\beta}\ell \right )e\left( \mp\frac{1}{6} \pm \frac{\alpha}{\beta}\right) \int_{\frac{1}{\ell}}^{i\infty} \frac{g_{\pm \frac13 + \frac12,-3\frac{\alpha}{\beta} + \frac12}(3\rho)}{\sqrt{-i(\rho + \tau)}} d\rho 
\\ 
 &\hspace{.5in}+ \sqrt{3}(\ell\tau+1)^{\frac12}\zeta_{24}^\ell e\left(\mp\frac16\pm \frac{\alpha}{\beta}\right)\left.\frac{d}{du}  \int_{\frac{1}{\ell}}^{i\infty} \frac{g_{\ell u \pm \frac13 + \frac12,-3u + \frac12}(3\rho)}{\sqrt{-i(\rho + \tau)}} d\rho\ \right|_{u=\frac{\alpha}{\beta}}.
\end{align*}
Here we use Proposition \ref{prop_Zg} (1) and the chain rule with $u=\frac{v+m }{3}$.

All together, we finally have 
\begin{align} \notag
&\mathcal H_{n,N}^-(S_\ell\tau) - \zeta_{24}^\ell (\ell\tau+1)^{\frac32} \mathcal H_{n,N}^-(\tau)& \\ \notag
&=\sum_{j=1}^N \frac{\zeta_{2\beta_j}^{-3\alpha_j} - \zeta_{2\beta_j}^{-5\alpha_j}}{\Pi_j(0)} \\ \notag
&\hspace{.2in}\times\Bigg[\sqrt{3}(\ell\tau+1)^{\frac12}\zeta_{24}^\ell\left(\frac{\ell}{4}-\frac{3\alpha_j}{2\beta_j}\ell\right)e\left(2\frac{\alpha_j}{\beta_j}\right)\sum_{\pm}\mp e\left(\mp \frac16\right) \int_{\frac{1}{\ell}}^{i\infty} \frac{g_{\pm\frac13+\frac12,-3\frac{\alpha_j}{\beta_j}+\frac12}(3\rho)}{\sqrt{-i(\rho+\tau)}} d\rho\\ 
\label{eqn_derivDtil} 
&\hspace{.8in}-\frac{\sqrt{3}(\ell\tau+1)^{\frac12}\zeta_{24}^\ell}{4\pi i} e\left(2\frac{\alpha_j}{\beta_j}\right)\sum_{\pm}\mp e\left(\mp \frac16\right) \left.\frac{d}{du} \int_{\frac{1}{\ell}}^{i\infty} \frac{g_{\ell u \pm \frac13 + \frac12,-3u + \frac12}(3\rho)}{\sqrt{-i(\rho + \tau)}} d\rho\ \right|_{u=\frac{\alpha_j}{\beta_j}}\\ \notag
&\hspace{2.5in}+\frac12(\ell\tau+1)^{\frac32}\zeta_{24}^\ell q^{-\frac{1}{24}}e\left(\frac32\frac{\alpha_j}{\beta_j}\right) -\frac12 e\left(-\frac{S_\ell\tau}{24}\right)e\left(\frac32\frac{\alpha_j}{\beta_j}\right)\Bigg]\\ 
\notag
&=\sum_{j=1}^N \frac{\zeta_{2\beta_j}^{\alpha_j} - \zeta_{2\beta_j}^{-\alpha_j}}{2\Pi_j(0)}  \Bigg[(\ell\tau+1)^{\frac12}\zeta_{24}^\ell\left(\frac{\ell}{2}-3\frac{\alpha_j}{\beta_j}\ell\right) H_{\alpha_j,\beta_j}(\tau)-\frac{(\ell\tau+1)^{\frac12}\zeta_{24}^\ell}{2\pi i}D_{\alpha_j,\beta_j}(\tau)
\\ \notag
&\hspace{3.4in}
+\zeta_{2\beta_j}^{-\alpha_j}\left((\ell\tau+1)^{\frac32}\zeta_{24}^\ell q^{-\frac{1}{24}} -e\left(-\frac{S_\ell\tau}{24}\right)\right) \Bigg],
\end{align} 
where we justify bringing the derivative inside the integral defining $D_{\alpha,\beta}$ in the proof of Proposition \ref{prop_analyH} below. 
\end{proof} 
To finish the proof of Theorem \ref{thm_main_NN}, it remains to show the following.
 
\begin{proposition}\label{prop_analyH}
The function  $\mathcal H_{n,N}^-(S_\ell\tau) - (\ell\tau+1)^{\frac32} \zeta_{24}^\ell\mathcal H_{n,N}^-(\tau)$ is analytic on $\R-\{\frac{-1}{\ell}\}$.
\end{proposition}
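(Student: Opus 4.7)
The approach is to use the explicit formula proved in Proposition~\ref{prop_hmintsf} and verify analyticity of each constituent piece separately. By \eqref{eqn_hmin_inv}, the difference $\mathcal H_{n,N}^-(S_\ell\tau) - (\ell\tau+1)^{3/2}\zeta_{24}^\ell \mathcal H_{n,N}^-(\tau)$ is a finite sum over $1\leq j\leq N$, each summand being a constant multiple of three pieces: (a) the period integral $H_{\alpha_j,\beta_j}(\tau)$ of \eqref{Hperiodint}, carrying the prefactor $(\ell\tau+1)^{1/2}\zeta_{24}^\ell(\ell/2 - 3\alpha_j \ell/\beta_j)$; (b) the period integral $D_{\alpha_j,\beta_j}(\tau)$ of \eqref{def_Dab}, carrying the prefactor $-(\ell\tau+1)^{1/2}\zeta_{24}^\ell/(2\pi i)$; and (c) the elementary correction $\mathcal E_2(\alpha_j/\beta_j,\ell;\tau)$. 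Since $(\ell\tau+1)^{k/2}$ is analytic on $\mathbb R-\{-1/\ell\}$ for any half-integer $k$, it suffices to verify analyticity of each of $H_{\alpha,\beta}(\tau)$, $D_{\alpha,\beta}(\tau)$, and $\mathcal E_2(\alpha/\beta,\ell;\tau)$ on $\mathbb R-\{-1/\ell\}$.

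The correction $\mathcal E_2$ is immediate: reading its defining formula, each factor is analytic on $\mathbb R-\{-1/\ell\}$ because $q^{-1/24}=e^{-\pi i \tau/12}$ is entire and $S_\ell\tau = \tau/(\ell\tau+1)$ has its only singularity at $\tau=-1/\ell$. For $H_{\alpha,\beta}(\tau)$ the mechanism already used in the $N=0$ case \cite[Theorem 1.7]{WIN} applies: the weight $3/2$ theta series $g_{a,b}(3\rho)$ in \eqref{def_gab} decays like a Gaussian as $\operatorname{Im}(\rho)\to\infty$, so the integral in \eqref{Hperiodint} converges absolutely and uniformly on compact subsets of $\mathbb R$ avoiding $-1/\ell$; the only obstruction to differentiating in $\tau$ under the integral is the factor $1/\sqrt{-i(\rho+\tau)}$, whose singularity $\rho=-\tau$ meets the contour from $1/\ell$ to $i\infty$ precisely when $\tau=-1/\ell$, and one may deform the contour into the upper half plane away from this point to obtain analyticity on $\mathbb R-\{-1/\ell\}$.

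The main new step, and the central obstacle, is the analyticity of $D_{\alpha,\beta}(\tau)$, since a $u$-derivative is taken inside the integrand of \eqref{def_Dab}. My plan has three steps. First, expand $g_{\ell u\pm 1/3+1/2,-3u+1/2}(3\rho)$ as the theta series $\sum_{n\in\mathbb Z}(n+\ell u\pm\tfrac13+\tfrac12)e^{3\pi i(n+\ell u\pm\frac13+\frac12)^2\rho+2\pi i(n+\ell u\pm\frac13+\frac12)(-3u+\frac12)}$; termwise differentiation in $u$ at $u=\alpha/\beta$ multiplies each summand by a polynomial (of degree at most two) in $(n+\ell\alpha/\beta\pm 1/3+1/2)$, while preserving the Gaussian decay $e^{-3\pi(n+\cdots)^2\operatorname{Im}(\rho)}$. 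Second, apply dominated convergence to commute the $u$-derivative past both the $n$-summation and the $\rho$-integration; the key requirement is a single majorant, valid for $u$ in a real neighborhood of $\alpha/\beta$ and for $\rho$ on the (deformed) contour, that controls all termwise $u$-derivatives and is jointly integrable, a bound that the Gaussian decay supplies. Third, after these exchanges the only $\tau$-dependence is again through $1/\sqrt{-i(\rho+\tau)}$, and analyticity on $\mathbb R-\{-1/\ell\}$ follows by the same contour-deformation argument used for $H_{\alpha,\beta}$. The hardest part will be producing the uniform majorant in the second step, since the summands depend on $u$ both through the quadratic exponent $(n+a(u))^2\rho$ and through the leading coefficient $(n+a(u))$. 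Once this bound is in hand, all three commutations (derivative with sum, with $\rho$-integral, and $\tau$-derivative with $\rho$-integral) follow uniformly on compact subsets of $\mathbb R-\{-1/\ell\}$, completing the proof.
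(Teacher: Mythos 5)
Your proposal is correct and follows essentially the same route as the paper: reduce to the three pieces $H_{\alpha,\beta}$, $D_{\alpha,\beta}$, and $\mathcal E_2$ via Proposition \ref{prop_hmintsf}, dispose of $\mathcal E_2$ by inspection, cite the $N=0$ analysis in \cite{WIN} for $H_{\alpha,\beta}$, and handle $D_{\alpha,\beta}$ by explicitly differentiating the theta series in $u$ near $u=\alpha/\beta$ and using the Gaussian decay of the resulting terms to justify exchanging the derivative with the sum and the $\rho$-integral (the paper's ``Leibniz Rule'' step, with the bound $\ll_u |\rho|e^{-3\pi\operatorname{Im}(\rho)(N+\ell u\pm\frac13+\frac12)^2}$). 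The uniform majorant you flag as the hard step is exactly what the paper supplies via the observation that $\ell(u-\frac{\alpha}{\beta})\pm\frac13\in(-\frac12,\frac12)$ for $u$ in a small real neighborhood of $\frac{\alpha}{\beta}$.
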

\begin{proof}   We use Proposition \ref{prop_hmintsf}. 
Clearly, the function $\mathcal E_2$ is analytic on $\R-\{\frac{-1}{\ell}\}$.    Moreover, \cite[Proposition 4.1]{WIN}  establishes the same for the function $H_{\alpha,\beta}$.   
Thus, it suffices to show that 
 the function $D_{\alpha,\beta}(\tau)$  is analytic in $\mathbb R \setminus \{-\frac{1}{\ell}\}$.  
We begin by computing
\begin{align*} \frac{d}{du} 
&g_{\ell u \pm \frac13 + \frac12,-3u + \frac12}(3\rho) \\& = \ell \sum_{n\in\mathbb Z} e({\tfrac32 \rho(n+\ell u \pm \tfrac13 + \tfrac12)^2}) e((n+\ell u \pm \tfrac13+\tfrac12)(-3u + \tfrac12)) \\ & + 2\pi i \ell (\tfrac12-3u) \sum_{n\in \mathbb Z}(n + \ell u \pm \tfrac13 + \tfrac12) e({\tfrac32 \rho(n+\ell u \pm \tfrac13 + \tfrac12)^2}) e((n+\ell u \pm \tfrac13+\tfrac12)(-3u + \tfrac12))
\\&+ 6\pi i (\ell\rho-1)\sum_{n \in \mathbb Z} (n + \ell u \pm \tfrac13 + \tfrac12)^2 e({\tfrac32 \rho(n+\ell u \pm \tfrac13 + \tfrac12)^2}) e((n+\ell u \pm \tfrac13+\tfrac12)(-3u + \tfrac12)).
\end{align*}
Since we will take the derivative in $u$ at the point $u=\frac{\alpha}{\beta}$, it  suffices to assume $|u-\frac{\alpha}{\beta}|<\epsilon$ for some sufficiently small $\epsilon>0$ as before.   Hence, we have that 
 $\ell (u-\frac{\alpha}{\beta}) \pm \tfrac13  \in (-\frac12,\frac12)$, so that 
$$\frac{\partial }{\partial u} g_{\ell u \pm \frac13 + \frac12,-3u + \frac12}(3\rho) \ll_u |\rho|
e^{-3\pi \operatorname{Im}(\rho) \big(N+\ell u \pm \tfrac13 + \tfrac12\big)^2}
$$ for some fixed $N \in \mathbb Z$.    Thus, we may apply the Leibniz Rule for indefinite integrals to the derivative  (in \eqref{eqn_derivDtil}) \begin{align}\label{def_Dabtil} \sqrt{3} \sum_{\pm}  e\left(\mp\frac{1}{6}\right)  \left.\frac{d}{du} \int_{\frac{1}{\ell}}^{i\infty} \frac{g_{\ell u \pm \frac13 + \frac12,-3u + \frac12}(3\rho)}{\sqrt{-i(\rho +\tau)}}d\rho\ \right|_{u=\frac{\alpha}{\beta}},\end{align}  
 and that deduce $D_{\alpha,\beta}$ is analytic for $\tau \in \mathbb R \setminus \{-\frac{1}{\ell}\}$.   
\end{proof}

 \ \\

\end{document}